\documentclass[a4paper,12pt]{article}
\usepackage[margin=1in]{geometry}

\usepackage{amsthm,amsmath,amsfonts,amssymb} 
\usepackage[authoryear]{natbib} 
\usepackage[colorlinks=true,allcolors=blue]{hyperref} 
\usepackage{graphicx} 

\usepackage[T1]{fontenc} 
\usepackage[utf8]{inputenc} 
\usepackage[english]{babel} 

\usepackage{dsfont} 
\usepackage{braket} 
\usepackage{mathtools} 
\usepackage{mathrsfs} 
\usepackage{bm,bbm} 

\usepackage{microtype} 
\usepackage{multirow,array} 
\usepackage{enumitem} 
\usepackage{caption}
\usepackage{xcolor}
\usepackage{subcaption} 
\usepackage[ruled,noend]{algorithm2e} 
\usepackage[toc,page]{appendix}	
\usepackage{authblk} 

\newtheorem{theorem}{Theorem}[section]
\newtheorem{lemma}[theorem]{Lemma}
\newtheorem{proposition}[theorem]{Proposition}

\theoremstyle{definition}
\newtheorem{definition}[theorem]{Definition}
\newtheorem{example}[theorem]{Example}
\newtheorem{remark}[theorem]{Remark}

\usepackage{setspace}
\setlength{\parindent}{1.5em}
\onehalfspacing

\setlength{\tabcolsep}{8pt}

\def\eqd{\stackrel{\mbox{\scriptsize{d}}}{=}}
\def\simiid{\stackrel{\mbox{\scriptsize{\rm iid}}}{\sim}}

\providecommand{\ddr}{\mathrm{d}}
\providecommand{\crm}{\tilde \mu}
\providecommand{\CRM}{\textup{CRM}}
\providecommand{\id}{\textup{ID}}
\providecommand{\E}{\mathbb{E}}

\providecommand{\R}{\mathbb{R}}
\providecommand{\X}{\mathbb{X}}
\providecommand{\Prob}{\mathbb{P}}
\providecommand{\var}{\textup{Var}} 
\providecommand{\cov}{\textup{Cov}} 
\providecommand{\corr}{\textup{Corr}}
\newcommand{\cut}{\vspace{0pt}}

\begin{document}
	
\title{Hierarchical Random Measures without Tables}
\author[1]{Marta Catalano}
\author[2]{Claudio Del Sole}
\affil[1]{\emph{Luiss University, Rome, Italy}, \ mcatalano@luiss.it}
\affil[2]{\emph{University of Milano-Bicocca, Milan, Italy}, \ claudio.delsole@unimib.it}
\date{}
\renewcommand\Affilfont{\small}
\renewcommand\Authands{ and }

\maketitle
\vspace{-\baselineskip}
	
\begin{abstract}
	\noindent
	The hierarchical Dirichlet process is the cornerstone of Bayesian nonparametric multilevel models. Its generative model can be described through a set of latent variables, commonly referred to as \emph{tables} within the popular restaurant franchise metaphor. The latent tables simplify the expression of the posterior and allow for the implementation of Gibbs sampling algorithms to approximately draw posterior samples. However, managing their assignments can become computationally expensive, especially as the size of the dataset and the number of levels increase. In this work, we identify a prior for the concentration parameter of the hierarchical Dirichlet process that (i) induces a quasi-conjugate posterior distribution, and (ii) removes the need for tables, leading to more interpretable expressions for the posterior, with both a scalable and an exact algorithm to sample from it. Remarkably, this construction extends beyond the Dirichlet process, leading to a new framework for defining normalized hierarchical random measures and a new class of algorithms to sample from their posteriors. The key analytical tool is the independence of multivariate increments, that is, their representation as \emph{completely random vectors}.
	
	\bigskip
	\noindent
	\emph{Keywords}: Bayesian nonparametrics, completely random measure, Dirichlet process, multilevel model, partial exchangeability.
\end{abstract}

\section{Introduction}

Historically, Bayesian nonparametric and hierarchical models have addressed data complexity in different ways. Nonparametric models ensure full flexibility to the marginal distribution of the observations by increasing the dimensionality of the parameter space, while hierarchical models focus on the interactions between the observations, grouping them and modeling their dependencies through shared parameters or latent features. The interaction between parameters is recursively modeled in a similar fashion, defining a hierarchical structure that enables pooling of information across different groups while preserving their distinct characteristics -- all within the principled framework of Bayesian inference. The hierarchical Dirichlet process \citep{Teh2006} represented a significant breakthrough, demonstrating the advantages of combining the two approaches through the sharing of infinite-dimensional parameters. Since then, it has proved effective in an impressive number of contexts, including natural language processing \citep{Teh2006, Zavitsanos2011}, genomics \citep{Sohn2009, Elliott2019, Liu2024}, computer vision \citep{Sudderth2008, Haines2011}, music segmentation and speaker diarization \citep{Ren2008, Fox2011}, cognitive science \citep{Griffiths2007}, robotics \citep{Nakamura2011, Taniguchi2018}, network analysis \citep{Durante2025}.

The computational feasibility of the hierarchical Dirichlet process in \citet{Teh2006} is  strictly linked to a compelling posterior representation via latent variables, often referred to as \emph{tables} in the restaurant franchise metaphor. The need for tables arises from the nature of the infinite-dimensional parameter, which is an almost surely discrete random probability $\tilde P = \sum_{i \ge 1} J_i \delta_{\theta_i}$, characterized by a countably infinite number of jumps $J_i$ and atoms $\theta_i$. In a Bayesian nonparametric setting, a single group of observations is often modeled as conditionally independent and identically distributed from $\tilde P$, where $\tilde P$ is e.g.~a Dirichlet process \citep{Ferguson1973} with diffuse mean measure $P_0$. In this case, since the atoms $\theta_i$ are independent and identically distributed from $P_0$, two observations coincide if and only if they share the same atom $\theta_i$. Conversely, in the hierarchical Dirichlet process, the distribution $P_0$ of the atoms is itself a latent parameter, which is shared across different groups and likewise modeled as an almost surely discrete random probability. Hence, contrary to the exchangeable case, the atoms display ties with positive probability, and two observations coincide either if they share the same atom or if their respective atoms have identical values. This complicates the posterior representation, unless one keeps track of the atom associated to each observation, which is precisely the role of the latent tables \citep{Sankhya2024}. For $n$ observations, the tables induce a distribution on the space of partitions of $n$ elements; the dimensionality of this space increases dramatically with the number of observations, and one typically needs a Gibbs sampling algorithm to sample from it.

The most popular implementation for the hierarchical Dirichlet process is the Gibbs sampler based on the restaurant franchise metaphor \citep{Teh2006}, often addressed as \emph{marginal} or \emph{collapsed} Gibbs sampler. Despite the convenient expressions of its full conditionals and its remarkable flexibility, it is not an exact algorithm, since it samples from a Markov chain and thus approximates the posterior distribution only asymptotically. In addition, there are other well-known drawbacks: it involves a considerable amount of bookkeeping, and scales poorly as the number of observations $n$ increases, as it relies on a sequential updating scheme for the $n$ latent tables. Moreover, each table allocation depends on all the other allocations, inducing high autocorrelation in the Markov chain, slow mixing, and preventing
parallelization. Such limitations have been recognized by several works; see e.g.~\cite{Teh2006, Teh2010, Williamson2013, Rigon2020, Das2025}.
Accordingly, a plethora of sampling-based strategies have been proposed to reduce computational time and improve mixing and scalability of the standard implementation. These are typically Gibbs samplers adopting a \emph{conditional} or \emph{blocked} approach, that is, instantiating the random measures with some finite-dimensional approximation of the posterior. Direct assignment schemes instantiate the jumps of the common random probability, often addressed as \emph{global weights}, through the stick-breaking construction \citep{Teh2006} or a finite-dimensional Dirichlet approximation \citep{Fox2011}, while variational methods \citep{Teh2007, Wang2011, Bryant2012} construct mean field approximations of stick-breaking ratios at both levels of the hierarchy. Nevertheless, their full conditional distributions still depend on the nested partition induced by the tables, and thus do not completely avoid some of its inherent drawbacks.
Interestingly, the conditional Gibbs samplers proposed in \cite{Rigon2020} and \cite{Das2025} obviate the need for tables by considering a finite-dimensional approximation of the model, i.e.~by truncating the common random probability a priori.

In this work we pursue a different strategy that eliminates the need for tables while preserving the infinite-dimensionality of the model, thanks to a specific gamma hyperprior for the shared concentration parameter of the Dirichlet processes. Intuitively, the hierarchical Dirichlet process defines a vector of dependent random probabilities with independent jumps and common atoms with ties. Our hyperprior allows for an alternative representation as dependent random probabilities with dependent jumps and common atoms without ties, thereby making the tables superfluous. A fundamental result of this work shows that this representation is the normalization of a specific \emph{completely random vector} \citep{Catalano2021}, that is, a vector of dependent random measures with jointly independent increments. This is the natural multivariate extension of a completely random measure \citep{Kingman1967}, whose normalization is very popular in the Bayesian nonparametric literature \citep{Regazzini2003, James2006, James2009}. In particular, \cite{James2009} derive an almost conjugate representation of the posterior, conditionally on a real-valued latent variable $U$ amenable to standard approximate sampling schemes, such as random walk Metropolis-Hastings \citep{Barrios2013}. In this work, we extend the posterior representation to any normalized completely random vector, which in principle could be applied to many other well-established models, including the normalization of GM-dependent measures \citep{lijoi2014bayesian}, L\'evy copulas \citep{EpifaniLijoi2010}, compound random measures \citep{GriffinLeisen2017}, and thinned random measures \citep{LauCripps2022}. 

The characterization as a normalized completely random vector allows to identify a novel posterior representation for the hierarchical Dirichlet process with hyperprior that does not require the latent tables. 
The price to pay for casting off the tables is the introduction of a latent vector and $k$ jump vectors, where $k$ is the total number of distinct observations, typically much smaller than the number of observations $n$. In principle, the dimension of the latent and jump vectors coincides with the number of groups $d$, which could be potentially large, thus slowing down the posterior inference algorithm. However, through an in-depth analysis of their distributions, we reduce the non-standard sampling steps to sampling a random vector supported on $[0,\infty)^{2k+1}$. This approach offers three fundamental advantages: (i) the random vector lives in a standard space, and thus can be approximately sampled with standard Markov Chain Monte Carlo (MCMC) techniques, which show good mixing and can be analyzed with a well-established set of diagnostics; (ii) its dimension does not increase with the number of observations $n$ nor with the number of groups $d$, which leads to scalable algorithms whenever $n$ or $d$ increase but $k$ remains only moderately large; (iii) when $n$ is moderately large, we devise an exact i.i.d.~sampling algorithm, thus avoiding the approximation error of Gibbs sampling procedures. In summary, our novel posterior representation allows for the development of a scalable algorithm when $n$ or $d$ are large compared to $k$, and of an exact i.i.d.~sampling algorithm when $n$ is only moderately large. Note that $k$ consistently smaller than $n$ is the most common setting where the model should be applied, since, e.g., for the hierarchical Dirichlet process, $k$ is of the same order as $\log(\log(n))$.

The hierarchical Dirichlet process with hyperprior is only a particular case of the more general class of nonparametric hierarchical models discussed in this work. These models arise as the normalization of a vector $(\crm_1,\dots, \crm_d)$ of conditionally independent completely random measures, given another completely random measure $\crm_0$. 
We show that these dependent random measures are completely random vectors and, for this reason, we term them  \emph{hierarchical completely random vectors}. Such hierarchical structures have been used to model dependent hazards \citep{Camerlenghi2021, DelSole2026} 
and beta processes \citep{ThibauxJordan2007, Masoero2018, James2024}, but have never been considered for normalization. This 
novel specification merges two compelling properties for the first time: the naturalness of the hierarchical construction and the analytical tractability of independent multivariate increments. The former leads to simple representations and draws interesting parallels with popular hierarchical models in the literature 
\citep{Teh2006, Camerlenghi2018}. The latter enhances the theoretical investigation of the model, 
leading to the almost-conjugate posterior representation we have already discussed, 
and playing a crucial role in deriving closed-form expressions for the prior moments and dependence structure. 

The work is structured as follows. 
Section~\ref{sec:hcrv} introduces and characterizes hierarchical completely random vectors (hCRVs). Properties of their normalization are studied in Section~\ref{sec:normalization}, where we clarify the connections with other hierarchical models in the literature. Section~\ref{sec:posterior} provides a characterization of the posterior distribution for normalized hCRVs. These results are specialized to the gamma-gamma model in Section~\ref{sec:gamma-gamma}, where we derive our new posterior sampling algorithms and perform relevant comparisons in terms of computational complexity on simulated datasets. Finally, Section~\ref{sec:comparison} sets rules for a fair comparison with the hierarchical Dirichlet process. Using both simulated and real datasets, we argue that a meaningful comparison between 
dependent priors should entail matching the first two marginal moments and measure of dependence. Proofs and technical details for the sampling algorithms are deferred to the Supplementary Material. 

\paragraph{Notation} 
The Cartesian product of $d$ copies of a set A is denoted as $A^d = A \times \dots \times A$. The product measure of $d$ probabilities $P_1,\dots,P_d$ is $\prod_{i=1}^d P_i$, or $P^d$ if $P_1 = \dots = P_d = P$.
Bold symbols indicate vectors, such as $\bm{s} = (s_1,\dots,s_d)$, and $\ddr \bm{s} = \ddr s_1 \cdot \dots \cdot \ddr s_d$ is the Lebesgue measure on $\R^d$. The normal distribution with mean $\mu$ and variance $\sigma^2$ is denoted by $N(\mu,\sigma^2)$.
If $f : \mathbb{X} \to \mathbb{Y}$ is a measurable map and $\rho$ is a measure on $\mathbb{X}$, 
then $f_{\#} \rho$ is the pushforward measure on $\mathbb{Y}$ defined as $(f_{\#} \rho)(B) = \rho(f^{-1}(B))$, for any measurable set $B$. The symbol $\sim$ underlines the randomness of a random measure ($\crm$). 
The abbreviation a.s.~stands for \emph{almost surely}; we repeatedly use $\Omega_d = [0,+\infty)^d \setminus \{\bm{0}\}$. Contents in the Supplementary Material 
are labeled with an ‘S’ prefix, e.g. Section S1.


\section{Hierarchical completely random vectors}
\label{sec:hcrv}

In this section, we recall the definition of hierarchical random measures, highlight that they are homogeneous completely random vectors, and recover both their multivariate Laplace exponent and their L\'evy measure. The proofs follow a structure similar to the \emph{subordination} of L\'evy processes, first defined in \cite{Bochner1955}, and beautifully described in  \cite{Bertoin1996} and \cite{Sato1999}. Their joint L\'evy measure displays some similarities with that of compound random measures \citep{GriffinLeisen2017}, but we show that there is no intersection between the two classes, at least for the same choice of the subordinating random measure. We note that hierarchical completely random vectors can be approximately sampled with standard techniques, exploiting their hierarchical structure.
The specification of their law involves an outer and an inner L\'evy measure, whose identifiability is studied with relevant examples.

A vector of random measures $\bm{\crm} = (\crm_1,\dots,\crm_d)$ is a measurable function on $M_{\mathbb{X}}^d$, where $M_{\mathbb{X}}$ denotes the space of boundedly finite measures on a Polish space $\mathbb{X}$. A completely random vector \citep{Catalano2021} is the natural multivariate generalization of a completely random measure (CRM), defined in \citet{Kingman1967}. For a Borel set $A$ of $\X$, we use the notation $\bm{\crm}(A) = (\crm_1(A),\dots,\crm_d(A))$, which is a random vector in $\R^d$.

\begin{definition}
	\label{def:crv}
	A vector of random measures $\bm{\crm} = (\crm_1,\dots,\crm_d)$ is a completely random vector ({\rm CRV}) if, given pairwise disjoint Borel sets $A_1,\dots,A_k$ of $\X$, the random vectors $\bm{\crm}(A_1),\dots,\bm{\crm}(A_k)$ are mutually independent. 
\end{definition}

\noindent
Refer to Section~\ref{app:background} for a brief and self-contained account on completely random measures, L\'evy measures, L\'evy intensities, Laplace exponents, and their multivariate extension to completely random vectors. Henceforth, $\crm \sim \CRM(\rho\otimes P_0)$ denotes a CRM with product L\'evy intensity $\ddr \rho (s) \, \ddr P_0(x)$, and $\id(\rho)$ indicates a pure-jump infinitely divisible distribution with L\'evy measure $\rho$; its expression in integrals is $\ddr P_{\scriptscriptstyle \id(\rho)}$ and its probability density function (p.d.f), if it exists, is denoted by $f_{\scriptscriptstyle \id( \rho)}(s)$.

\begin{definition} \label{def:hcrv}
	Let $\rho_0$ and $\rho$ be L\'evy measures on $(0,+\infty)$ and let $P_0$ be an atomless measure. We say that $\bm{\crm}= (\crm_1,\dots,\crm_d)\sim {\rm hCRV}(\rho, \rho_0, P_0)$ is a hierarchical {\rm CRV} with idiosyncratic L\'evy measure $\rho$, base L\'evy measure $\rho_0$, and base measure $P_0$ if
	\begin{equation*}
		\crm_1,\dots,\crm_d \mid \crm_0 \simiid \CRM(\rho \otimes \crm_0); \qquad \crm_0 \sim \CRM(\rho_0 \otimes P_0).
	\end{equation*}
\end{definition}

The next theorem shows that the vector of random measures $\bm{\crm} \sim \mathrm{hCRV}(\rho,\rho_0,P_0)$ is in fact a completely random vector in the sense of Definition~\ref{def:crv}. Moreover, the theorem provides the expression of its multivariate Laplace functional through the Laplace exponent and determines its multivariate L\'evy intensity; since $\bm{\crm}$ is a CRV, such intensity automatically satisfies the integrability conditions (i) and (ii) recalled in Section~\ref{app:background}. This construction admits a natural interpretation in terms of subordination. Indeed, the proof follows the same structure as the classical subordination of L\'evy processes \citep{Bochner1955,Bertoin1996,Sato1999}, and hierarchical CRVs may be regarded as the corresponding random-measure analogue. A related construction, based on the subordination of CRMs by infinitely divisible random measures, is also developed in \cite{Brueck2026}.

\begin{theorem} 
	\label{th:crv}
	Let $\bm{\crm} \sim {\rm hCRV}(\rho, \rho_0, P_0)$ with $P_0$ a probability measure. Then $\bm{\crm}$ is a homogeneous \emph{CRV} with Laplace exponent $\psi_h:[0,+\infty)^d \mapsto [0,+\infty)$ and L\'evy intensity $\nu_{h} = \rho_{h} \otimes P_0$ such that, for every $\bm{\lambda} = (\lambda_1,\dots,\lambda_d) \in [0,+\infty)^d$ and $\bm{s} = (s_1,\dots,s_d)\in \Omega_d$,
	\begin{equation*}
		\psi_{h}(\bm{\lambda}) = \psi_0\bigg(\sum_{i=1}^d \psi(\lambda_i) \bigg), \qquad \ddr \rho_{h}({\bm s}) = \int_0^{+\infty} \prod_{i=1}^d  \ddr P_{\scriptscriptstyle \id(t \rho)}( s_i)  \, \ddr\rho_0(  t).
	\end{equation*}
\end{theorem}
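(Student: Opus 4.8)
The plan is to compute the joint Laplace functional of $\bm{\crm}$ directly, exploiting the hierarchical structure exactly as in the subordination of L\'evy processes: first condition on $\crm_0$, then integrate it out. Fix measurable $f_1,\dots,f_d\colon\X\to[0,+\infty)$ and recall from Appendix~\ref{app:background} that a CRM $\crm\sim\CRM(\rho\otimes P_0)$ has Laplace functional $\E[\exp(-\int_\X f\,\ddr\crm)]=\exp(-\int_\X\psi(f(x))\,\ddr P_0(x))$, where $\psi$ is the Laplace exponent of $\rho$; write $\psi_0$ for that of $\rho_0$. Conditionally on $\crm_0$ the measures $\crm_1,\dots,\crm_d$ are independent $\CRM(\rho\otimes\crm_0)$, so
\begin{equation*}
    \E\!\left[\exp\!\Big(-\sum_{i=1}^d\int_\X f_i\,\ddr\crm_i\Big)\,\Big|\,\crm_0\right]=\exp\!\Big(-\int_\X\sum_{i=1}^d\psi(f_i(x))\,\ddr\crm_0(x)\Big),
\end{equation*}
and, taking expectations and applying the Laplace functional of $\crm_0\sim\CRM(\rho_0\otimes P_0)$ to the nonnegative measurable integrand $g(x)=\sum_{i}\psi(f_i(x))$, the joint Laplace functional equals $\exp\big(-\int_\X\psi_0(\sum_i\psi(f_i(x)))\,\ddr P_0(x)\big)$.

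Next I would read off the conclusion. This has the form $\exp(-\int_\X\psi_h(f_1(x),\dots,f_d(x))\,\ddr P_0(x))$ with $\psi_h(\bm{\lambda})=\psi_0(\sum_i\psi(\lambda_i))$; choosing the $f_i$ supported on pairwise disjoint Borel sets makes the functional factorize, yielding the mutual independence in Definition~\ref{def:crv}, while the single product measure $P_0$ makes $\bm{\crm}$ homogeneous with base measure $P_0$ and Laplace exponent $\psi_h$. To recover the L\'evy intensity, note that for each $t>0$ the measure $t\rho$ is a L\'evy measure with Laplace exponent $t\psi$, so $\id(t\rho)$ has Laplace transform $e^{-t\psi(\lambda)}=\int_0^{\infty}e^{-\lambda s}\,\ddr P_{\scriptscriptstyle\id(t\rho)}(s)$. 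Substituting $\psi_0(u)=\int_0^{\infty}(1-e^{-ut})\,\ddr\rho_0(t)$, writing $e^{-t\sum_i\psi(\lambda_i)}=\prod_i e^{-t\psi(\lambda_i)}$ as the Laplace transform of the probability measure $\prod_i P_{\scriptscriptstyle\id(t\rho)}$, and applying Tonelli together with the fact that the integrand vanishes at $\bm{s}=\bm{0}$, one gets
\begin{equation*}
    \psi_h(\bm{\lambda})=\int_{\Omega_d}\big(1-e^{-\langle\bm{\lambda},\bm{s}\rangle}\big)\,\ddr\rho_h(\bm{s}),\qquad \ddr\rho_h(\bm{s})=\int_0^{+\infty}\prod_{i=1}^d\ddr P_{\scriptscriptstyle\id(t\rho)}(s_i)\,\ddr\rho_0(t).
\end{equation*}
Since $\psi_h(\bm{1})=\psi_0(d\,\psi(1))<+\infty$ and $1-e^{-\langle\bm{1},\bm{s}\rangle}$ is bounded below by a multiple of $1\wedge\|\bm{s}\|$, the measure $\rho_h$ meets the integrability requirement of a L\'evy measure on $\Omega_d$; hence $\psi_h$ is a genuine Laplace exponent and, by the converse multivariate L\'evy--Khintchine correspondence of Appendix~\ref{app:background}, $\bm{\crm}$ is the homogeneous CRV with L\'evy intensity $\nu_h=\rho_h\otimes P_0$.

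I expect the displayed manipulations to be routine; the points needing care are (i) justifying the iterated-expectation identity — measurability of $x\mapsto\psi(f_i(x))$ follows from continuity of $\psi$, and nonnegativity makes the inner exponential bounded, so no integrability obstruction arises; (ii) the identification of $e^{-t\psi(\lambda)}$ as the Laplace transform of $\id(t\rho)$, which uses that scaling a L\'evy measure scales its exponent; and (iii) invoking the correct direction of the multivariate L\'evy--Khintchine theorem to translate the shape of the Laplace functional back into the CRV property and its homogeneous L\'evy intensity. The only genuinely new content is that the subordination relation between the Laplace exponents survives the passage to L\'evy measures, which is precisely the Tonelli computation above.
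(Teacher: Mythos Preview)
Your proposal is correct and follows essentially the same subordination-style route as the paper: condition on $\crm_0$ to obtain $\psi_h(\bm{\lambda})=\psi_0\big(\sum_i\psi(\lambda_i)\big)$, then recover $\rho_h$ by writing $e^{-t\psi(\lambda_i)}$ as the Laplace transform of $\id(t\rho)$ and applying Tonelli. The only cosmetic difference is that the paper separates out the independence argument on disjoint sets (its Step~1) before computing the Laplace transform, whereas you compute the full Laplace functional once and then specialize; your phrase ``$f_i$ supported on pairwise disjoint Borel sets'' should really read ``each $f_i$ a step function on a common family of pairwise disjoint sets $A_1,\dots,A_k$'', but the factorization you need follows exactly as you intend.
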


\noindent
In particular, if $\id(t \rho)$ has p.d.f.~$f_{ \scriptscriptstyle \id(t \rho)}$, then $\rho_{h}$ has a L\'evy density
\begin{equation*}
	\rho_{h}(s_1,\dots,s_d) = \int_0^{+\infty} \prod_{i=1}^d f_{\id(t \rho)}(s_i) \, \ddr \rho_0(t).
\end{equation*}
A sufficient condition for $\id(t \rho)$ to have a probability density function, for every $t > 0$, is that the L\'evy measure $\rho$ has infinite mass and is absolutely continuous \cite[Theorem 27.7]{Sato1999}. 

\begin{remark}
	\label{rem:compound}
	The structure of the L\'evy density above resembles that of \emph{compound random measures} introduced in \cite{GriffinLeisen2017},  \cut
	\begin{equation*}
		\rho_{\rm co}(s_1,\dots,s_d) = \int_0^{+\infty} \frac{1}{t^d} \, H\left( \frac{s_1}{t},\dots,\frac{s_d}{t} \right) \ddr \rho_0(t),
	\end{equation*}
	where $H$ is a p.d.f.~on $[0,+\infty)^d$.
	However, we show in Section~\ref{proof_remark} that there is no intersection between the two classes, at least for the same choice of 
	base L\'evy measure $\rho_0$.
\end{remark}

\begin{remark} \label{rmk:sampling}
	Sampling from $\bm{\crm} \sim {\rm hCRV}(\rho, \rho_0, P_0)$ can be easily performed exploiting its hierarchical structure. Firstly, the base random measure $\crm_0$ can be sampled with different standard techniques; notably, when $\rho_0$ has infinite mass, $\crm_0$ comprises a countably infinite number of jumps, and we may only obtain an approximate sample.
	The most common strategies for infinitely active CRMs are based on the Ferguson-Klass representation \citep{Ferguson1972}, and sequentially sample their jumps in decreasing order, up to a certain truncation level $L$, yielding the discrete approximation 
	\begin{equation*}
		\tilde \mu_0^{\mathrm{approx}} = \sum_{\ell=1}^L \omega_{0\ell}\,\delta_{\phi_\ell},
	\end{equation*}
	with each $\phi_\ell \sim P_0$ independently.
	The practical implementations of such algorithms rely on the inversion of the tail of the L\'evy measure \citep{Wolpert1998,walker2000} or rejection sampling from a dominating L\'evy density \citep{Rosinski2001};
	we refer to \cite{Campbell2019,Zhang2024} for further details and alternative sampling techniques. Secondly, exploiting this approximation for $\crm_0$, the L\'evy intensity of each $\crm_i \mid \crm_0^\mathrm{approx}$ can be decomposed as $\nu_i = \sum_{\ell = 1}^L \omega_{0\ell}\rho \, \delta_{\phi_\ell}$.
	Since the sum of L\'evy measures corresponds to a sum of independent CRMs, and a CRM $\crm$ with atomic base measure $P_0 = \delta_{\phi}$ satisfies $\crm = \crm(\X) \delta_{\phi}$, then
	$$ \crm_i \eqd \sum_{\ell=1}^L \omega_{i\ell}\,\delta_{\phi_\ell},$$
	where each $\omega_{i\ell} \sim {\rm ID}(\omega_{0\ell}\rho)$ independently. In other words, conditionally on a discrete approximation of the measure $\crm_0$, the vector $\bm{\crm}$ can be sampled exactly, provided that an exact simulation strategy for the infinitely divisible distribution ${\rm ID}(t\rho)$ for $t > 0$ is available. 
	A detailed description of our sampling algorithms for the gamma-gamma hCRV is given in Sections~\ref{app:sampling_random_measure} and~\ref{app:expint},
	tailored to posterior sampling thanks to conditional conjugacy (Proposition~\ref{th:posterior_hcrv}).
\end{remark}

The next result studies the identifiability of the parameters of a hierarchical CRV. For $c \in \mathbb{R}$, denote by $c_{\#}$ the pushforward measure of the multiplication map $s \mapsto cs$.

\begin{theorem}
	\label{th:identifiability}
	Let $\bm{\crm}^{(\ell)} \sim {\rm hCRV}(\rho^{(\ell)}, \rho_0^{(\ell)}, P_0^{(\ell)})$, for $\ell=1,2$. Then $\bm{\crm}^{(1)} = \bm{\crm}^{(2)}$ in distribution if and only if there exists $c>0$ such that
	\begin{equation*}
		\rho_0^{(2)} = (c^{-1})_{\#} \rho_0^{(1)} ,\qquad  \rho^{(2)} = c \rho^{(1)}, \qquad P_0^{(1)} = P_0^{(2)}.
	\end{equation*}
	In particular, if $\rho^{(\ell)}$ and $\rho_0^{(\ell)}$ have L\'evy densities, for $\ell=1,2$, this is equivalent to
	\begin{equation*}
		\rho_0^{(2)}(s) = c \rho_0^{(1)} (cs),\qquad  \rho^{(2)}(s) = c \rho^{(1)} (s), \qquad P_0^{(1)} = P_0^{(2)}.
	\end{equation*}
\end{theorem}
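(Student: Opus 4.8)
The plan is to translate the distributional identity into functional identities among Laplace exponents and then read off the constant $c$ from a Cauchy-type functional equation. Throughout I use the facts recalled in Appendix~\ref{app:background} that the law of a homogeneous CRV is determined by its L\'evy intensity and that a (uni- or multivariate) L\'evy measure is determined by its Laplace exponent.

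\emph{Step 1 (reduction).} Since $P_0^{(1)}, P_0^{(2)}$ are probabilities, the factorisation $\nu_h = \rho_h \otimes P_0$ from Theorem~\ref{th:crv} is unique: testing $\nu_h$ on rectangles $A \times \X$ with $0 < \rho_h(A) < \infty$ recovers $\rho_h$, and then testing on $A \times B$ recovers $P_0$. Hence $\bm{\crm}^{(1)} \eqd \bm{\crm}^{(2)}$ holds iff $P_0^{(1)} = P_0^{(2)}$ and $\rho_h^{(1)} = \rho_h^{(2)}$, the latter being equivalent, through $\psi_h(\bm\lambda) = \psi_0(\sum_i \psi(\lambda_i))$, to
\begin{equation*}
\psi_0^{(1)}\Big(\textstyle\sum_{i=1}^{d} \psi^{(1)}(\lambda_i)\Big) = \psi_0^{(2)}\Big(\textstyle\sum_{i=1}^{d} \psi^{(2)}(\lambda_i)\Big) \qquad \text{for all } \bm\lambda \in \Omega_d. \tag{$\ast$}
\end{equation*}
The ``if'' direction is then immediate: $\rho^{(2)} = c\rho^{(1)}$ gives $\psi^{(2)} = c\,\psi^{(1)}$ and $\rho_0^{(2)} = (c^{-1})_{\#}\rho_0^{(1)}$ gives $\psi_0^{(2)}(\cdot) = \psi_0^{(1)}(\cdot/c)$, so the two rescalings cancel in $(\ast)$; with $P_0^{(1)} = P_0^{(2)}$ this yields equality in law. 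The rest of the plan is the converse, assuming as we may that $\rho, \rho_0 \not\equiv 0$ (else $\bm{\crm} \equiv \bm{0}$ and there is nothing to identify).

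\emph{Step 2 (idiosyncratic part).} Write $g_i = \psi_0^{(i)}$ and $h_i = \psi^{(i)}$; each is continuous, vanishes at $0$, and is strictly increasing, its derivative being $\int t\,e^{-ut}\,\rho_0^{(i)}(\ddr t) > 0$, resp.\ $\int s\,e^{-\lambda s}\,\rho^{(i)}(\ddr s) > 0$. Thus $h_i$ maps $[0,\infty)$ homeomorphically onto $[0, M_i)$, where $M_i = \rho^{(i)}((0,\infty)) \in (0, +\infty]$, and one may set $\phi = h_1 \circ h_2^{-1} \colon [0, M_2) \to [0, M_1)$, continuous, increasing, $\phi(0) = 0$. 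Taking all but one coordinate of $\bm\lambda$ equal to $0$ in $(\ast)$ and substituting $\lambda = h_2^{-1}(u)$ gives $g_2 = g_1 \circ \phi$ on $[0, M_2)$; taking exactly two coordinates nonzero (this is where $d \ge 2$ is needed) and substituting $u = h_2(\lambda)$, $v = h_2(\mu)$ gives $g_1(\phi(u) + \phi(v)) = g_2(u+v) = g_1(\phi(u+v))$ whenever $u + v < M_2$, whence, by injectivity of $g_1$, $\phi(u) + \phi(v) = \phi(u+v)$ on the triangle $\{u, v \ge 0 : u + v < M_2\}$. A continuous solution with $\phi(0) = 0$ is linear, $\phi(u) = \kappa u$ with $\kappa > 0$, so $\psi^{(1)} = \kappa\,\psi^{(2)}$; comparing $\int (1 - e^{-\lambda s})\,\rho^{(1)}(\ddr s) = \kappa \int (1 - e^{-\lambda s})\,\rho^{(2)}(\ddr s)$ and using uniqueness of the L\'evy measure gives $\rho^{(2)} = c\,\rho^{(1)}$ with $c = 1/\kappa$.

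\emph{Step 3 (base part and conclusion).} From $g_2 = g_1 \circ \phi$ one gets $\psi_0^{(2)}(u) = \psi_0^{(1)}(u/c)$ on $[0, M_2)$. Both sides extend holomorphically to $\{\mathrm{Re}\,u > 0\}$ (differentiate the defining integral under the integral sign, using $\int \min(1, t)\,\rho_0^{(i)}(\ddr t) < \infty$), so agreement on the nonempty interval $(0, M_2)$ propagates to $(0, \infty)$ by the identity theorem and to $[0, \infty)$ by continuity. Rewriting $\psi_0^{(1)}(u/c) = \int_0^{\infty} (1 - e^{-us})\,\big((c^{-1})_{\#}\rho_0^{(1)}\big)(\ddr s)$ and invoking uniqueness of the L\'evy measure once more yields $\rho_0^{(2)} = (c^{-1})_{\#}\rho_0^{(1)}$; together with $P_0^{(1)} = P_0^{(2)}$ from Step~1 this is the claim, and the density reformulation follows from the Jacobian of $s \mapsto c^{-1} s$. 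The main obstacle is Step~2 — extracting linearity of $\phi$ from $(\ast)$ — together with the care required in the finite-activity regime $M_i < \infty$, where the functional equations live only on bounded sets and the analytic-continuation argument of Step~3 becomes essential. It is worth noting that $d \ge 2$ cannot be dropped: for $d = 1$, composing an $\alpha$- and a $\beta$-stable subordinator yields an $\alpha\beta$-stable law for every admissible pair with $\alpha\beta$ fixed, which violates the scaling relations.
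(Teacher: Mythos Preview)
Your proof is correct and follows essentially the same route as the paper's: reduce equality in law to equality of the composite Laplace exponents $\psi_0(\sum_i \psi(\lambda_i))$, extract a Cauchy-type additivity for the composition $\phi = \psi^{(1)} \circ (\psi^{(2)})^{-1}$, and conclude linearity. Your version is in fact more careful than the paper's on one point: the paper tacitly writes $f,g:(0,\infty)\to(0,\infty)$, which presumes the ranges of $\psi^{(2)}$ and $\psi_0^{(1)}$ are all of $(0,\infty)$, whereas you correctly restrict to $[0,M_2)$ in the finite-activity case and then close the gap via analytic continuation of $\psi_0^{(i)}$ to $\{\mathrm{Re}\,u>0\}$; your closing remark that the result fails for $d=1$ (stable--stable compositions) is also a nice addition not present in the paper.
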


\noindent
The last condition can be easily checked on specific classes of models; for example, if we restrict to $\rho_0 = \alpha \rho$ for some $\alpha>0$, then $\rho$ and $P_0$ are identifiable. To clarify the interpretation of the identifiability conditions, note that $\rho_0^{(2)}(s) = c \rho_0^{(1)} (cs)$ if and only if $\crm_0^{(1)} = c \crm_0^{(2)}$.

We conclude this section with two leading examples of hierarchical CRVs.

\begin{example} 
	\label{ex:gamma-gamma}
	We term $\bm{\tilde \mu}$ a gamma-gamma hierarchical CRV if there exist shape parameters $\alpha, \alpha_0>0$, rate parameters $b,b_0>0$, and $P_0$ a base measure such that
	\begin{equation*}
		\crm_1,\dots, \crm_d \mid \crm_0 \simiid \CRM\bigg(\alpha \frac{e^{-bs}}{s} \ddr s \otimes \crm_0\bigg); \qquad \crm_0 \sim \CRM\bigg(\alpha_0 \frac{e^{-b_0 s}}{s} \ddr s \otimes P_0 \bigg).
	\end{equation*}
	By applying Theorem~\ref{th:crv} and the expression of the $1$-dimensional Laplace exponent in Definition~\ref{def:gamma}, the multivariate Laplace exponent of $\bm{\tilde \mu}$, for $P_0$ 
	a probability measure, is
	\begin{equation*}
		\psi_h(\lambda_1,\dots,\lambda_d) = \alpha_0 \log\bigg( 1+ \frac{\alpha}{b_0} \sum_{i=1}^d \log \bigg( 1 + \frac{\lambda_i}{b} \bigg) \bigg).
	\end{equation*}
	Therefore, different gamma-gamma hCRVs coincide in distribution if they have the same ratio $\alpha/b_0$. Moreover, since $t \rho(s) = t\alpha\, s^{-1} e^{-bs}$ is the L\'evy density of a gamma CRM with shape parameter $t \alpha$ and scale parameter $b$, by Theorem~\ref{th:crv} the multivariate L\'evy density of $\bm{\tilde \mu}$ is 
	\begin{equation*}
		\rho_h(s_1,\dots,s_d) = \alpha_0 \, e^{-b \sum_{i=1}^d s_i} \int_0^{+\infty} \frac{b^{ dt}}{\Gamma(t)^d} \prod_{i=1}^d s_i^{t-1} \, \frac{e^{-(b_0/\alpha)\,t}}{t} \, \ddr t.
	\end{equation*}
\end{example}

\begin{example} 
	\label{ex:stable-stable}
	We term $\bm{\tilde \mu}$ a stable-stable hierarchical CRV if there exist shape parameters $\alpha, \alpha_0>0$, discount parameters $\sigma,\sigma_0 \in (0,1)$, and a base measure $P_0$ such that
	\begin{align*}
		\crm_1,\dots, \crm_d \mid \crm_0 & \simiid \CRM\bigg(\frac{\alpha\,\sigma}{\Gamma(1-\sigma)} \frac{1}{s^{1+\sigma}} \ddr s \otimes \crm_0\bigg); \\
		\crm_0 & \sim \CRM\bigg(\frac{\alpha_0\sigma_0}{\Gamma(1-\sigma_0)}\frac{1}{s^{1+\sigma_0}}  \ddr s \otimes P_0 \bigg).
	\end{align*}
	Theorem~\ref{th:crv} and
	Definition~\ref{def:stable} imply that the multivariate Laplace exponent of $\bm{\tilde \mu}$ is
	\begin{equation*}
		\psi_h(\lambda_1,\dots,\lambda_d) = \alpha_0 \, \alpha^{\sigma_0} \, (\lambda_1^\sigma + \dots + \lambda_d^\sigma)^{\sigma_0}.
	\end{equation*}
	Therefore, different stable-stable hCRVs coincide in distribution if they have the same value for $\alpha_0 \, \alpha^{\sigma_0}$. For $d=1$, we retrieve the Laplace functional of the marginal $\crm_i$, namely $\psi(\lambda) = \alpha_0 \, \alpha^{\sigma_0} \, \lambda^{\sigma \sigma_0}$, which is the Laplace exponent of a stable CRM with shape $\alpha_0\, \alpha^{\sigma_0}$ and discount parameter $\sigma \sigma_0$. Remarkably, we recover the well-known fact that the subordination of a stable L\'evy process with a stable process is again a stable process \citep{Bertoin1996, Sato1999, Camerlenghi2018}. An explicit expression for the multivariate L\'evy density is available only for $\sigma = 1/2$, as it requires the density of the stable infinitely divisible distribution. In such case, the L\'evy density $t\rho(s) = t\alpha\, (4\pi)^{-1/2} s^{-3/2} $ is that of a L\'evy distribution, and 
	\begin{align*}
		\rho_h(s_1,\dots,s_d) = \frac{\alpha_0 \, \alpha^{\sigma_0} \sigma_0}{\pi^{d/2}\, 2^{\sigma_0+1}} \frac{\Gamma( \frac{d-\sigma_0}{2})}{\Gamma(1-\sigma_0)} \prod_{i=1}^ds_i^{-3/2} \bigg( \sum_{i=1}^d \frac{1}{s_i} \bigg)^{-\frac{d-\sigma_0}{2}}.
	\end{align*}
	
\end{example}


\section{Normalized hierarchical completely random vectors}
\label{sec:normalization}

One of the most common uses of completely random measures in Bayesian statistics is their normalization \citep{Regazzini2003}, which defines random probabilities whose law can act as nonparametric priors. The same construction can be extended to vectors of dependent random measures, such as hierarchical CRVs. In this section, we provide conditions for the normalization to be well-defined and investigate connections with popular models, such as the hierarchical Dirichlet process \citep{Teh2006, Camerlenghi2019} and the hierarchical normalized $\sigma$-stable process \citep{Camerlenghi2019}. Moreover, we discuss general techniques to measure the dependence of normalized hierarchical CRVs.

For ${\bm \crm} \sim {\rm hCRV}(\rho, \rho_0,P_0)$, we derive a vector of dependent random probabilities as
\begin{equation}
	\label{eq:norm}
	\frac{\bm{\crm}}{\bm{\crm}(\X)} := \bigg(\frac{\crm_1}{\crm_1(\X)}, \dots , \frac{\crm_d}{\crm_d(\X)}\bigg) , 
\end{equation}
which is well-defined if $0<\crm_i<+\infty$ a.s., for $i=1,\dots,d$. The upper bound forces $P_0$ to be a finite measure; thus, we can assume without loss of generality that $P_0$ is a probability measure. The lower bound forces each $\crm_i$ to be infinitely active, that is, the corresponding L\'evy measures to have infinite mass. 

\begin{lemma}
	\label{th:infinite_activity}
	Let $\bm{\crm} \sim {\rm hCRV}(\rho, \rho_0, P_0)$. Then each $\crm_i$ is infinitely active if and only if
	\begin{equation*}
		\int_0^{+\infty} \ddr \rho_0 (t) = \int_0^{+\infty} \ddr \rho (t)  = +\infty.
	\end{equation*}
\end{lemma}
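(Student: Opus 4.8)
The plan is to read off infinite activity from the univariate marginal of the vector, which Proposition~\ref{th:crv} describes in closed form. Since $\crm_1,\dots,\crm_d$ are conditionally i.i.d.\ given $\crm_0$, they share a common marginal law, so it suffices to decide when $\crm_1$ is infinitely active; and $\crm_1$ on its own is a hierarchical CRV with $d=1$. Recalling that in the normalization setting $P_0$ is a probability measure, Proposition~\ref{th:crv} applies with $d=1$ and tells us that $\crm_1$ is a homogeneous CRM whose Lévy measure on $(0,+\infty)$ equals $\rho_h(\ddr s)=\int_0^{+\infty}\ddr P_{\scriptscriptstyle\id(t\rho)}(s)\,\ddr\rho_0(t)$ (restricted away from the origin). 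As infinite activity of a CRM is equivalent to its Lévy measure having infinite total mass, the whole lemma reduces to deciding when $\rho_h\big((0,+\infty)\big)=+\infty$.

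I would then compute that mass. By Tonelli and the fact that $\id(t\rho)$ is supported on $[0,+\infty)$, $\rho_h\big((0,+\infty)\big)=\int_0^{+\infty}\big(1-P_{\scriptscriptstyle\id(t\rho)}(\{0\})\big)\,\ddr\rho_0(t)$. The atom of $\id(t\rho)$ at the origin is recovered from its Laplace transform: $P_{\scriptscriptstyle\id(t\rho)}(\{0\})=\lim_{\lambda\to+\infty}e^{-t\psi(\lambda)}=e^{-t\,\rho((0,+\infty))}$, since $\psi(\lambda)=\int_0^{+\infty}(1-e^{-\lambda s})\,\ddr\rho(s)$ increases to $\rho((0,+\infty))$ as $\lambda\to+\infty$. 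Hence $\rho_h\big((0,+\infty)\big)=\int_0^{+\infty}\big(1-e^{-t\,\rho((0,+\infty))}\big)\,\ddr\rho_0(t)$, and it remains to split on whether $\rho((0,+\infty))$ is finite.

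If $\int_0^{+\infty}\ddr\rho=+\infty$ the integrand is identically $1$, so $\rho_h\big((0,+\infty)\big)=\int_0^{+\infty}\ddr\rho_0$, which is infinite iff $\int_0^{+\infty}\ddr\rho_0=+\infty$; together with the case hypothesis this is exactly the asserted condition, giving both directions here. If instead $m:=\int_0^{+\infty}\ddr\rho<+\infty$, then from $1-e^{-tm}\le (tm)\wedge 1\le (1\vee m)\,(t\wedge 1)$ and the defining integrability $\int_0^{+\infty}(s\wedge 1)\,\ddr\rho_0(s)<+\infty$ of a Lévy measure one gets $\rho_h\big((0,+\infty)\big)<+\infty$, so $\crm_1$ is not infinitely active, consistently with the right-hand side failing as well. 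I expect this finite-$\rho$ case to be the only genuinely non-mechanical point: one must notice that composing \emph{any} $\rho_0$, even of infinite mass, with a finite-mass $\rho$ always produces a finite marginal Lévy mass, and this is precisely where the integrability built into the notion of a Lévy measure is used; the rest is Tonelli plus bookkeeping with Proposition~\ref{th:crv}. A more probabilistic alternative would track the atoms of $\crm_1$ directly: each atom of $\crm_0$ at $x_j$ with mass $J_j^0$ contributes an atom to $\crm_1$ with conditional probability $1-e^{-J_j^0\rho((0,+\infty))}$, and a Borel--Cantelli argument combined with $\crm_0(\X)<+\infty$ a.s.\ yields the same dichotomy.
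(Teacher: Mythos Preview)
Your proof is correct and follows essentially the same approach as the paper: extract the marginal L\'evy measure from Proposition~\ref{th:crv}, compute its total mass as $\int_0^{+\infty}\big(1-e^{-t\,\rho((0,+\infty))}\big)\,\ddr\rho_0(t)$ via the atom of $\id(t\rho)$ at zero, and split on whether $\rho$ has finite mass. The only cosmetic difference is in the finite-$\rho$ case, where the paper recognizes the integral directly as the Laplace exponent $\psi_0(a)$ (automatically finite), whereas you use the elementary bound $1-e^{-tm}\le(1\vee m)(t\wedge1)$ together with L\'evy integrability; both arguments are valid and yield the same conclusion.
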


\noindent Therefore, $\crm_i$ is infinitely active if and only if both $\crm_0$ and $\crm_i \mid \crm_0$ are infinitely active.

\begin{remark} 
	\label{rem:fubini-tonelli} The main subtlety of the proof of Lemma~\ref{th:infinite_activity} is that $\rho$ has a finite mass if and only if $\id(t \rho)$ gives positive probability to $\{0\}$, as 
	shown in \cite{Regazzini2003}; in this case,
	\begin{equation*}
		\int_{\Omega_d} \ddr P_{\scriptscriptstyle \id(t \rho)}^d < \int_{[0,+\infty)^d} \ddr P_{\scriptscriptstyle \id(t \rho)}^d = 1.
	\end{equation*}
	Hence, we need $\rho$ to have infinite mass to conclude that $\int_{\Omega_d} \ddr \rho_{h}(\bm{s}) = \int_0^{+\infty} \ddr \rho_0(t)$ by Theorem~\ref{th:crv} and Fubini-Tonelli theorem.
\end{remark}

The construction in \eqref{eq:norm} is similar to the normalized hierarchical model in \cite{Camerlenghi2019, Sankhya2024}, where, however, the base random measure is normalized as well, that is
\begin{equation}
	\label{def:herarchical_structure}
	\crm_1,\dots,\crm_d \mid \crm_0 \simiid \CRM\bigg(\rho \otimes \frac{\crm_0}{\crm_0(\X)} \bigg); \qquad \crm_0 \sim \CRM(\rho_0 \otimes P_0).
\end{equation} 
This slight modification has crucial implications on the overall law of $\bm{\crm}$, which is no longer a CRV, and whose marginals $\crm_i$'s are not CRMs. Interestingly, at least two popular hierarchical specifications can be expressed in terms of a normalized hierarchical CRV, as discussed in the following.

Recall that $\bm{\tilde P} = (\tilde P_1,\dots,\tilde P_d) \sim {\rm HDP}(\alpha, \alpha_0,P_0)$ is a hierarchical Dirichlet process \citep{Teh2006} with concentration parameters $\alpha, \alpha_0>0$ and base probability $P_0$ if
\begin{equation}
	\label{def:hdp}
	\tilde P_1,\dots,\tilde P_d \mid \tilde P_0 \simiid {\rm DP}( \alpha \tilde P_0); \qquad \tilde P_0 \sim {\rm DP}(\alpha_0 P_0),
\end{equation}
where ${\rm DP}(\alpha_0 P_0)$ denotes a Dirichlet process \citep{Ferguson1973} with base measure $\alpha_0 P_0$. In fact, the normalization of the gamma-gamma hCRV in Example~\ref{ex:gamma-gamma} recovers the HDP with a specific gamma prior on the concentration parameter. Here, ${\rm Gamma}(a,b)$ denotes the gamma distribution with shape $a$ and rate $b$. 

\begin{proposition} 
	\label{th:hdp_hyper}
	For parameters $\alpha,\alpha_0>0$ and $b, b_0 > 0$, and base measure $P_0$, let $\bm{\crm}$ and $\bm{\tilde P}$ be vectors of random measures such that 
	\begin{gather*} 
		\crm_1,\dots, \crm_d \mid \crm_0 \simiid {\rm CRM}\bigg( \alpha  \frac{e^{-bs}}{s} \ddr s \otimes \crm_0 \bigg); \qquad \crm_0 \sim {\rm CRM}\bigg( \alpha_0  \frac{e^{-b_0 s}}{s} \ddr s \otimes P_0 \bigg), \\
		\tilde P_1,\dots,\tilde P_d \mid \tilde \alpha \simiid {\rm HDP}(\tilde \alpha, \alpha_0, P_0); \qquad \tilde \alpha \sim {\rm Gamma}(\alpha_0, b_0/\alpha). 
	\end{gather*}
	Then, with the notation in \eqref{eq:norm}, it holds that $\bm{\crm}/\bm{\crm}(\X) \eqd \bm{\tilde P}$.
\end{proposition}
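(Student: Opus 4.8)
The plan is to peel off the hierarchy one layer at a time, relying on the classical fact that a gamma completely random measure normalizes to a Dirichlet process, independently of its total mass: if $\crm \sim \CRM(c\, s^{-1}e^{-bs}\,\ddr s \otimes Q)$ with $Q$ a probability measure, then $\crm(\X) \sim {\rm Gamma}(c,b)$, $\crm/\crm(\X) \sim {\rm DP}(cQ)$, and the two are independent \citep{Ferguson1973, Regazzini2003}. I would note first that the normalization in \eqref{eq:norm} is well defined in this case: since $\int_0^{+\infty} s^{-1}e^{-bs}\,\ddr s = \int_0^{+\infty} s^{-1}e^{-b_0 s}\,\ddr s = +\infty$, Lemma~\ref{th:infinite_activity} yields $0<\crm_i(\X)<+\infty$ a.s. for every $i$.

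First I would apply the above fact to the base measure $\crm_0 \sim \CRM(\alpha_0\, s^{-1}e^{-b_0 s}\,\ddr s \otimes P_0)$, obtaining $\crm_0(\X) \sim {\rm Gamma}(\alpha_0,b_0)$ and, independently of it, $\tilde P_0 := \crm_0/\crm_0(\X) \sim {\rm DP}(\alpha_0 P_0)$. Setting $\tilde \alpha := \alpha\,\crm_0(\X)$, the scaling property of the gamma law gives $\tilde\alpha \sim {\rm Gamma}(\alpha_0, b_0/\alpha)$, with $\tilde\alpha$ independent of $\tilde P_0$ because $\crm_0(\X)$ is. Then I would condition on $\crm_0$ and apply the same fact to each $\crm_i$: rewriting the conditional L\'evy intensity as $\alpha\, s^{-1}e^{-bs}\,\ddr s \otimes \crm_0 = \tilde\alpha\, s^{-1}e^{-bs}\,\ddr s \otimes \tilde P_0$ exhibits $\crm_i \mid \crm_0$ as a gamma CRM with mass parameter $\tilde\alpha$ and base probability $\tilde P_0$, so $\crm_i/\crm_i(\X) \mid \crm_0 \sim {\rm DP}(\tilde\alpha\tilde P_0)$. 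Since the $\crm_i$ are conditionally i.i.d. given $\crm_0$ and $\crm_i/\crm_i(\X)$ is a measurable function of $\crm_i$, the normalized measures are conditionally i.i.d. given $\crm_0$ as well.

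Collecting the pieces: $\tilde P_0 \sim {\rm DP}(\alpha_0 P_0)$ and $\tilde\alpha \sim {\rm Gamma}(\alpha_0, b_0/\alpha)$ are independent, and $\crm_1/\crm_1(\X),\dots,\crm_d/\crm_d(\X) \mid (\tilde\alpha,\tilde P_0) \simiid {\rm DP}(\tilde\alpha\tilde P_0)$, which by \eqref{def:hdp} is precisely the generative model of $\bm{\tilde P}$; hence $\bm{\crm}/\bm{\crm}(\X) \eqd \bm{\tilde P}$. I expect the only delicate point to be the bookkeeping of conditional laws: one must check that conditioning on $\crm_0$ is the same as conditioning on $(\tilde\alpha,\tilde P_0)$ — which holds because $\crm_0 = (\tilde\alpha/\alpha)\,\tilde P_0$ — and that the normalization–independence statement applies verbatim when the base measure is the random, purely atomic measure $\crm_0$ rather than a diffuse one; the latter is fine because the classical result holds for an arbitrary finite base measure, atomic or not, and this is exactly why the resulting $\tilde P_i$ exhibit ties, as in the HDP.
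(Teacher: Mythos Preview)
Your proposal is correct and follows essentially the same route as the paper's proof: normalize the inner gamma CRMs conditionally on $\crm_0$ to get ${\rm DP}(\alpha\crm_0)={\rm DP}(\alpha\crm_0(\X)\,\tilde P_0)$, then use the independence of total mass and normalization for the base gamma CRM to identify $\tilde\alpha=\alpha\crm_0(\X)\sim{\rm Gamma}(\alpha_0,b_0/\alpha)$ independent of $\tilde P_0\sim{\rm DP}(\alpha_0 P_0)$. Your added remarks on well-definedness via Lemma~\ref{th:infinite_activity} and on the validity of the normalization result for atomic base measures are sound and make the argument a bit more self-contained than the paper's version.
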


\noindent
The distribution of a normalized gamma-gamma hCRV depends in fact only on $\alpha_0$ and $\alpha/b_0$. The role of the ratio $\alpha/b_0$ for identifiability is highlighted in Example~\ref{ex:gamma-gamma}, while $b$ is a scale parameter for $\bm{\crm}$ and disappears with normalization; see also Figure~\ref{fig:structure} and Section~\ref{app:normalized_jumps}. In practice, one may restrict to $b = b_0 = 1$, without loss of generality.

Additionally, when the idiosyncratic component is a stable CRM, the normalized hierarchical model of \cite{Camerlenghi2019} can be expressed as a normalized hierarchical CRV. This result generalizes, with a different technique, a result of \cite{Camerlenghi2018} which assumes the base CRM to be stable as well; this same fact is also observed for L\'evy processes in \cite{Bertoin1996}. The key property is that, for a stable CRM $\crm$ with L\'evy measure $\rho$, any proportional measure $c \crm$ with $c>0$ is a stable CRM with proportional L\'evy measure $ c' \rho$, for some $c'>0$.

\begin{proposition} 
	\label{th:stable}
	Let $\bm{\crm} ^{(1)}$ and $\bm{\crm}^{(2)}$ be two vectors of random measures defined by 
	\begin{align*} 
		\crm^{(1)}_1,\dots, \crm^{(1)}_d \mid \crm_0 &\simiid {\rm CRM}\bigg( \frac{\alpha \,\sigma}{\Gamma(1-\sigma)} \frac{1}{s^{\sigma+1}}  \ddr s \otimes \crm_0 \bigg); \\
		\crm^{(2)}_1,\dots,\crm^{(2)}_d \mid \crm_0 &\simiid {\rm CRM}\bigg(\frac{\alpha \,\sigma}{\Gamma(1-\sigma)} \frac{1}{s^{\sigma+1}}  \ddr s \otimes \frac{\crm_0}{\crm_0(\X)} \bigg),
	\end{align*}
	where $\alpha>0$, $\sigma \in (0,1)$, and $\crm_0$ is an infinitely active {\rm CRM}. Then, 
	\begin{equation*}
		\frac{\bm{\crm}^{(1)}}{\bm{\crm}^{(1)}(\X)} \eqd \frac{\bm{\crm}^{(2)}}{\bm{\crm}^{(2)}(\X)}.
	\end{equation*} 
\end{proposition}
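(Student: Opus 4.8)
The plan is to condition on the realization of $\crm_0$ and exploit the scaling (self-similarity) property of the stable CRM recalled just before the statement. Since $\crm_0$ is infinitely active it is a.s.\ nonzero, and we additionally use $\crm_0(\X)<+\infty$ a.s., as is needed for $\crm_0/\crm_0(\X)$ to be well defined; set $T:=\crm_0(\X)\in(0,+\infty)$ and $\bar\mu_0:=\crm_0/T$. Conditionally on $\crm_0$, each $\crm^{(1)}_i\sim\CRM(\rho_\sigma\otimes T\bar\mu_0)$ and each $\crm^{(2)}_i\sim\CRM(\rho_\sigma\otimes\bar\mu_0)$, where $\rho_\sigma(\ddr s)=\frac{\alpha\sigma}{\Gamma(1-\sigma)}s^{-1-\sigma}\,\ddr s$. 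Using $\int_0^{+\infty}(1-e^{-\lambda s})\,\rho_\sigma(\ddr s)=\alpha\lambda^{\sigma}$, the conditional Laplace functionals are, for bounded measurable $f\ge 0$,
\begin{align*}
    \E\big[e^{-\int f\,\ddr\crm^{(1)}_i}\mid\crm_0\big]&=\exp\Big(-\alpha T\!\int_\X f^{\sigma}\,\ddr\bar\mu_0\Big),\\
    \E\big[e^{-\int f\,\ddr\crm^{(2)}_i}\mid\crm_0\big]&=\exp\Big(-\alpha\!\int_\X f^{\sigma}\,\ddr\bar\mu_0\Big).
\end{align*}
Replacing $f$ by $T^{1/\sigma}f$ in the second identity reproduces the first, so conditionally on $\crm_0$ we get $\crm^{(1)}_i\eqd T^{1/\sigma}\crm^{(2)}_i$ as random measures; this realizes the statement that scaling the base measure of a stable CRM by $T$ equals, in law, scaling its jumps by $T^{1/\sigma}$.

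Second, I would lift this to the full vectors. Conditionally on $\crm_0$ the measures $\crm^{(1)}_1,\dots,\crm^{(1)}_d$ are independent (indeed i.i.d.), and likewise $\crm^{(2)}_1,\dots,\crm^{(2)}_d$; moreover $T=\crm_0(\X)$ is $\crm_0$-measurable, hence a deterministic constant after conditioning. Therefore the conditional law of $(\crm^{(1)}_1,\dots,\crm^{(1)}_d)$ given $\crm_0$ coincides with that of $(T^{1/\sigma}\crm^{(2)}_1,\dots,T^{1/\sigma}\crm^{(2)}_d)$ given $\crm_0$, with the \emph{same} scalar $T^{1/\sigma}$ appearing in every coordinate.

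Finally, I would apply the coordinatewise normalization $(\nu_1,\dots,\nu_d)\mapsto(\nu_1/\nu_1(\X),\dots,\nu_d/\nu_d(\X))$, which is a.s.\ well defined because each $\crm^{(i)}_j(\X)\in(0,+\infty)$: strict positivity follows from $\crm_0\neq 0$ and the infinite mass of $\rho_\sigma$, while finiteness follows from $\int_0^{+\infty}(1\wedge s)\,\rho_\sigma(\ddr s)<+\infty$ for $\sigma\in(0,1)$ together with $\crm_0(\X)<+\infty$. The positive constant $T^{1/\sigma}$ cancels in each coordinate of the normalized $\bm{\crm}^{(2)}$, so the conditional law of $\bm{\crm}^{(1)}/\bm{\crm}^{(1)}(\X)$ given $\crm_0$ equals that of $\bm{\crm}^{(2)}/\bm{\crm}^{(2)}(\X)$ given $\crm_0$; integrating out $\crm_0$ yields the claim. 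The only genuinely delicate step is the conditional scaling identity of the first paragraph—this is exactly where the stable form of the idiosyncratic L\'evy measure is essential, since for a non-stable kernel the factor $T^{1/\sigma}$ would not appear and the cancellation would fail; everything afterwards is bookkeeping.
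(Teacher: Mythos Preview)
Your proposal is correct and follows essentially the same approach as the paper: both condition on $\crm_0$, establish the conditional scaling identity $\crm^{(1)}_i \eqd \crm_0(\X)^{1/\sigma}\,\crm^{(2)}_i$ via the stable self-similarity, and then cancel the common scalar upon normalization. The only cosmetic difference is that you verify the scaling through Laplace functionals while the paper works directly with the L\'evy intensities; your version is somewhat more detailed on the bookkeeping (well-definedness of the normalization, lifting to the vector, integrating out $\crm_0$), but the argument is the same.
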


\begin{remark}
	There is a strong connection between hCRVs and the hierarchical models in \eqref{def:herarchical_structure}, even beyond the two specifications discussed in Proposition~\ref{th:hdp_hyper} and Proposition~\ref{th:stable}. Indeed, conditionally on $\crm_0$, a hierarchical CRV satisfies
	\begin{equation*}
		\crm_1,\dots,\crm_d \mid \crm_0 \simiid \CRM(\rho \otimes \crm_0 ) \equiv \CRM \bigg(\crm_0(\X)\, \rho \otimes \frac{\crm_0}{\crm_0(\X)} \bigg);
	\end{equation*}
	therefore, a hierarchical CRV can be regarded as a hierarchical model in \eqref{def:herarchical_structure} whose idiosyncratic L\'evy measure is endowed with a hyperprior on the concentration parameter, that is, 
	\begin{equation*}
		\crm_1,\dots,\crm_d \mid \crm_0 \simiid  \CRM(\tilde \alpha \rho \otimes \tilde P_0 ), \qquad (\tilde \alpha, \tilde P_0) = \bigg(\crm_0(\X), \, \frac{\crm_0}{\crm_0(\X)}\bigg).
	\end{equation*}
	The subtlety of this hyperprior with respect to standard proposals is that, in general, it is dependent on the base measure $\tilde P_0$, since $\crm_0(\X)$ and $\crm_0/\crm_0(\X)$ are dependent. The only exception is the gamma CRM, which leads to Proposition~\ref{th:hdp_hyper}.
\end{remark}

In the following, we provide closed form expression for summary statistics encoding the mean, the variance, and the dependence structure of hCRVs. The simplest and most widely used measure of dependence between random probabilities $\bm \tilde P = \bm \crm/\bm \crm(\X)$ is the pairwise linear correlation $\corr({\tilde P_i}(A),{\tilde P_j}(A))$, for a Borel set $A$. Note that its expression does not depend on $A$, since normalized hCRVs are normalized homogeneous CRVs and thus belong to the class of multivariate species sampling processes \citep{Franzolini2025}. At the level of the random measures $\bm \crm$, its computation is a slight modification of \citet[Section~8]{Sankhya2024}, where the expressions are derived by leveraging on the conditional independence structure. 
We report these results in Proposition~\ref{th:moments}, and present an alternative proof that builds on their joint infinite divisibility. Interestingly, this second technique greatly simplifies the derivation in the normalized case.

\begin{proposition}
	\label{th:moments_norm}
	Let $\bm{\crm}\sim {\rm hCRV}(\rho, \rho_0, P_0)$, and let $\psi$ and $\psi_0$ denote the Laplace exponents of $\rho$ and $\rho_0$, respectively. For any Borel set $A$ and every $i \neq j$, the normalization $\bm{\tilde P} = \bm{\crm}/\bm{\crm}(\X)$ satisfies  $\E(\tilde P_i(A)) = P_0(A)$ and
	\begin{align*}
		\var(\tilde P_i(A)) & = - P_0(A)(1-P_0(A)) \int_0^{+\infty} u\,e^{-\psi_0(\psi(u))} (\psi_0 \circ \psi)'' (u) \, \ddr u, \\
		\cov(\tilde P_i(A), \tilde P_j(A)) & = \var \left(\frac{\crm_0}{\crm_0(\X)} \right) = - P_0(A)(1-P_0(A)) \int_0^{+\infty} u\, e^{-\psi_0(u)} \psi_0'' (u) \, \ddr u,
	\end{align*}
	where $(\psi_0 \circ \psi)''(u) = \psi_0''(\psi(u)) \, \psi'(u)^2 + \psi_0'(\psi(u)) \, \psi''(u)$. Moreover, if $P_0(A) \notin \set{0,1}$, 
	\begin{equation*}
		\corr(\tilde P_i(A), \tilde P_j(A)) = \frac{\int_0^{+\infty} u\,e^{-\psi_0(u)} \psi_0'' (u) \, \ddr u}{\int_0^{+\infty} u\,e^{-\psi_0(\psi(u))} (\psi_0 \circ \psi)'' (u) \, \ddr u}.
	\end{equation*}
\end{proposition}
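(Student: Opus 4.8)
Throughout write $p = P_0(A) \in (0,1)$ and $\Psi := \psi_0 \circ \psi$. The plan is to exploit the jointly infinitely divisible structure of $\bm{\crm}$ provided by Theorem~\ref{th:crv} together with the elementary identity $T^{-n} = \Gamma(n)^{-1}\int_0^{+\infty} u^{n-1}e^{-uT}\,\ddr u$, which converts the ratios defining $\tilde P_i(A)$ into integrals of Laplace transforms of $\bm{\crm}$. Two consequences of Theorem~\ref{th:crv} will be used repeatedly. First, since $A$ and $A^c$ are disjoint, $\bm{\crm}(A)$ and $\bm{\crm}(A^c)$ are independent random vectors, so any Laplace transform involving $\crm_i(\X) = \crm_i(A) + \crm_i(A^c)$ factorizes over $A$ and $A^c$. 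Second, by homogeneity, $E[e^{-u\crm_i(A)}] = e^{-p\Psi(u)}$ and, for $i \ne j$, $E[e^{-u\crm_i(A) - v\crm_j(A)}] = e^{-p\,g(u,v)}$ with $g(u,v) := \psi_0(\psi(u)+\psi(v))$ (the remaining coordinates of $\bm{\lambda}$ set to $0$, so the corresponding $\psi(0)$ terms vanish). Finally, since the normalization is well-defined, Lemma~\ref{th:infinite_activity} forces $\rho,\rho_0$ to have infinite mass, hence $\psi(0)=\psi_0(0)=0$ and $\psi(+\infty)=\psi_0(+\infty)=+\infty$; this is what makes the boundary terms below vanish.

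For the mean and variance I would apply the identity with $n=1,2$ to $\tilde P_i(A)^n = \crm_i(A)^n/\crm_i(\X)^n$, move the expectation inside by Tonelli (nonnegative integrand), factor over $A$ and $A^c$, and use $E[\crm_i(A)^n e^{-u\crm_i(A)}] = (-1)^n\,\tfrac{\ddr^n}{\ddr u^n}e^{-p\Psi(u)}$. After multiplying in the $A^c$-factor $e^{-(1-p)\Psi(u)}$ this gives $E[\tilde P_i(A)] = p\int_0^{+\infty}\Psi'(u)e^{-\Psi(u)}\,\ddr u = p$ and $E[\tilde P_i(A)^2] = \int_0^{+\infty} u\,(p^2\Psi'(u)^2 - p\,\Psi''(u))\,e^{-\Psi(u)}\,\ddr u$. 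A single integration by parts finishes the variance: from $\Psi''e^{-\Psi} = \Psi'^2 e^{-\Psi} + \tfrac{\ddr}{\ddr u}(\Psi'e^{-\Psi})$ and $[u\Psi'e^{-\Psi}]_0^{+\infty}=0$ one gets $\int_0^{+\infty} u\Psi'^2 e^{-\Psi} = \int_0^{+\infty} u\Psi'' e^{-\Psi} + 1$, so $E[\tilde P_i(A)^2] = p^2 - p(1-p)\int_0^{+\infty} u\Psi''(u)e^{-\Psi(u)}\,\ddr u$; subtracting $p^2$ and expanding $\Psi'' = (\psi_0\circ\psi)''$ by the chain rule yields the claim.

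For the covariance I would use the two-variable identity $\crm_i(\X)^{-1}\crm_j(\X)^{-1} = \int_0^{+\infty}\!\!\int_0^{+\infty} e^{-u\crm_i(\X)-v\crm_j(\X)}\,\ddr u\,\ddr v$, so that $E[\tilde P_i(A)\tilde P_j(A)] = \int\!\!\int E[\crm_i(A)\crm_j(A)e^{-u\crm_i(\X)-v\crm_j(\X)}]\,\ddr u\,\ddr v$. Factoring over $A$ and $A^c$, the $A$-part is $\partial_u\partial_v e^{-p\,g} = (p^2 g_u g_v - p\,g_{uv})e^{-p\,g}$ and the $A^c$-part is $e^{-(1-p)g}$, so the integrand collapses to $(p^2 g_u g_v - p\,g_{uv})e^{-g}$. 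The crucial step is the change of variables $x=\psi(u)$, $y=\psi(v)$: since $g_u = \psi_0'(x+y)\psi'(u)$, $g_v = \psi_0'(x+y)\psi'(v)$, $g_{uv} = \psi_0''(x+y)\psi'(u)\psi'(v)$ and $\psi'(u)\,\ddr u = \ddr x$, $\psi'(v)\,\ddr v = \ddr y$, the integral becomes $\int_0^{+\infty}\!\!\int_0^{+\infty} (p^2\psi_0'(x+y)^2 - p\,\psi_0''(x+y))e^{-\psi_0(x+y)}\,\ddr x\,\ddr y$, with both variables ranging over $(0,+\infty)$ because $\psi(0)=0$ and $\psi(+\infty)=+\infty$. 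Collapsing onto $w=x+y$ via $\int_0^{+\infty}\!\!\int_0^{+\infty} F(x+y)\,\ddr x\,\ddr y = \int_0^{+\infty} wF(w)\,\ddr w$ gives $E[\tilde P_i(A)\tilde P_j(A)] = \int_0^{+\infty} w\,(p^2\psi_0'(w)^2 - p\,\psi_0''(w))e^{-\psi_0(w)}\,\ddr w$, which is exactly the second-moment integral above with $\Psi$ replaced by $\psi_0$; the same integration by parts then yields $E[\tilde P_i(A)\tilde P_j(A)] = p^2 - p(1-p)\int_0^{+\infty} w\psi_0''(w)e^{-\psi_0(w)}\,\ddr w$. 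Subtracting $p^2$ gives the covariance, and comparing with the variance formula applied to $\crm_0$ (whose $\crm_0(A)$ has Laplace exponent $p\psi_0$) identifies it with $\var(\crm_0(A)/\crm_0(\X))$. The correlation is then the ratio of the two, the factors $P_0(A)(1-P_0(A))$ and the minus signs cancelling.

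The remaining points are routine but need care: justifying Tonelli and differentiation of the Laplace transforms under the integral sign (standard, the latter by analyticity of Laplace transforms on the positive half-line), and checking that boundary terms such as $u\Psi'(u)e^{-\Psi(u)}$ and $v\,g_v(u,v)e^{-g(u,v)}$ vanish as the variable tends to $+\infty$ — which follows from concavity of Laplace exponents ($\Psi'$, $\psi_0'$ nonincreasing), $\Psi(+\infty)=\psi_0(+\infty)=+\infty$, and $xe^{-x}\to 0$. The one genuinely substantive idea is the change of variables $x=\psi(u)$, $y=\psi(v)$ followed by $w=x+y$, which makes the normalized covariance collapse onto the base-measure variance; this is precisely where the jointly infinitely divisible representation of Theorem~\ref{th:crv} does the work, bypassing the conditioning-on-$\crm_0$ argument used in the alternative proof of Proposition~\ref{th:moments}. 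I expect this change of variables to be the only step requiring genuine thought; everything else is bookkeeping.
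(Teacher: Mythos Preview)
Your argument is correct, but it takes a different route from the paper for the covariance. For the mean and variance you do essentially what the paper does: recognize that each $\crm_i$ is a homogeneous CRM with Laplace exponent $\Psi=\psi_0\circ\psi$ and invoke (or, as you do, re-derive) the James (2006) formulas for normalized CRMs. For the covariance, however, the paper bypasses all computation by using the hierarchical structure: conditional independence of $\tilde P_i,\tilde P_j$ given $\crm_0$ plus the law of total covariance give in one line
\[
\cov(\tilde P_i(A),\tilde P_j(A)) = \var\!\big(E(\tilde P_i(A)\mid\crm_0)\big) = \var\!\left(\frac{\crm_0(A)}{\crm_0(\X)}\right),
\]
after which the James formula applied to $\crm_0$ yields the integral in $\psi_0$. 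Your approach stays entirely on the CRV side: you compute the mixed second moment from the bivariate Laplace exponent $g(u,v)=\psi_0(\psi(u)+\psi(v))$ and then make the change of variables $x=\psi(u)$, $y=\psi(v)$, which collapses the double integral onto the $\psi_0$-only expression. This is a nice demonstration that the hierarchy need not be invoked at all---the multivariate Laplace exponent carries the same information---and the change of variables is the genuinely interesting step you identify. The paper's route is shorter and more conceptual for the covariance; yours is more self-contained and uniform across all three moments.

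One small tightening: your justification that $u\Psi'(u)e^{-\Psi(u)}\to 0$ is a bit loose. The clean argument is that concavity of any Laplace exponent $\Psi$ with $\Psi(0)=0$ gives $u\Psi'(u)\le\Psi(u)$, whence $u\Psi'(u)e^{-\Psi(u)}\le\Psi(u)e^{-\Psi(u)}\to 0$ because $\Psi(u)\to+\infty$ under infinite activity. This closes the boundary terms without appealing to monotonicity of $\Psi'$ separately.
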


\begin{example} 
	\label{ex:moments_gamma}
	Let $\bm{\crm}$ be a gamma-gamma hCRV as in Example~\ref{ex:gamma-gamma} with $b = b_0 = 1$, and let $A$ be such that $P_0(A) \notin \set{0,1}$. The marginal and mixed moments of $\bm{\crm}$ and $\bm{\tilde P} = \bm{\crm}/\bm{\crm}(\X)$ are the following. 
	
	\noindent
	\textbf{Moments of the unnormalized measures}
	\begin{gather*}
		\E\big(\tilde\mu_i(A)\big) = \alpha_0 \alpha\, P_0(A),
		\qquad \var\big(\tilde\mu_i(A)\big) = \alpha_0 \alpha(1+\alpha)\,P_0(A), \\
		\cov\big(\tilde\mu_i(A),\tilde\mu_j(A)\big) = \alpha_0 \alpha^2\,P_0(A),\\ 
		\corr\big(\tilde\mu_i(A),\tilde\mu_j(A)\big) = \frac{\alpha}{1+\alpha}.
	\end{gather*}
	
	\noindent
	\textbf{Moments of the normalized random probabilities}
	\begin{gather*}
		\E\big(\tilde P_i(A)\big) = P_0(A), 
		\qquad \var\big(\tilde P_i(A)\big) = \left(1 + \frac{\alpha_0}{\alpha} e^{1/\alpha} E_{\alpha_0}\!\left(\tfrac1\alpha\right)\right) \frac{P_0(A)(1-P_0(A))}{1+\alpha_0}, \\
		\cov\big(\tilde P_i(A),\tilde P_j(A)\big)= \frac{P_0(A)(1-P_0(A))}{1+\alpha_0}, \\
		\corr\big(\tilde P_i(A),\tilde P_j(A)\big) = \left(1 + \frac{\alpha_0}{\alpha} e^{1/\alpha} E_{\alpha_0}\!\left(\tfrac1\alpha\right)\right)^{-1},
	\end{gather*}
	where $E_\eta(x) = \int_1^{+\infty} t^{-\eta} e^{-t x} \ddr t = x^{\eta-1} \Gamma(1-\eta,x)$ is the generalized exponential integral.
\end{example}


\section{Posterior representation for normalized hCRVs}
\label{sec:posterior}

Vectors of dependent random probability measures are commonly employed in Bayesian statistics to model partially exchangeable observations. Indeed, any infinitely active CRV is suitable for this scope through normalization \eqref{eq:norm}. Many models in the literature fall within this framework, including GM-dependent measures \citep{lijoi2014class, lijoi2014bayesian}, compound random measures \citep{GriffinLeisen2017}, L\'evy copulas \citep{EpifaniLijoi2010}, and thinned random measures \citep{LauCripps2022}. In this section, we derive the expression of the posterior distribution for a generic normalized CRV; this can be seen as the multivariate extension of \cite{James2009} and a special case of FuRBI random measures \citep{ascolani2024} with shared atoms. These results are later specialized to hierarchical normalized CRVs, and their posterior structure is explored in greater detail.

Let $\bm{X}_i = (X_{i1}, \dots, X_{in_i})$ be the $i$-th group of observations, for $i=1,\dots,d$, 
and consider the model 
\begin{equation}
	\label{eq:model}
	\bm{X}_1, \dots, \bm{X}_d \mid \bm{\crm} \sim \bigg(\frac{\crm_1}{\crm_1(\X)}\bigg)^{n_1} \times \cdots \times \bigg(\frac{\crm_d}{\crm_d(\X)}\bigg)^{n_d}; \qquad \bm{\crm} \sim \text{CRV}(\nu),
\end{equation}
where $P^m$ denotes the $m$-fold product measure and $\ddr \nu(\bm{s}, x) = \ddr \rho_x(\bm{s}) \, \ddr P_0(x)$ is a multivariate L\'evy intensity with $P_0$ a diffuse probability. 
In the following, we often use the compact 
notation $\bm{X}_{1:d} = (\bm{X}_1,\dots, \bm{X}_d)$.

The almost-sure discreteness and the dependence structure of the random measures $\bm{\crm}$ imply that the observations $\bm{X}_{1:d}$ display tied values with positive probability, both within and across groups.
Let $\bm{x}^* = (x_1^*, \dots, x_k^*)$ be the distinct values taken by the observed values $\bm{x}_{1:d}$, that is, for every $j=1,\dots,k$, there exist $x_{ih}$ such that $x_{ih} = x_j^*$, and $x_j^* \neq x_{\ell}^*$ for every $j \neq \ell$. Moreover, for every $i = 1, \dots, d$ and $j=1,\dots,k$, denote by $n_{ij}$ the number of observed values in $\bm{x}_i$ equal to $x_j^*$. This implies that, for every $i=1,\dots,d$, the decomposition $n_i = n_{i1} + \cdots + n_{ik}$ holds.
Finally, for each $\bm{m}\in \mathbb{N}^d$, $x \in \mathbb{X}$ and $\bm{u} \in \Omega_d$, define the cumulant
\begin{equation} 
	\label{eq:cumulants}
	\tau_{\bm{m} \mid x}(\bm{u}) = \int_{\Omega_d} e^{-\bm{u} \cdot \bm{s}} \prod_{i=1}^d s_i^{m_i} \ddr \rho_x(\bm{s}).
\end{equation}

The next theorem provides a general expression for the posterior of $\bm{\crm} \mid \bm{X}_{1:d}$, showing that it preserves the CRV property, conditionally on a set of dependent latent variables. For this purpose, define a vector of latent variables $\bm{U} = (U_1,\dots, U_d)$ with joint p.d.f. 
\begin{equation}
	\label{eq:latent}
	f_{\scriptscriptstyle \bm{U}}(\bm{u}) \propto e^{- \psi(\bm{u})} \prod_{i=1}^d u_i^{n_i-1} \prod_{j=1}^k \tau_{n_{1j},\dots, n_{dj}\mid x_j^*}(\bm{u}),
\end{equation}
where $\psi$ denotes the Laplace exponent of $\bm{\crm}$.

\begin{theorem}
	\label{th:posterior_crv}
	Let $\bm{X}_1,\dots,\bm{X}_d$ follow the model \eqref{eq:model} with $\ddr \nu(\bm{s}, x) = \ddr \rho_x(\bm{s}) \, \ddr P_0(x)$, where $P_0$ is a diffuse probability and $\rho_x$ is an infinitely active L\'evy measure $P_0$-a.s. Then, there exist $\bm{U}$ with p.d.f. \eqref{eq:latent} such that 
	\begin{equation*}
		\mathcal{L}(\bm{\tilde{\mu}} \mid \bm{X}_{1:d} = \bm{x}_{1:d}) = \mathcal{L}\bigg( \bm{\tilde{\mu}}^*+\sum_{j=1}^{k} \bm{J}_j \delta_{x_j^*} \bigg),
	\end{equation*}
	where $\bm{\tilde{\mu}}^*$ and $(\bm{J}_1,\dots,\bm{J}_k)$ are conditionally independent given $\bm{U}$, and such that
	\begin{enumerate}
		\item[(i)] the vector $\bm{\tilde{\mu}}^* \mid \bm{U}$ is a {\rm CRV} with L\'evy intensity $\ddr \nu^*_{\scriptscriptstyle \bm{U}}(\bm{s}, x)=e^{-\bm{U} \cdot \bm{s}}  \, \ddr \nu (\bm{s}, x)$;
		\item[(ii)] for $j=1,\dots,k$, the vector $\bm{J}_j = (J_{1j},\dots, J_{dj})$ contains the jumps at the shared fixed point of discontinuity $x_j^*$, whose conditional distribution satisfies
		\begin{equation}
			\label{eq:jumps}
			\ddr P_{ \scriptscriptstyle \bm{J}_j \mid \bm{U}}(\bm{s}) \propto e^{-\bm{U} \cdot \bm{s}} \prod_{i=1}^d s_i^{n_{ij}} \,\ddr \rho_{x_j^*}(\bm{s}).
		\end{equation}
	\end{enumerate}
\end{theorem}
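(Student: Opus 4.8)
The plan is to lift the latent-variable disintegration of \cite{James2009} (cf.\ also \cite{Regazzini2003}) from completely random measures to completely random vectors, working with the Poisson representation $\bm{\crm} = \int_{\Omega_d \times \X} \bm{s}\, N(\ddr \bm{s}, \ddr x)$ for $N$ a Poisson random measure with mean $\nu$, and invoking the multivariate Laplace functional of \cite{Catalano2021}. Since within group $i$ the observations are i.i.d.\ from $\crm_i / \crm_i(\X)$, the data $\bm X_{1:d}$ is encoded by the distinct values $X_1^*, \dots, X_k^*$, the counts $n_{ij}$, and the assignment of observations to distinct values, and its conditional ``likelihood'' given $\bm{\crm}$ is proportional to $\prod_{i=1}^d \crm_i(\X)^{-n_i} \prod_{j=1}^k \prod_{i=1}^d \crm_i(\{X_j^*\})^{n_{ij}}$, where $\crm_i(\{X_j^*\})$ is the mass of the atom of $\crm_i$ at $X_j^*$. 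The first move is to strip each normalizing power via the Gamma identity $\crm_i(\X)^{-n_i} = \Gamma(n_i)^{-1} \int_0^{+\infty} u_i^{n_i-1} e^{-u_i \crm_i(\X)}\, \ddr u_i$, introducing the latent vector $\bm U = (U_1, \dots, U_d)$; this reduces the posterior computation to evaluating, for bounded measurable $h$, the exponentially tilted mixed moments $\E\bigl[ h(\bm{\crm})\, e^{-\bm u \cdot \bm{\crm}(\X)} \prod_{i,j} \crm_i(\{X_j^*\})^{n_{ij}} \bigr]$ and then integrating the result against $\prod_i u_i^{n_i-1}\, \ddr \bm u$.

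The core computation combines the multivariate Mecke (Campbell) equation for Poisson random measures with an exponential change of measure. Writing $\prod_{i,j} \crm_i(\{X_j^*\})^{n_{ij}}$ as a sum over ordered $k$-tuples of distinct atoms of $N$ carrying the locations $X_1^*, \dots, X_k^*$, the Mecke equation pins those $k$ atoms: the $j$-th pinned atom with mass $\bm s$ contributes the weight $\prod_{i=1}^d s_i^{n_{ij}} e^{-\bm u \cdot \bm s}\, \ddr \rho_{X_j^*}(\bm s)$ (with $\ddr P_0(X_j^*)$ carried along — this is where diffuseness of $P_0$ enters, forcing the $k$ distinct values to occupy $k$ distinct atoms a.s.), while the residual configuration is again Poisson with mean $\nu$. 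By the exponential tilting (Esscher transform) for Poisson random measures, $\E[F(N)\, e^{-\bm u \cdot \bm{\crm}(\X)}] = e^{-\psi(\bm u)}\, \widetilde\E_{\bm u}[F(N)]$, with $\psi$ the multivariate Laplace exponent and $\widetilde\E_{\bm u}$ the expectation under the Poisson random measure of mean $e^{-\bm u \cdot \bm s}\, \ddr \nu(\bm s, x)$, i.e.\ under a CRV $\bm{\crm}^*$ with L\'evy intensity $\nu^*_{\bm u}$. Normalizing each pinned-atom weight by $\tau_{n_{1j}, \dots, n_{dj}\mid X_j^*}(\bm u) = \int_{\Omega_d} \prod_i s_i^{n_{ij}} e^{-\bm u \cdot \bm s}\, \ddr \rho_{X_j^*}(\bm s)$ turns it into the law \eqref{eq:jumps}, and by the Poisson independence properties the jump vectors $\bm J_1, \dots, \bm J_k$ and the residual part $\bm{\crm}^*$ are mutually independent given $\bm u$.

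Assembling the factors from the two steps, the joint law of $(\bm{\crm}, \bm U)$ given $\bm X_{1:d}$ factorizes as the density $f_{\bm U}(\bm u) \propto \prod_i u_i^{n_i-1}\, e^{-\psi(\bm u)} \prod_j \tau_{n_{1j}, \dots, n_{dj}\mid X_j^*}(\bm u)$, which is exactly \eqref{eq:latent}, times, conditionally on $\bm U = \bm u$, the law of $\bm{\crm}^* + \sum_{j=1}^k \bm J_j \delta_{X_j^*}$ with the ingredients distributed as in (i)--(ii). Marginalizing out the $\bm{\crm}$-coordinate recovers the marginal law of the data (and shows $f_{\bm U}$ is proper), and marginalizing out $\bm U$ yields the claimed $\bm{\crm} \mid \bm X_{1:d} \eqd \bm{\crm}^* + \sum_j \bm J_j \delta_{X_j^*}$; in particular $\bm{\crm} \mid (\bm X_{1:d}, \bm U)$ is a CRV. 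The standing assumption that $\rho_x$ is infinitely active $P_0$-a.s.\ is what makes $0 < \crm_i(\X) < +\infty$ a.s., so that model \eqref{eq:model} and the Gamma identity are well-posed, and it guarantees $\nu^*_{\bm u}$ is a genuine L\'evy intensity.

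The main obstacle is carrying out the disintegration rigorously in the presence of a singular likelihood — the $\crm_i/\crm_i(\X)$ are a.s.\ discrete and $P_0$ is diffuse, so there is no density on sample space to manipulate directly. One must instead identify the joint law of the partially exchangeable partition, the distinct values, and $\bm{\crm}$ as a measure, with the distinct values appearing through the ``hitting'' intensities $\tau_{n_{1j}, \dots, n_{dj}\mid X_j^*}(\bm u)\, P_0(\ddr X_j^*)$, and then extract the conditional law of $\bm{\crm}$ from this joint object (the same mechanism underlies the FuRBI posterior of \cite{ascolani2024}). The remaining checks are routine: the Tonelli interchange of the $\bm u$-integral with the Poisson expectations (legitimate by positivity), finiteness of $\tau_{\bm m \mid x}(\bm u)$ and of $\int f_{\bm U}$ (which follow from $\int_{\Omega_d}(1 \wedge \|\bm s\|)\, \ddr \rho_x(\bm s) < +\infty$ together with the damping $e^{-\bm u \cdot \bm s}$), and the observation that the bookkeeping of which observation realizes which distinct value only contributes an overall multiplicative constant, leaving the conditional laws unchanged.
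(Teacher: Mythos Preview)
Your proposal is correct in outline and takes a genuinely different route from the paper. Both arguments share the Gamma identity $\crm_i(\X)^{-n_i} = \Gamma(n_i)^{-1}\int_0^\infty u_i^{n_i-1}e^{-u_i\crm_i(\X)}\,\ddr u_i$ to introduce the latent $\bm U$, but diverge in how they handle the non-dominated likelihood and extract the jump laws. The paper works with the conditional Laplace functional $\E(e^{-\sum_i\int f_i\,\ddr\crm_i}\mid \bm X_{1:d}=\bm x_{1:d})$, regularises the singular conditioning by replacing exact values with shrinking $\epsilon$-balls $B_\epsilon(x_j^*)$ (so the conditioning event has positive probability), and then recovers the jump contributions by differentiating the Laplace functional via auxiliary tilting parameters $\bm\eta$ and the multivariate Fa\`a di Bruno formula; the asymptotically dominant term in the Fa\`a di Bruno expansion isolates exactly the factor $\int_{\Omega_d}\prod_i s_i^{n_{ij}}e^{-\bm s\cdot(\bm f(x_j^*)+\bm u)}\,\ddr\rho_{x_j^*}(\bm s)$ for each $j$. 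Your approach instead invokes the Poisson partition calculus directly: the Mecke/Palm formula pins the $k$ atoms carrying $X_1^*,\dots,X_k^*$ and delivers the same tilted moment integrals in one stroke, and the Esscher transform replaces the explicit $e^{-\psi(\bm u)}$ bookkeeping. What the paper's route buys is a self-contained argument that does not presuppose fluency with Palm distributions or the reduced Campbell measure, at the price of the $\epsilon\to0$ asymptotics and the Fa\`a di Bruno combinatorics; what your route buys is a shorter and more structural derivation once that machinery is in hand, and it makes the conditional independence of $\bm{\crm}^*$ and the $\bm J_j$'s transparent from Poisson superposition rather than from factorisation of the Laplace functional. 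Your identification of the rigorous disintegration as the main obstacle is apt and is precisely what the $\epsilon$-ball construction in the paper is engineered to sidestep.
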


\noindent
Note that the L\'evy measure $\rho_x$ does not need a density on $\Omega_d$. This may be seen as a technical detail on $(0,+\infty)$, but it is relevant on $\Omega_d$, as some popular models, such as GM-dependent measures, do not admit a density; see \citet[Lemma 6]{Catalano2024}.

We now specialize Theorem~\ref{th:posterior_crv} to hierarchical CRVs. For simplicity, we assume that $\rho$ and $\rho_0$ have L\'evy densities on $(0,+\infty)$, denoted with the same notation; this implies that both $\id(\rho)$ and $\id(\rho_0)$ have a p.d.f. \cite[Theorem 27.7]{Sato1999}. A first result shows the conditional quasi-conjugacy of the model. Indeed, conditionally on $\bm{U}$, we can interpret the vector $\bm{\crm}^*$ as a hierarchical CRV with heterogeneous marginal distributions.

\begin{proposition}
	\label{th:posterior_hcrv}
	Let $\rho$ and $\rho_0$ have L\'evy densities and let $\bm{\crm} \sim {\rm hCRV}(\rho, \rho_0, P_0)$. Conditionally on $\bm{U}$, the {\rm CRV} $\bm{\crm}^*$ in Theorem~\ref{th:posterior_crv} satisfies
	\begin{align*}
		\bm{\crm}^* \mid \crm_0^*, \bm{U} & \sim \prod_{i=1}^d \CRM \left( e^{-U_i s} \rho(s)\,\ddr s \otimes \crm_0^* \right); \\ 
		\crm_0^* \mid \bm{U} & \sim \CRM \left( e^{-\sum_{i=1}^d \psi(U_i)\,s} \rho_0(s)\,\ddr s \otimes P_0 \right).
	\end{align*}
\end{proposition}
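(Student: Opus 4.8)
The plan is to exhibit both $\bm{\crm}^{*}\mid\bm{U}$ and the random vector on the right-hand side of the statement as homogeneous CRVs with the same base measure $P_{0}$ and the same Laplace functional, hence equal in distribution. By Theorem~\ref{th:posterior_crv}(i) applied with $\nu=\nu_{h}=\rho_{h}\otimes P_{0}$, the vector $\bm{\crm}^{*}\mid\bm{U}$ is a CRV with L\'evy intensity $e^{-\bm{U}\cdot\bm{s}}\,\ddr\rho_{h}(\bm{s})\,\ddr P_{0}(x)$, so its law is pinned down by its Laplace functional. For the right-hand side I would first condition on $\crm_0^{*}$: the $\crm_i^{*}$ are then independent $\CRM(e^{-U_{i}s}\rho(s)\,\ddr s\otimes\crm_0^{*})$, so the conditional Laplace functional at nonnegative $f_{1},\dots,f_{d}$ is $\exp(-\int_{\X}\sum_{i=1}^{d}\psi^{(U_i)}(f_i(x))\,\crm_0^{*}(\ddr x))$ with $\psi^{(U_i)}(\lambda)=\int_{0}^{+\infty}(1-e^{-\lambda s})e^{-U_is}\rho(s)\,\ddr s$; integrating against $\crm_0^{*}\sim\CRM(e^{-As}\rho_{0}(s)\,\ddr s\otimes P_{0})$ with $A=\sum_{j=1}^{d}\psi(U_j)$ yields Laplace functional $\exp(-\int_{\X}\psi_{0}^{(\bm{U})}(\sum_{i=1}^{d}\psi^{(U_i)}(f_i(x)))\,P_{0}(\ddr x))$ with $\psi_{0}^{(\bm{U})}(\mu)=\int_{0}^{+\infty}(1-e^{-\mu s})e^{-As}\rho_{0}(s)\,\ddr s$. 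Since this has the form $\exp(-\int_{\X}\Psi^{*}(f_1(x),\dots,f_d(x))\,P_{0}(\ddr x))$, it simultaneously certifies that the right-hand side is a homogeneous CRV with base $P_{0}$, so no separate structural argument is needed, and it remains only to match the exponent $\Psi^{*}$ with that of $\bm{\crm}^{*}\mid\bm{U}$.

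The matching rests on the elementary exponential-tilting identity $\int_{0}^{+\infty}(1-e^{-\lambda s})e^{-as}\,\eta(\ddr s)=\psi_{\eta}(\lambda+a)-\psi_{\eta}(a)$, valid for any L\'evy measure $\eta$ with Laplace exponent $\psi_{\eta}$ and any $a\ge0$ (write $1-e^{-\lambda s}=(1-e^{-(\lambda+a)s})-(1-e^{-as})$ after factoring $e^{-as}$). With $\eta=\rho$, $a=U_i$ it gives $\psi^{(U_i)}(\lambda)=\psi(\lambda+U_i)-\psi(U_i)$, so $\sum_{i}\psi^{(U_i)}(f_i(x))=\sum_{i}\psi(f_i(x)+U_i)-A$; with $\eta=\rho_{0}$, $a=A$ it gives $\psi_{0}^{(\bm{U})}(\mu)=\psi_{0}(\mu+A)-\psi_{0}(A)$. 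Substituting collapses the right-hand side exponent to $\psi_{0}(\sum_{i}\psi(f_i(x)+U_i))-\psi_{0}(A)$. On the other side I would insert $\ddr\rho_{h}(\bm{s})=\int_{0}^{+\infty}\prod_{i}\ddr P_{\id(t\rho)}(s_i)\,\ddr\rho_{0}(t)$ from Proposition~\ref{th:crv}, interchange the $t$- and $\bm{s}$-integrals by Tonelli (the integrand is nonnegative), and use $\int e^{-\mu s}\,\ddr P_{\id(t\rho)}(s)=e^{-t\psi(\mu)}$: the inner integral becomes $e^{-tA}-e^{-t\sum_{i}\psi(f_i(x)+U_i)}$, and integrating it against $\rho_{0}$ returns exactly $\psi_{0}(\sum_{i}\psi(f_i(x)+U_i))-\psi_{0}(A)$, which matches. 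This closes the argument.

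A slicker variant I would mention matches L\'evy densities directly: an Esscher tilt of a pure-jump infinitely divisible law by $e^{-U_is}$ is again pure-jump infinitely divisible, so $f_{\id(t\rho^{(U_i)})}(s)=e^{t\psi(U_i)-U_is}f_{\id(t\rho)}(s)$ where $\rho^{(U_i)}(s)=e^{-U_is}\rho(s)$; feeding these into the hierarchical L\'evy-density formula for the right-hand side makes the product of the factors $e^{t\psi(U_i)}$ cancel the tilt $e^{-At}$ of $\rho_{0}$, leaving precisely $e^{-\bm{U}\cdot\bm{s}}\rho_{h}(\bm{s})$. The step that needs the most care is conceptual rather than computational: the right-hand side has \emph{heterogeneous} idiosyncratic L\'evy measures $e^{-U_is}\rho(s)\,\ddr s$ and is therefore not literally an ${\rm hCRV}$ in the sense of the definition, so one must check that it is still a CRV of the homogeneous type. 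As noted above, this is handled for free by computing its Laplace functional, which comes out in the homogeneous product form; the interchange of integrals is licit by nonnegativity, and the only external inputs are that $\id(t\rho)$ has Laplace transform $e^{-t\psi(\cdot)}$ and a p.d.f.\ for every $t>0$ (the latter under the standing assumption on $\rho$, via \cite[Theorem 27.7]{Sato1999}). Once the tilting/subordination cancellation is spotted, everything else is routine bookkeeping.
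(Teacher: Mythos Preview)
Your proposal is correct. In fact, the ``slicker variant'' you sketch at the end \emph{is} the paper's proof: the paper first isolates the Esscher-tilt identity $e^{-us}f_{\id(t\rho)}(s)=e^{-t\psi(u)}f_{\id(t\rho_u)}(s)$ as a standalone lemma, then applies it inside the integral representation of $\rho_h$ to rewrite $e^{-\bm{U}\cdot\bm{s}}\rho_h(\bm{s})$ as $\int_0^{+\infty}\prod_i f_{\id(t\rho_{U_i})}(s_i)\,e^{-t\sum_i\psi(U_i)}\rho_0(t)\,\ddr t$, and recognises this as the L\'evy density of the heterogeneous hierarchy. Your primary route via Laplace functionals is a genuine alternative: it uses the tilting identity only at the level of Laplace exponents ($\psi^{(a)}(\lambda)=\psi(\lambda+a)-\psi(a)$) rather than at the density level, which is more elementary and sidesteps invoking \cite[Theorem 27.7]{Sato1999} for the existence of $f_{\id(t\rho)}$ until the very end. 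The paper's density-level argument is shorter and more transparent about \emph{why} the hierarchical structure survives the tilt (the factors $e^{t\psi(U_i)}$ aggregate into exactly the tilt needed on $\rho_0$), whereas your Laplace-functional computation arrives at the same cancellation after telescoping $\psi_0(\cdot+A)-\psi_0(A)$. Both are sound; the paper simply chose the one you called slicker.
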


In order to sample from the posterior, one has to sample the jumps $\bm{J}_j$'s at fixed locations and the latent variables $\bm{U}$, which are $d$-dimensional random variables. When $d$ is small, the sampling task can be performed via $d$-dimensional rejection sampling or approximated by a Metropolis-Hastings scheme. However, when $d$ is moderate or large, reducing the dimension of the proposal becomes essential to preserve efficiency. For hierarchical CRVs, the sampling of jumps can be reduced to the sampling of $1$-dimensional random variables.
To this end, define the quantity 
\begin{equation} 
	\label{eq:tau_bar}
	\bar \tau_m(u, t) = \int_0^{+\infty} s^m e^{-u s} f_{\scriptscriptstyle \id (t \rho)}(s) \, \ddr s.
\end{equation}
Remarkably, the quantity above may be computed without integration. Indeed, whenever an explicit expression for the Laplace transform of ${\rm ID}(t\rho)$ is available, one may instead compute its derivatives, since
\begin{equation*}
	\bar \tau_m(u, t) = (-1)^m \frac{\ddr^m}{\ddr u^m} \int_0^{+\infty} e^{-u s} f_{\scriptscriptstyle \id (t \rho)}(s) \, \ddr s. 
\end{equation*}

\begin{proposition} 
	\label{prop:jumps}
	Let $\rho$ and $\rho_0$ have L\'evy densities and let $\bm{\crm} \sim {\rm hCRV}(\rho, \rho_0, P_0)$. Conditionally on the latent variables $\bm{U}$ and for every $j=1,\dots,k$, the jumps ${\bm J}_j = (J_{1j},\dots, J_{dj})$ in Theorem~\ref{th:posterior_crv} satisfy
	\begin{equation*}
		J_{1j}, \dots, J_{dj} \mid \bm{U}, J_{0j} \, \sim \, 
		f_{ \scriptscriptstyle {\bm J}_j \mid \bm{U}, J_{0j}} (\bm{s}) = \prod_{i=1}^d \frac{s_i^{n_{ij}} e^{-U_i s_i} f_{\scriptscriptstyle \id(J_{0j} \rho)}(s_i)}{\bar \tau_{n_{ij}}(U_i, J_{0j})},
	\end{equation*}
	where $J_{0j}$ is a random variable having p.d.f.
	$\displaystyle f_{\scriptscriptstyle J_{0j} \mid \bm{U}}(t) \propto \prod_{i=1}^d \bar \tau_{n_{ij}}(U_i,t) \,\rho_0(t)$.
\end{proposition}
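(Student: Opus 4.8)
The plan is to obtain the stated two-stage representation directly from the conditional law \eqref{eq:jumps} of $\bm{J}_j$ given $\bm{U}$ supplied by Theorem~\ref{th:posterior_crv}, by inserting the mixture form of the hierarchical L\'evy density from Theorem~\ref{th:crv} and introducing the mixing variable as an explicit latent variable $J_{0j}$. First I would observe that, since $\bm{\crm}\sim{\rm hCRV}(\rho,\rho_0,P_0)$ is homogeneous, the measure $\rho_{X_j^*}$ in \eqref{eq:jumps} equals the hierarchical L\'evy density $\rho_h$ for every $j$; moreover, since $\rho$ has a L\'evy density and, being the idiosyncratic measure of the infinitely active hCRV underlying the normalization \eqref{eq:norm} (cf.\ Lemma~\ref{th:infinite_activity}), has infinite total mass, $\id(t\rho)$ admits a p.d.f.\ for every $t>0$ \cite[Theorem 27.7]{Sato1999}, so the ``in particular'' clause of Theorem~\ref{th:crv} gives $\rho_h(\bm{s}) = \int_0^{+\infty}\prod_{i=1}^d f_{\scriptscriptstyle \id(t\rho)}(s_i)\,\ddr\rho_0(t)$. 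Substituting this into \eqref{eq:jumps},
\[
    f_{\scriptscriptstyle \bm{J}_j\mid\bm{U}}(\bm{s}) \;\propto\; \int_0^{+\infty} \rho_0(t)\prod_{i=1}^d s_i^{n_{ij}}\,e^{-U_i s_i}\,f_{\scriptscriptstyle \id(t\rho)}(s_i)\;\ddr t .
\]

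Next I would define $J_{0j}$ by data augmentation, declaring $(\bm{J}_j,J_{0j})$ to have joint density, given $\bm{U}$, proportional to the integrand above on $(0,+\infty)^d\times(0,+\infty)$; marginalising out $t$ then recovers exactly the law of $\bm{J}_j\mid\bm{U}$ displayed above, so this is a faithful augmentation. The core step is a single application of Tonelli's theorem: the integrand is nonnegative and factorises across $i$ and $t$, so integrating out $\bm{s}$ first gives
\[
    f_{\scriptscriptstyle J_{0j}\mid\bm{U}}(t) \;\propto\; \rho_0(t)\prod_{i=1}^d \int_0^{+\infty} s_i^{n_{ij}}\,e^{-U_i s_i}\,f_{\scriptscriptstyle \id(t\rho)}(s_i)\,\ddr s_i \;=\; \rho_0(t)\prod_{i=1}^d \bar\tau_{n_{ij}}(U_i,t),
\]
by the definition \eqref{eq:tau_bar} of $\bar\tau_m$, which is precisely the asserted marginal law of $J_{0j}$. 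Finiteness of all normalising constants follows from the same Tonelli argument, since the total mass of the joint density equals that of $f_{\scriptscriptstyle \bm{J}_j\mid\bm{U}}$, finite because $\bm{J}_j$ is a genuine random vector by Theorem~\ref{th:posterior_crv}; in particular $\bar\tau_{n_{ij}}(U_i,t)\in(0,+\infty)$ for $f_{\scriptscriptstyle J_{0j}\mid\bm{U}}$-almost every $t$, so the conditioning in the final step is well posed.

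Finally I would divide the joint density by the marginal of $J_{0j}$ to obtain, for a.e.\ $t$,
\[
    f_{\scriptscriptstyle \bm{J}_j\mid\bm{U},J_{0j}=t}(\bm{s}) \;=\; \prod_{i=1}^d \frac{s_i^{n_{ij}}\,e^{-U_i s_i}\,f_{\scriptscriptstyle \id(t\rho)}(s_i)}{\bar\tau_{n_{ij}}(U_i,t)},
\]
a product of one-dimensional densities in $s_1,\dots,s_d$; hence $J_{1j},\dots,J_{dj}$ are conditionally independent given $(\bm{U},J_{0j})$ with the stated marginals, and marginalising $t$ reproduces the law of $\bm{J}_j\mid\bm{U}$ by construction, completing the argument.

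The only genuinely delicate point is the measure-theoretic bookkeeping at the start: ensuring $\rho_h$ admits the claimed integral representation --- equivalently, that $\id(t\rho)$ has a p.d.f., which is where the absolute-continuity-plus-infinite-mass hypothesis enters --- and justifying the interchange of the $\ddr t$ and $\ddr\bm{s}$ integrations. Both reduce to Tonelli since every integrand is nonnegative, and everything else is the routine normalisation of the augmented density. (If one preferred to avoid densities on $\Omega_d$ altogether, the same identity can be read off at the level of Laplace transforms of $\bm{J}_j$ by recognising the mixture, but the density-level computation above is the most transparent.)
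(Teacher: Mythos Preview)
Your proposal is correct and follows essentially the same route as the paper: substitute the mixture form of $\rho_h$ from Theorem~\ref{th:crv} into \eqref{eq:jumps}, then recognise the integrand as a joint density in $(\bm{s},t)$ and read off the marginal of $t$ and the conditional of $\bm{s}$ via Tonelli. The paper's write-up is slightly more compact---it multiplies and divides by $\prod_i \bar\tau_{n_{ij}}(U_i,t)$ inside the integral to display the mixture directly---but your explicit data-augmentation framing and your additional remarks on why $\id(t\rho)$ admits a density and why the normalising constants are finite are entirely sound and arguably clearer.
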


\noindent
Therefore, Proposition~\ref{prop:jumps} reduces the sampling of each $d$-dimensional jumps vector to the easier task of sampling $d$ conditionally independent $1$-dimensional jumps, given an additional variable. These random variables can be sampled exactly in the case of gamma-gamma hCRVs, as detailed in Section~\ref{sec:gamma-gamma}.

As for the latent variables ${\bm U}$, the evaluation of their p.d.f.~\eqref{eq:latent} up to a normalizing constant requires to compute the cumulants $\tau_{\bm{m} \mid x}$ in \eqref{eq:cumulants}.
Although this task can be tackled on a case-by-case basis, the $d$-dimensional integral in their definition can be reduced to a $1$-dimensional integral for hierarchical CRVs. Given their homogeneity, the cumulant $\tau_{\bm{m} \mid x} = \tau_{\bm{m}}$ does not depend on $x \in \X$.

\begin{lemma}
	\label{prop:cumulants}
	Let $\rho$ and $\rho_0$ have L\'evy densities on $(0,+\infty)$ 
	and let $\bm{\crm} \sim {\rm hCRV}(\rho, \rho_0, P_0)$. For $\bm{m} \in \mathbb{N}^d$ and $\bm{u} \in \Omega_d$, the cumulants in \eqref{eq:cumulants} are expressed as 
	\begin{equation*}
		\tau_{\bm{m}}(\bm{u}) = \int_0^{+\infty} \prod_{i=1}^d \bar \tau_{m_i} (u_i, t) \rho_0(t) \, \ddr t.
	\end{equation*}
\end{lemma}

\noindent
Interestingly, the cumulant $\tau_{n_{1j},\dots, n_{dj}}(\bm{U})$ is the normalizing constant for the density of the jump $J_{0j}$ in Proposition~\ref{prop:jumps}. 
These quantities represent the building blocks for any computational method approximating the $d$-dimensional distribution of latent variables $\bm{U}$. 
However, when $d$ is large, reducing to lower-dimensional distributions becomes essential. Therefore, we propose an alternative representation of these latent variables as conditionally independent gamma random variables with random rate parameters; this result may also be of interest in the univariate case.
For $\bm{m} \in \mathbb{N}^k$ and $\bm{t} \in (0,+\infty)^{k+1}$, set $m_\bullet = \sum_{j=1}^k m_j$ and define the integral
\begin{equation} \label{eq:norm_constant}
	C(\bm{m}; \bm{t}) = \int_{(0,+\infty)^{k+1}} (s_0 + s_1 + \cdots + s_k)^{-m_{\bullet}} f_{\scriptscriptstyle \id (t_0 \rho)}(s_{0}) \prod_{j=1}^k s_{j}^{m_{j}}  f_{\scriptscriptstyle \id(t_j \rho)}(s_{j}) \,\ddr \bm{s}.
\end{equation}

\begin{proposition}
	\label{prop:latent}
	Let $\rho$ and $\rho_0$ have L\'evy densities and let $\bm{\crm} \sim {\rm hCRV}(\rho, \rho_0, P_0)$. Then,
	\begin{equation*}
		U_1,\dots, U_d \mid \beta_1, \dots, \beta_d \sim \prod_{i=1}^d {\rm Gamma}(n_{i}, \beta_i),
	\end{equation*}
	where $\beta_i = S_{i0} + S_{i1} + \dots, S_{ik}$, for each $i = 1, \dots, d$. Moreover, the $\bm{S}_i = (S_{i0},\dots, S_{ik})$ are conditionally independent vectors given $\bm T = (T_0,T_1,\dots,T_k)$, and their p.d.f.s satisfy
	\begin{align*}
		f_{ \scriptscriptstyle\bm{S}_i \mid {\bm T}}(s_{i0},\dots,s_{ik})  & \propto (s_{i0} + s_{i1} + \dots + s_{ik})^{-n_{i}} f_{\scriptscriptstyle \id (T_0 \rho)}(s_{i0}) \prod_{j=1}^k s_{ij}^{n_{ij}} f_{\scriptscriptstyle \id(T_j \rho)}(s_{ij}), \\
		f_{ \scriptscriptstyle {\bm T}}(t_0,\dots,t_k) & \propto \prod_{i=1}^d C(n_{i1},\dots,n_{ik}; \bm{t}) f_{\scriptscriptstyle \id(\rho_0)}(t_0) \prod_{j=1}^k \rho_0(t_j).
	\end{align*}
\end{proposition}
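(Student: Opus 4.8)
The plan is to expand the latent density \eqref{eq:latent} in the hierarchical case into a multiple integral and then recognise it as the $\bm u$-marginal of an augmented density whose conditionals are precisely those in the statement. By Proposition~\ref{th:crv} the Laplace exponent appearing in \eqref{eq:latent} is $\psi_h$, with $\psi_h(\bm u)=\psi_0\big(\sum_{i=1}^d\psi(u_i)\big)$, and, since $\bm{\crm}$ is homogeneous, Proposition~\ref{prop:cumulants} gives $\tau_{n_{1j},\dots,n_{dj}\mid X_j^*}(\bm u)=\int_0^{+\infty}\prod_{i=1}^d\bar\tau_{n_{ij}}(u_i,t_j)\,\rho_0(t_j)\,\ddr t_j$. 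The two structural identities I would use are that $e^{-\psi_0(\lambda)}$ is the Laplace transform of $\id(\rho_0)$ and $e^{-t\psi(u)}=\bar\tau_0(u,t)$ is the Laplace transform of $\id(t\rho)$, so that
\begin{equation*}
    e^{-\psi_h(\bm u)}=\int_0^{+\infty}\prod_{i=1}^d e^{-t_0\psi(u_i)}\,f_{\scriptscriptstyle\id(\rho_0)}(t_0)\,\ddr t_0=\int_0^{+\infty}\prod_{i=1}^d\bar\tau_0(u_i,t_0)\,f_{\scriptscriptstyle\id(\rho_0)}(t_0)\,\ddr t_0.
\end{equation*}
Substituting this together with the expression for the $\tau$'s into \eqref{eq:latent}, then writing each $\bar\tau_{n_{ij}}(u_i,t_j)=\int_0^{+\infty}s_{ij}^{n_{ij}}e^{-u_is_{ij}}f_{\scriptscriptstyle\id(t_j\rho)}(s_{ij})\,\ddr s_{ij}$ (and $\bar\tau_0(u_i,t_0)$ likewise with $m=0$), all integrals can be pulled outside by Tonelli, since every integrand is nonnegative, yielding
\begin{equation*}
    f_{\scriptscriptstyle\bm U}(\bm u)\propto\int\!\!\int\prod_{i=1}^d\Big[u_i^{n_i-1}e^{-u_i\beta_i}\prod_{j=1}^k s_{ij}^{n_{ij}}\,f_{\scriptscriptstyle\id(t_0\rho)}(s_{i0})\prod_{j=1}^k f_{\scriptscriptstyle\id(t_j\rho)}(s_{ij})\Big]f_{\scriptscriptstyle\id(\rho_0)}(t_0)\prod_{j=1}^k\rho_0(t_j)\,\ddr\bm s\,\ddr\bm t,
\end{equation*}
where $\beta_i=s_{i0}+\dots+s_{ik}$ and the variables $\bm s=(s_{ij})_{i\le d,\,0\le j\le k}$ and $\bm t=(t_0,\dots,t_k)$ range over $(0,+\infty)^{d(k+1)}$ and $(0,+\infty)^{k+1}$.

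Next I would augment. Since $u_i^{n_i-1}e^{-u_i\beta_i}$ equals $\Gamma(n_i)\beta_i^{-n_i}$ times the ${\rm Gamma}(n_i,\beta_i)$ density $g_{n_i,\beta_i}$ in $u_i$, the previous display exhibits $f_{\scriptscriptstyle\bm U}$ as proportional to the $\bm u$-marginal of the density on $\R^d\times(0,+\infty)^{d(k+1)}\times(0,+\infty)^{k+1}$ defined by
\begin{equation*}
    f_{\scriptscriptstyle\bm U,\bm S,\bm T}(\bm u,\bm s,\bm t)\propto\prod_{i=1}^d\Big[g_{n_i,\beta_i}(u_i)\,\beta_i^{-n_i}\prod_{j=1}^k s_{ij}^{n_{ij}}\,f_{\scriptscriptstyle\id(t_0\rho)}(s_{i0})\prod_{j=1}^k f_{\scriptscriptstyle\id(t_j\rho)}(s_{ij})\Big]f_{\scriptscriptstyle\id(\rho_0)}(t_0)\prod_{j=1}^k\rho_0(t_j).
\end{equation*}
This is a bona fide probability density because its total mass coincides, up to the constant in \eqref{eq:latent}, with that of $f_{\scriptscriptstyle\bm U}$, which is finite as \eqref{eq:latent} is a p.d.f.\ by Theorem~\ref{th:posterior_crv}; hence its $\bm u$-marginal is exactly $f_{\scriptscriptstyle\bm U}$, so $(\bm S,\bm T)$ is a valid latent augmentation of $\bm U$, and the displayed factorisation immediately gives $U_1,\dots,U_d\mid\bm S,\bm T\sim\prod_{i=1}^d{\rm Gamma}(n_i,\beta_i)$ with $\beta_i=S_{i0}+\dots+S_{ik}$.

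It then remains to marginalise out $\bm U$. As $g_{n_i,\beta_i}$ integrates to one, $f_{\scriptscriptstyle\bm S,\bm T}(\bm s,\bm t)$ is proportional to $\prod_{i=1}^d\big[(s_{i0}+\dots+s_{ik})^{-n_i}\prod_{j=1}^k s_{ij}^{n_{ij}}f_{\scriptscriptstyle\id(t_0\rho)}(s_{i0})\prod_{j=1}^k f_{\scriptscriptstyle\id(t_j\rho)}(s_{ij})\big]f_{\scriptscriptstyle\id(\rho_0)}(t_0)\prod_{j=1}^k\rho_0(t_j)$; given $\bm T=\bm t$ this factorises over $i$, each factor depending only on $\bm s_i$, so $\bm S_1,\dots,\bm S_d$ are conditionally independent given $\bm T$ with $f_{\scriptscriptstyle\bm S_i\mid\bm T}$ as stated. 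Integrating $\bm s_i$ out of the $i$-th bracket produces exactly $C(n_{i1},\dots,n_{ik};\bm t)$ of \eqref{eq:norm_constant}, since there $m_\bullet=\sum_{j=1}^k n_{ij}=n_i$; this yields the stated $f_{\scriptscriptstyle\bm T}$ and completes the argument.

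The calculations are routine once the architecture is in place; the crux, and the main obstacle, is the nested Laplace-transform identity for $e^{-\psi_h(\bm u)}$ and the bookkeeping of the $d(k+1)$ variables $s_{ij}$ and $k+1$ variables $t_j$, arranged so that the integrand splits as a product over $i$ — which is exactly what makes the $U_i$ gamma and conditionally independent, and likewise the $\bm S_i$. One also has to verify the regularity used implicitly: $\id(\rho_0)$ and $\id(t\rho)$ for $t>0$ are absolutely continuous with no mass at $0$, which follows from infinite activity (Lemma~\ref{th:infinite_activity}, in force under model \eqref{eq:model}) together with \citet[Theorem~27.7]{Sato1999} under the standing assumption that $\rho$ and $\rho_0$ have L\'evy densities; and that every interchange of integrals is justified by Tonelli since all integrands are nonnegative.
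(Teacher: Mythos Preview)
Your proposal is correct and follows essentially the same route as the paper: both expand $e^{-\psi_h(\bm u)}$ via the Laplace transforms of $\id(\rho_0)$ and $\id(t_0\rho)$ to introduce $(t_0,s_{10},\dots,s_{d0})$, unfold each $\tau_{n_{1j},\dots,n_{dj}}$ via Proposition~\ref{prop:cumulants} and \eqref{eq:tau_bar} to introduce $(t_j,s_{1j},\dots,s_{dj})$, apply Tonelli, and then read off the gamma conditional for $U_i$, the conditional independence of the $\bm S_i$'s given $\bm T$, and the marginal of $\bm T$ through the normalising constant $C$. Your write-up is slightly more explicit about the augmentation viewpoint and the regularity (absolute continuity of $\id(t\rho)$, Tonelli), but the architecture and every substantive step coincide with the paper's proof.
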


\noindent
Proposition~\ref{prop:latent} reduces the sampling of the $d$-dimensional vector of dependent latent variables $\bm{U}$ to the sampling of $d$ conditionally independent $1$-dimensional random variables $\beta_1,\dots,\beta_d$, which represent the scale parameters of $U_1, \dots, U_d$. This sampling task can be approached through standard computational methods or further simplified, as described in the following section.


\section{The normalized gamma-gamma hCRV}
\label{sec:gamma-gamma}

\subsection{Posterior representation}
\label{sec:posterior_sampling_gamma}

This section specifies results in Section~\ref{sec:posterior} to the fundamental example of gamma-gamma hCRVs, and provides details for the subsequent implementation of posterior sampling algorithms. In the following, $((t))_n = \Gamma(t+n)/\Gamma(t)$ denotes the ascending factorial.

\begin{proposition} \label{prop:gamma_jumps}
	Let $\bm{\crm}$ be a gamma-gamma hCRV, as in Example~\ref{ex:gamma-gamma}, and define
	\begin{equation*}
		\lambda(\bm{U}) = \frac{b_0}{\alpha} + \sum_{i=1}^d \log\left( 1 + \frac{U_i}{b}\right).
	\end{equation*}
	Then, conditionally on the latent variables $\bm{U}$, and for each $j = 1, \dots, k$,
	\begin{itemize}
		\item[(a)] the {\rm CRV} $\bm{\crm}^*$ in Proposition~\ref{th:posterior_hcrv} is a hierarchy of conditionally gamma CRMs, 
		\begin{align*}
			\crm_1^*,\dots,\crm_d^* \mid \crm_0^*, \bm{U} & \sim \prod_{i=1}^d \CRM\left( \alpha \, s^{-1}\, e^{-b(1 + U_i/b)\,s} \ddr s \otimes \crm_0^*\right); \\
			\crm_0^* \mid \bm{U} & \sim \CRM \left( \alpha_0 \, s^{-1}\, e^{-\alpha\, \lambda(\bm{U})\,s} \ddr s \otimes  P_0 \right);
		\end{align*}
		\item[(b)] the jumps ${\bm J}_j$ in Proposition~\ref{prop:jumps} are conditionally independent and, for $i = 1, \dots, d$,
		\begin{equation*}
			J_{ij} \mid U_i, J_{0j} \sim {\rm Gamma}\big(\alpha J_{0j} + n_{ij}, b\,(1 + U_i/b)\big); 
		\end{equation*}
		\item[(c)] the density of the rescaled random variable $\alpha J_{0j}$ is proportional to
		\begin{equation} \label{eq:basejump}
			f_{\scriptscriptstyle \alpha J_{0j} \mid \bm{U}}(t)   \propto t^{-1} e^{-\lambda(\bm{U})\,t} \prod_{i=1}^d ((t))_{n_{ij}}.
		\end{equation}
	\end{itemize}
\end{proposition}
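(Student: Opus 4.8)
\textbf{Proof plan for Proposition~\ref{prop:gamma_jumps}.}

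The plan is to specialize the three general results of Section~\ref{sec:posterior} --- Proposition~\ref{th:posterior_hcrv}, Proposition~\ref{prop:jumps}, and the definition of $\bar\tau_m$ in~\eqref{eq:tau_bar} --- to the gamma case, where all the relevant one-dimensional objects are explicit. The key computational fact I would establish first is that $\id(t\rho)$ with $\rho(s) = \alpha s^{-1} e^{-bs}$ is the gamma distribution with shape $t\alpha$ and rate $b$ (this is already recalled in Example~\ref{ex:gamma-gamma}), so $f_{\scriptscriptstyle \id(t\rho)}(s) = b^{t\alpha} \Gamma(t\alpha)^{-1} s^{t\alpha-1} e^{-bs}$. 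From here, part~(a) is essentially immediate: Proposition~\ref{th:posterior_hcrv} says the idiosyncratic L\'evy density is $e^{-U_i s}\rho(s) = \alpha s^{-1} e^{-(b+U_i)s}$, which I rewrite as $\alpha s^{-1} e^{-b(1+U_i/b)s}$; for the base measure, Proposition~\ref{th:posterior_hcrv} gives density $e^{-\sum_i \psi(U_i) s}\rho_0(s)$ with $\psi(\lambda) = \alpha\log(1+\lambda/b)$ the one-dimensional gamma Laplace exponent (Definition~\ref{def:gamma}), so $\sum_i \psi(U_i) = \alpha\sum_i \log(1+U_i/b) = \alpha(\lambda(\bm U) - b_0/\alpha) = \alpha\lambda(\bm U) - b_0$, and combining with $\rho_0(s) = \alpha_0 s^{-1} e^{-b_0 s}$ gives exactly $\alpha_0 s^{-1} e^{-\alpha\lambda(\bm U) s}$.

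For part~(b), I would compute $\bar\tau_m(u,t) = \int_0^\infty s^m e^{-us} f_{\scriptscriptstyle \id(t\rho)}(s)\,\ddr s$ explicitly: plugging in the gamma density, this is $b^{t\alpha}\Gamma(t\alpha)^{-1}\int_0^\infty s^{t\alpha+m-1} e^{-(b+u)s}\,\ddr s = b^{t\alpha}(b+u)^{-(t\alpha+m)}\Gamma(t\alpha+m)/\Gamma(t\alpha)$, i.e. $\bar\tau_m(u,t) = ((t\alpha))_m\, b^{t\alpha}(b+u)^{-(t\alpha+m)}$ using the ascending factorial. Substituting into the formula of Proposition~\ref{prop:jumps}, the conditional density of $J_{ij}$ given $\bm U, J_{0j}$ is proportional to $s^{n_{ij}} e^{-U_i s} f_{\scriptscriptstyle \id(J_{0j}\rho)}(s) \propto s^{\alpha J_{0j} + n_{ij} - 1} e^{-(b+U_i)s}$, which is the kernel of a $\mathrm{Gamma}(\alpha J_{0j}+n_{ij},\, b(1+U_i/b))$ law, and the $J_{ij}$'s are conditionally independent across $i$ because the joint density in Proposition~\ref{prop:jumps} factorizes. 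For part~(c), Proposition~\ref{prop:jumps} gives $f_{\scriptscriptstyle J_{0j}\mid\bm U}(t) \propto \prod_{i=1}^d \bar\tau_{n_{ij}}(U_i,t)\,\rho_0(t)$; inserting the closed form for $\bar\tau$ and $\rho_0(t) = \alpha_0 t^{-1} e^{-b_0 t}$ yields $f_{\scriptscriptstyle J_{0j}\mid\bm U}(t) \propto t^{-1} e^{-b_0 t} \prod_{i=1}^d ((t\alpha))_{n_{ij}}\, b^{t\alpha}(b+U_i)^{-(t\alpha+n_{ij})}$; absorbing $t$-independent constants and collecting the $e^{\pm t\alpha \log(\cdot)}$ terms gives $f_{\scriptscriptstyle J_{0j}\mid\bm U}(t)\propto t^{-1} e^{-\alpha\lambda(\bm U) t}\prod_{i=1}^d ((t\alpha))_{n_{ij}}$ (using $b^{td\alpha}\prod_i (b+U_i)^{-t\alpha} = \exp(-t\alpha\sum_i\log(1+U_i/b))$). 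Finally I would perform the change of variables $t \mapsto \alpha t$ to pass from the density of $J_{0j}$ to that of $\alpha J_{0j}$: the Jacobian is a constant, $((\alpha t \cdot \alpha^{-1}))$ --- wait, more carefully, writing $w = \alpha J_{0j}$, $f_w(w) \propto f_{J_{0j}}(w/\alpha) \propto (w/\alpha)^{-1} e^{-\lambda(\bm U) w}\prod_i ((w))_{n_{ij}} \propto w^{-1} e^{-\lambda(\bm U) w}\prod_i ((w))_{n_{ij}}$, which is~\eqref{eq:basejump}; the shape $\alpha J_{0j}+n_{ij}$ in part~(b) then matches the natural parameter $w + n_{ij}$.

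The main obstacle is not conceptual but bookkeeping: correctly tracking which exponential-in-$t$ factors are absorbed into the normalizing constant versus which combine into $e^{-\alpha\lambda(\bm U)t}$, and making sure the $b^{t\alpha}$ and $(b+U_i)^{-t\alpha}$ powers telescope precisely into $\sum_i \log(1+U_i/b)$ rather than leaving a spurious $b$-dependence --- this is exactly where the definition of $\lambda(\bm U)$ is engineered to make things clean. I would also double-check the reparametrization from $J_{0j}$ to $\alpha J_{0j}$ is consistent between parts~(b) and~(c), since part~(b) is most naturally stated with $\alpha J_{0j}$ appearing as a shape, which is why the proposition chooses to present~\eqref{eq:basejump} for the rescaled variable.
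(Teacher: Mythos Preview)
Your proposal is correct and follows essentially the same route as the paper's own proof: specialize Propositions~\ref{th:posterior_hcrv} and~\ref{prop:jumps} using the explicit gamma form of $\id(t\rho)$, compute $\bar\tau_m(u,t)$ in closed form, and collect the $t$-dependent exponential factors into $e^{-\alpha\lambda(\bm U)t}$ before rescaling $J_{0j}\mapsto\alpha J_{0j}$. The paper's write-up is slightly more streamlined (it writes $(1/(1+U_i/b))^{\alpha t}$ directly rather than splitting into $b^{t\alpha}(b+U_i)^{-t\alpha}$), but the computations and their order are the same.
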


\noindent
Details on sampling algorithms for the hierarchy in (a) are given in Sections~\ref{app:sampling_random_measure} and~\ref{app:expint}, where we describe an efficient implementation of Newton's method for inverting the exponential integral, based on a logarithmic transformation that guarantees convergence for each starting point. 
Notably, this same algorithm can be readily employed to sample \emph{a priori} from a gamma-gamma hCRV; see Remark~\ref{rmk:sampling} and Section~\ref{app:sampling_random_measure}.
Moreover, the nontrivial task of sampling the jumps $\bm{J}_j$'s is here reduced to sampling variables $\alpha J_{0j}$'s in (c), whose densities can be computed up to normalizing constants. A convenient option is to resort to computational methods that generate approximate samples, such as Metropolis-Hastings algorithms. Details are provided in Section~\ref{app:mh_sampling}, where we empirically show that the random-walk Metropolis-Hastings scheme on the log-scale with Gaussian increments outperforms the approach proposed in \cite{Barrios2013}, based on gamma proposals.
Remarkably, we also develop an exact sampler in Section~\ref{sec:exact_sampling_gamma}.

The remaining step to sample from the posterior is sampling the latent variables 
$\bm{U}$, whose characterization in Proposition~\ref{prop:latent} can be simplified through a change of variables. 

\begin{proposition} \label{prop:gamma_latent}
	Let $\bm{\crm}$ be a gamma-gamma hierarchical {\rm CRV}. The distribution of latent variables $\bm{U} = (U_1, \dots, U_d)$ can be decomposed as follows:
	\begin{itemize}
		\item[(a)] for each $i = 1, \dots, d$, one has
		$U_i \mid \beta_i \sim {\rm Gamma}(n_i, \beta_i)$, with $\beta_i \mid T \sim {\rm Gamma}(\alpha T, b)$;
		\item[(b)] the density of $\alpha T$ is proportional to
		\begin{equation} 
			\label{eq:joint_baselatent_simplex}
			f_{\scriptscriptstyle \alpha T}(t) \propto t^{\alpha_0-1} e^{-(b_0/\alpha)\,t} \prod_{i=1}^d \frac{1}{((t))_{n_{i}}} \int_{\Delta^k} v_0^{\alpha_0-1} \prod_{j=1}^k \left( v_j^{-1}\prod_{i=1}^d ((t v_j))_{n_{ij}} \right) \ddr \bm{v},
		\end{equation}
		where $\bm{v} = (v_0,\dots,v_k)$ is an auxiliary vector on $k$-dimensional unit simplex $\Delta^k$.
	\end{itemize}
\end{proposition}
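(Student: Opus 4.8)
The plan is to start from the general characterisation in Proposition~\ref{prop:latent} and specialise each ingredient to the gamma-gamma case, then perform the stated change of variables. First I would recall that for a gamma CRM with idiosyncratic L\'evy density $\rho(s) = \alpha s^{-1} e^{-bs}$, the measure $\id(t\rho)$ is a $\mathrm{Gamma}(t\alpha, b)$ distribution, so $f_{\scriptscriptstyle\id(t\rho)}(s) = b^{t\alpha}\Gamma(t\alpha)^{-1} s^{t\alpha-1} e^{-bs}$, and similarly $f_{\scriptscriptstyle\id(\rho_0)}(t)$ is the density of $\mathrm{Gamma}(\alpha_0, b_0)$ while $\rho_0(t) = \alpha_0 t^{-1} e^{-b_0 t}$. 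Plugging $f_{\scriptscriptstyle\id(t_j\rho)}$ into the definition \eqref{eq:norm_constant} of $C(\bm m;\bm t)$, the inner integral over $\bm s$ becomes a Liouville-type Dirichlet integral: with $s_\bullet = s_0+\dots+s_k$, one writes $s_\bullet^{-m_\bullet}$ together with the product of $s_j^{t_j\alpha - 1 + m_j}$ and the common factor $e^{-b s_\bullet}$, and integrates out. A standard Dirichlet computation gives $C$ proportional to $\prod_j \Gamma(t_j\alpha + m_j)/\Gamma(t_j\alpha)$ times a ratio of gamma functions in $t_\bullet\alpha + m_\bullet$ and a power of $b$; using the ascending-factorial notation $((t))_n = \Gamma(t+n)/\Gamma(t)$ this collapses to $\prod_j ((t_j\alpha))_{m_j}$ up to a factor depending only on $\bm t$ through $t_\bullet$, which will be absorbed.

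Next I would substitute these explicit forms into the $\bm S_i$ and $\bm T$ densities of Proposition~\ref{prop:latent}. For $\bm T$, after the rescaling $T_j \mapsto \alpha T_j$ (equivalently working with $\alpha T_0,\dots,\alpha T_k$), the factor $f_{\scriptscriptstyle\id(\rho_0)}(t_0)\prod_j \rho_0(t_j)$ contributes $t_0^{\alpha_0-1}\prod_{j\ge1} t_j^{-1}$ and an exponential $e^{-(b_0/\alpha)(t_0+\dots+t_k)}$, while $\prod_i C(n_{i1},\dots,n_{ik};\bm t)$ contributes $\prod_i \prod_j ((t_j))_{n_{ij}}$ divided by $\prod_i ((t_\bullet))_{n_{i\bullet}}$ — here one must check that the $s_\bullet^{-m_\bullet}$ factor is exactly what produces the $((t_\bullet))_{n_{i\bullet}}$ in the denominator rather than something messier, which is the crux of why this particular normalisation \eqref{eq:norm_constant} was chosen. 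Then the change of variables $T = T_0 + \dots + T_k$, $V_j = T_j/T$ (with $V_0 = T_0/T$) has Jacobian $t^k$, and the resulting joint density in $(\alpha T, \bm V)$ rearranges into \eqref{eq:joint_baselatent_simplex}: the $t^{\alpha_0 - 1}e^{-(b_0/\alpha)t}$ and $v_0^{\alpha_0-1}$ come from the $\crm_0$ terms, the $\prod_i ((t))_{n_{i\bullet}}^{-1}$ from the shared denominator, and each $v_j^{-1}\prod_i ((tv_j))_{n_{ij}}$ from the $j$th idiosyncratic block after writing $t_j = t v_j$. For part (a), from Proposition~\ref{prop:latent} one has $U_i \mid \beta_i \sim \mathrm{Gamma}(n_{i\bullet}, \beta_i)$ directly; the claim $\beta_i \mid T \sim \mathrm{Gamma}(\alpha T, b)$ requires showing that, after marginalising the simplex coordinates, $\beta_i = S_{i0} + \dots + S_{ik}$ is a sum whose conditional law given $\bm T$ (hence given $T$ by the same Dirichlet-integral argument applied to the $\bm S_i$ density) is $\mathrm{Gamma}(\alpha(T_0+\dots+T_k), b) = \mathrm{Gamma}(\alpha T, b)$ — this is the gamma-additivity fact that a sum of independent $\mathrm{Gamma}(\alpha T_j, b)$ variables is $\mathrm{Gamma}(\alpha T_\bullet, b)$, combined with the observation that the $s_\bullet^{-n_{i\bullet}}$ weighting in $f_{\scriptscriptstyle\bm S_i\mid\bm T}$ is precisely cancelled when one integrates against the $s_j^{n_{ij}}$ and reconstitutes the density of the sum.

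The main obstacle I anticipate is the bookkeeping in the Dirichlet/Liouville integral for $C(\bm m;\bm t)$: one has to be careful that the factor $(s_0+\dots+s_k)^{-m_\bullet}$ interacts correctly with the gamma densities so that the $((t_\bullet))_{m_\bullet}$-type denominator appears cleanly, and that all $b$-dependent and purely $t_\bullet$-dependent prefactors either cancel between numerator and the implicit normalising constant or collapse into the stated proportionality. A clean way to organise this is to first prove a lemma: if $X_0,\dots,X_k$ are independent with $X_j \sim \mathrm{Gamma}(a_j, b)$, then $E[(X_0+\dots+X_k)^{-m_\bullet} \prod_{j\ge1} X_j^{m_j}]$ equals, up to the explicit constant $\prod_j \Gamma(a_j+m_j)/\Gamma(a_j) \cdot \Gamma(a_\bullet)/\Gamma(a_\bullet + m_\bullet)$ (with no residual $b$ dependence since $m_\bullet$ total powers are cancelled), and then apply it with $a_j = t_j\alpha$. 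Everything else is substitution, the Jacobian $t^k$ of the simplex change of variables, and grouping terms; the gamma-convolution identity for part (a) is standard and can be invoked directly.
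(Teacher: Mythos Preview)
Your proposal is correct and follows essentially the same route as the paper: both start from Proposition~\ref{prop:latent}, specialise the gamma densities, evaluate $C(\bm m;\bm t)$ via the beta--gamma (Dirichlet) change of variables $\beta_i = \sum_j S_{ij}$, $W_{ij}=S_{ij}/\beta_i$, and then pass from $(T_0,\dots,T_k)$ to $(T,\bm V)$ with Jacobian $t^k$. One small caution on your part~(a) explanation: the $S_{ij}$'s are \emph{not} conditionally independent $\mathrm{Gamma}(\alpha T_j,b)$ under $f_{\bm S_i\mid\bm T}$ because of the $s_\bullet^{-n_{i\bullet}}$ weight, so ``gamma-additivity'' is not the right heuristic---the clean argument is exactly your Dirichlet change of variables, which shows the $(\beta_i,\bm W_i)$ density factorises and the $\beta_i$-marginal is $\mathrm{Gamma}(\alpha T,b)$ because the $n_{i\bullet}$ extra powers from $\prod_j s_{ij}^{n_{ij}}$ cancel the $-n_{i\bullet}$ from the weight.
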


\noindent
For implementation convenience, one can directly sample $U_i/b$. Indeed, these are the relevant quantities in Proposition~\ref{prop:gamma_jumps} for sampling the jumps $J_{i1}, \dots, J_{ik}$ and the random measure $\crm^*_i$, and computing $\lambda(\bm{U})$. The non-standard step for sampling $\bm{U}$ is the marginal sampling of the variable $\alpha T$ in \eqref{eq:joint_baselatent_simplex}, whose joint density with the auxiliary vector $\bm{V}$ is known up to a normalizing constant. In Section~\ref{app:mh_sampling} we describe a Metropolis-Hastings within Gibbs algorithm to sample from the marginal distribution of $\alpha T$.
Alternative procedures to obtain exact samples from $\alpha T$ are considered in Section~\ref{sec:exact_sampling_gamma}. 

\subsection{Exact sampling}
\label{sec:exact_sampling_gamma}

As highlighted by Propositions~\ref{prop:gamma_jumps}-\ref{prop:gamma_latent}, posterior sampling from the normalized gamma-gamma hCRV mainly consists in sampling gamma random variables, except for two critical steps, namely the sampling of random variables $\alpha J_{01}, \dots, \alpha J_{0k}$ in \eqref{eq:basejump} and the marginal sampling of random variable $\alpha T$ in \eqref{eq:joint_baselatent_simplex}. In both cases, one may resort to MCMC algorithms, based on Metropolis-Hastings steps, to obtain approximate samples from such distributions. In the following, we propose alternative algorithms based on analytical manipulations of their density functions, which instead allow for exact sampling.
For this purpose, 
introduce the coefficients $S(q_1,\dots,q_d; h)$, defined by the recursive relation 
\begin{equation} 
	\label{eq:multivariate_stirling}
	S(q_1,\dots,q_\ell+1,\dots,q_d; h) = q_\ell S(q_1,\dots,q_d; h) + S(q_1,\dots,q_d; h-1),
\end{equation}
with boundary conditions $S(0,\dots,0; 0) = 1$ and $S(q_1,\dots,q_d; 0) = S(0,\dots,0; h) = 0$ for $q_{\bullet}> 0$ or $h > 0$. Remarkably, these coefficients are the natural generalization of the unsigned Stirling numbers of the first kind to blocked permutations. Indeed, $S(q_1,\dots,q_d; h)$
represents the number of permutations with $h$ cycles in the Young subgroup of $\mathcal{S}_{q_\bullet}$ (group of permutations of $q_\bullet$ elements) induced by the integer partition $(q_1,\dots, q_d)$. Henceforth, we refer to the coefficients $S(q_1,\dots,q_d; h)$ as multivariate Stirling numbers.
For convenience, let $m_{ij} = \min(1,n_{ij}) \in \{0,1\}$, and define $m_{\bullet j} = \sum_{i=1}^d m_{ij}$. 

\begin{proposition} 
	\label{prop:gamma_basejumps_exact}
	For each $j = 1, \dots, k$, the density of $\alpha J_{0j}$ in \eqref{eq:basejump} can be written as
	\begin{equation*}
		f_{\scriptscriptstyle \alpha J_{0j} \mid \bm{U}}(t) \propto \sum_{h_j=m_{\bullet j}}^{n_{\bullet j}} S(n_{1j},\dots,n_{dj}; h_j) \ t^{h_j-1} e^{-\lambda(\bm{U})\,t}.
	\end{equation*}
\end{proposition}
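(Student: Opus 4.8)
The plan is to expand the product $\prod_{i=1}^d ((t))_{n_{ij}}$ as a polynomial in $t$ and match its coefficients with the combinatorial quantities $S(n_{1j},\dots,n_{dj};h_j)$. Recall the classical expansion of the ascending factorial, $((t))_n = \sum_{h=0}^n c(n,h)\, t^h$, where $c(n,h)$ is the unsigned Stirling number of the first kind, i.e.\ the number of permutations of $n$ elements with exactly $h$ cycles; in particular $c(0,0)=1$, $c(n,0)=0$ for $n\ge 1$, $c(n,h)=0$ for $h>n$, and for $n\ge 1$ the lowest-degree term of $((t))_n$ is $c(n,1)\,t=(n-1)!\,t$.

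First I would multiply out, obtaining
\begin{equation*}
    \prod_{i=1}^d ((t))_{n_{ij}} = \sum_{h_j \ge 0} \Bigg( \sum_{h_1+\dots+h_d = h_j} \prod_{i=1}^d c(n_{ij},h_i) \Bigg) t^{h_j}.
\end{equation*}
The next step is to identify the inner sum with $S(n_{1j},\dots,n_{dj};h_j)$. This can be done in two equivalent ways. Combinatorially, a permutation in the Young subgroup $\mathcal{S}_{n_{1j}}\times\dots\times\mathcal{S}_{n_{dj}}$ is a tuple $(\pi_1,\dots,\pi_d)$ whose total cycle count is the sum of the cycle counts of the $\pi_i$, so partitioning according to those individual cycle counts yields exactly the convolution above, which matches the stated interpretation of $S$. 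Alternatively, and self-containedly, one checks that the convolution $\sum_{h_1+\dots+h_d=h}\prod_i c(q_i,h_i)$ satisfies the recursion \eqref{eq:multivariate_stirling} together with its boundary conditions: substituting the Pascal-type relation $c(q+1,h)=q\,c(q,h)+c(q,h-1)$ into the $\ell$-th factor and re-indexing the shifted summand reproduces $q_\ell\,S(q_1,\dots,q_d;h)+S(q_1,\dots,q_d;h-1)$, while $S(0,\dots,0;0)=1$ and the vanishing of $S$ on the remaining boundary cases follow from the corresponding properties of $c(n,h)$. Either route gives that the coefficient of $t^{h_j}$ in $\prod_i ((t))_{n_{ij}}$ is $S(n_{1j},\dots,n_{dj};h_j)$.

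Finally I would pin down the summation range. Each factor with $n_{ij}=0$ contributes the constant $1$, while each factor with $n_{ij}\ge 1$ has lowest-degree term $t$ and highest-degree term $t^{n_{ij}}$; hence the product has lowest-degree term $t^{m_{\bullet j}}$ (one factor of $t$ per index $i$ with $m_{ij}=1$) and highest-degree term $t^{n_{\bullet j}}$. Plugging the expansion into \eqref{eq:basejump} and factoring out the common $t^{-1}e^{-\lambda(\bm{U})\,t}$ yields
\begin{equation*}
    f_{\scriptscriptstyle \alpha J_{0j}\mid\bm{U}}(t) \propto \sum_{h_j = m_{\bullet j}}^{n_{\bullet j}} S(n_{1j},\dots,n_{dj};h_j)\, t^{h_j-1} e^{-\lambda(\bm{U})\,t},
\end{equation*}
as claimed. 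There is no genuine analytic difficulty here; the only point requiring care is the bookkeeping in the middle step — correctly recognizing the convolution of Stirling numbers as $S$ — and, relatedly, getting the endpoints $m_{\bullet j}$ and $n_{\bullet j}$ right by tracking the vanishing boundary values of $c(n,h)$ (equivalently of $S$).
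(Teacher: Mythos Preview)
Your proof is correct and follows essentially the same approach as the paper: expand each ascending factorial via unsigned Stirling numbers of the first kind, collect the resulting product as a convolution in the exponent, and then identify that convolution with $S(n_{1j},\dots,n_{dj};h_j)$ by verifying the recursion \eqref{eq:multivariate_stirling}. The paper carries out exactly this computation; your additional combinatorial identification via the Young subgroup is a nice touch but not needed, and your handling of the summation endpoints $m_{\bullet j}$ and $n_{\bullet j}$ matches the paper's.
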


\noindent
Therefore, the random variable $\alpha J_{0j}$ is in fact a finite mixture of gamma distributions, with mixing weights $p_{jh_j} \propto \lambda(\bm{U})^{-h_j} \, \Gamma(h_j) \, S(n_{1j},\dots,n_{dj}; h_j)$, for $h_j = m_{\bullet j}, \dots, n_{\bullet j}$. The evaluation of the $S(n_{1j},\dots,n_{dj}; h_j)$'s via the recursive relation in \eqref{eq:multivariate_stirling} has computational cost $\mathcal{O}\big(n_{\bullet j}^2\big)$; see Remark~\ref{remark:computation_stirling}.

\begin{proposition} 
	\label{prop:gamma_baselatent_exact}
	The density function of 
	random variable $\alpha T$ in \eqref{eq:joint_baselatent_simplex} can be written as 
	\begin{equation} \label{eq:density_baselatent}
		f_{\scriptscriptstyle \alpha T}(t) \propto t^{\alpha_0-1} \, e^{-(b_0/\alpha)\,t} \prod_{i=1}^d \frac{1}{((t))_{n_{i}}} \left( \sum_{h=m}^n \frac{c_h}{((\alpha_0))_h} \, t^h \right),
	\end{equation}
	where $m = \sum_{j=1}^k m_{\bullet j}$ and each coefficient $c_h$ is defined, for $S(q_1,\dots,q_d; h)$ in \eqref{eq:multivariate_stirling}, by
	\begin{equation*}
		c_h = \sum_{\substack{h_1 + \cdots + h_k = h \\ m_{\bullet j} \le h_j \le n_{\bullet j}}} \prod_{j=1}^k \Gamma(h_j) \, S(n_{1j},\dots,n_{dj}; h_j).
	\end{equation*}
\end{proposition}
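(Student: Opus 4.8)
The plan is to start from the joint density $f_{\scriptscriptstyle \alpha T, \bm V}(t, \bm v)$ in \eqref{eq:joint_baselatent_simplex} of Proposition~\ref{prop:gamma_latent}(b) and integrate out the simplex-valued vector $\bm{V} = (V_0,\dots,V_k)$. Writing out the dependence on $\bm v$, the only factors involving $\bm v$ are $v_0^{\alpha_0-1}$ and, for each $j = 1,\dots,k$, the factor $v_j^{-1}\prod_{i=1}^d ((t v_j))_{n_{ij}}$. The first step is to expand each ascending factorial $((t v_j))_{n_{ij}}$ as a polynomial in $t v_j$: recall that $((x))_n = \sum_{h} |s(n,h)|\, x^h = \sum_{h} S(n,h)\, x^h$, where $S(n,h)$ are the unsigned Stirling numbers of the first kind (the single-index specialization of \eqref{eq:multivariate_stirling}), with $S(n,h) = 0$ unless $\min(1,n) \le h \le n$. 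Multiplying the $d$ such expansions for a fixed $j$, collecting powers of $t v_j$, and using the identity $\sum_{h_{1j}+\dots+h_{dj}=h_j} \prod_i S(n_{ij},h_{ij}) = S(n_{1j},\dots,n_{dj};h_j)$ — which follows directly from the recursion \eqref{eq:multivariate_stirling} and is exactly the identity stated in the Remark after Proposition~\ref{prop:gamma_basejumps_exact} — one obtains $v_j^{-1}\prod_{i=1}^d ((t v_j))_{n_{ij}} = \sum_{h_j = m_{\bullet j}}^{n_{\bullet j}} S(n_{1j},\dots,n_{dj};h_j)\, t^{h_j}\, v_j^{h_j - 1}$.

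Substituting these expansions for all $j$ and expanding the product over $j$ gives a sum over multi-indices $(h_1,\dots,h_k)$ with $m_{\bullet j} \le h_j \le n_{\bullet j}$, with summand $\big(\prod_{j=1}^k S(n_{1j},\dots,n_{dj};h_j)\big)\, t^{h_1+\dots+h_k}\, v_0^{\alpha_0-1} \prod_{j=1}^k v_j^{h_j-1}$. Now integrate over the simplex $\Delta^k$: the Dirichlet integral $\int_{\Delta^k} v_0^{\alpha_0-1}\prod_{j=1}^k v_j^{h_j-1}\, \ddr\bm v = \Gamma(\alpha_0)\prod_{j=1}^k \Gamma(h_j) / \Gamma(\alpha_0 + h_1+\dots+h_k)$. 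Setting $h = h_1+\dots+h_k$ and using $\Gamma(\alpha_0+h)/\Gamma(\alpha_0) = ((\alpha_0))_h$, the $v$-integration contributes a factor $\prod_{j=1}^k \Gamma(h_j) / ((\alpha_0))_h$. Grouping the sum by the total $h$ (which ranges from $m = \sum_j m_{\bullet j}$ to $n = \sum_j n_{\bullet j} = n_\bullet$) and defining $c_h = \sum_{h_1+\dots+h_k = h,\ m_{\bullet j}\le h_j\le n_{\bullet j}} \prod_{j=1}^k \Gamma(h_j)\, S(n_{1j},\dots,n_{dj};h_j)$ collects everything into the claimed form: the $t$-dependent prefactor $t^{\alpha_0-1} e^{-(b_0/\alpha)t}\prod_{i=1}^d ((t))_{n_i}^{-1}$ survives untouched from \eqref{eq:joint_baselatent_simplex}, and the bracket $\sum_{h=m}^n c_h\, t^h / ((\alpha_0))_h$ emerges. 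Since the normalizing constant of the joint density is finite (it is a probability density by Proposition~\ref{prop:gamma_latent}), marginalization is legitimate and the resulting expression is proportional to $f_{\scriptscriptstyle \alpha T}(t)$.

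The main bookkeeping obstacle is the careful interchange of the finite sums with the Dirichlet integral and the correct tracking of the index ranges: one must verify that $S(n_{1j},\dots,n_{dj};h_j)$ vanishes outside $m_{\bullet j}\le h_j\le n_{\bullet j}$ (so that the $\Gamma(h_j)$ are finite, noting $h_j \ge m_{\bullet j} \ge 1$ since $n_{\bullet j}>0$), and that the multivariate Stirling identity $\sum \prod_i S(n_{ij},h_{ij}) = S(n_{1j},\dots,n_{dj};h_j)$ is applied with the right boundary conditions — this is where the combinatorial interpretation in terms of blocked permutations (Young subgroups) makes the identity transparent. Everything else is a direct computation: no analytic subtleties arise because all sums are finite and the Dirichlet integral converges for $\alpha_0 > 0$ and $h_j \ge 1$.
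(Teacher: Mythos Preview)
Your proposal is correct and follows essentially the same route as the paper's proof: expand each $((tv_j))_{n_{ij}}$ via unsigned Stirling numbers, collect into the multivariate coefficients $S(n_{1j},\dots,n_{dj};h_j)$, and integrate $\bm V$ over $\Delta^k$ with the Dirichlet integral to produce the factor $\prod_j \Gamma(h_j)/((\alpha_0))_h$. The paper likewise notes that the integration is legitimate because $h_j \ge m_{\bullet j} \ge 1$, which is precisely your observation that $n_{\bullet j}>0$ forces $h_j\ge 1$.
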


\noindent
The evaluation of the $c_h$'s may seem computationally expensive when $k$ is large. Indeed, a naive approach would involve $k$ nested cycles, with a computational cost of $\mathcal{O}\big( \prod_{j=1}^k n_{\bullet j}\big)$. However, these coefficients can be obtained through a sequence of discrete convolutions and computed at cost $\mathcal{O}\big(\sum_{j < \ell} n_{\bullet j} n_{\bullet \ell} \big)$, as detailed in Section~\ref{app:coefficients_exact}. Exact samples from the distribution of $\alpha T$ can be obtained resorting to rejection sampling algorithms. Indeed, considering a real parameter $r$, \eqref{eq:density_baselatent} can be rewritten as
\begin{equation*}
	f_{\scriptscriptstyle \alpha T}(t) \propto t^{\alpha_0+r-1} \, e^{-(b_0/\alpha)\,t} \, (t^{-r} R(t)), \qquad R(t) = \prod_{i=1}^d \frac{1}{((t))_{n_{i}}} \left( \sum_{h=m}^n \frac{c_h}{((\alpha_0))_h} \, t^h \right),
\end{equation*}
where $R(t)$ is a ratio of polynomials, and continuous for $t \ge 0$.
The distribution of $\alpha T$ is absolutely continuous with respect to a ${\rm Gamma}(\alpha_0 + r, b_0 / \alpha)$, with Radon-Nikodym derivative proportional to $t^{-r} R(t)$, from which we construct a rejection sampling algorithm. A necessary condition for the Radon-Nikodym derivative is to be bounded above: this is guaranteed when $0 \le r \le m-d$. Within this interval, we choose the value of $r$ that maximizes the acceptance probability, which is equivalent to maximizing 
\begin{equation*}
	r \log t^*(r) - \log R(t^*(r)) + (\alpha_0 + r) \log(b_0/\alpha) - \log \Gamma(\alpha_0 + r),
\end{equation*}
where $t^*(r)$ is the value of $t$ that maximizes $t^{-r} R(t)$. Details on the optimal choice of $r$ can be found in Section~\ref{app:ropt}. Alternatively, the adaptive rejection sampling algorithm of \cite{gilks1992} may considerably improve the acceptance rate. Although computationally convenient, theoretical guarantees are only available if the density of $\alpha T$ is logarithmically concave, which we were unable to prove in general. Therefore, for numerical experiments, we consider the standard rejection sampler outlined above.

\subsection{Posterior sampling algorithms}
\label{sec:algorithms_gamma}

\begin{figure}[t]
	\centering
	\includegraphics[width=.8\linewidth]{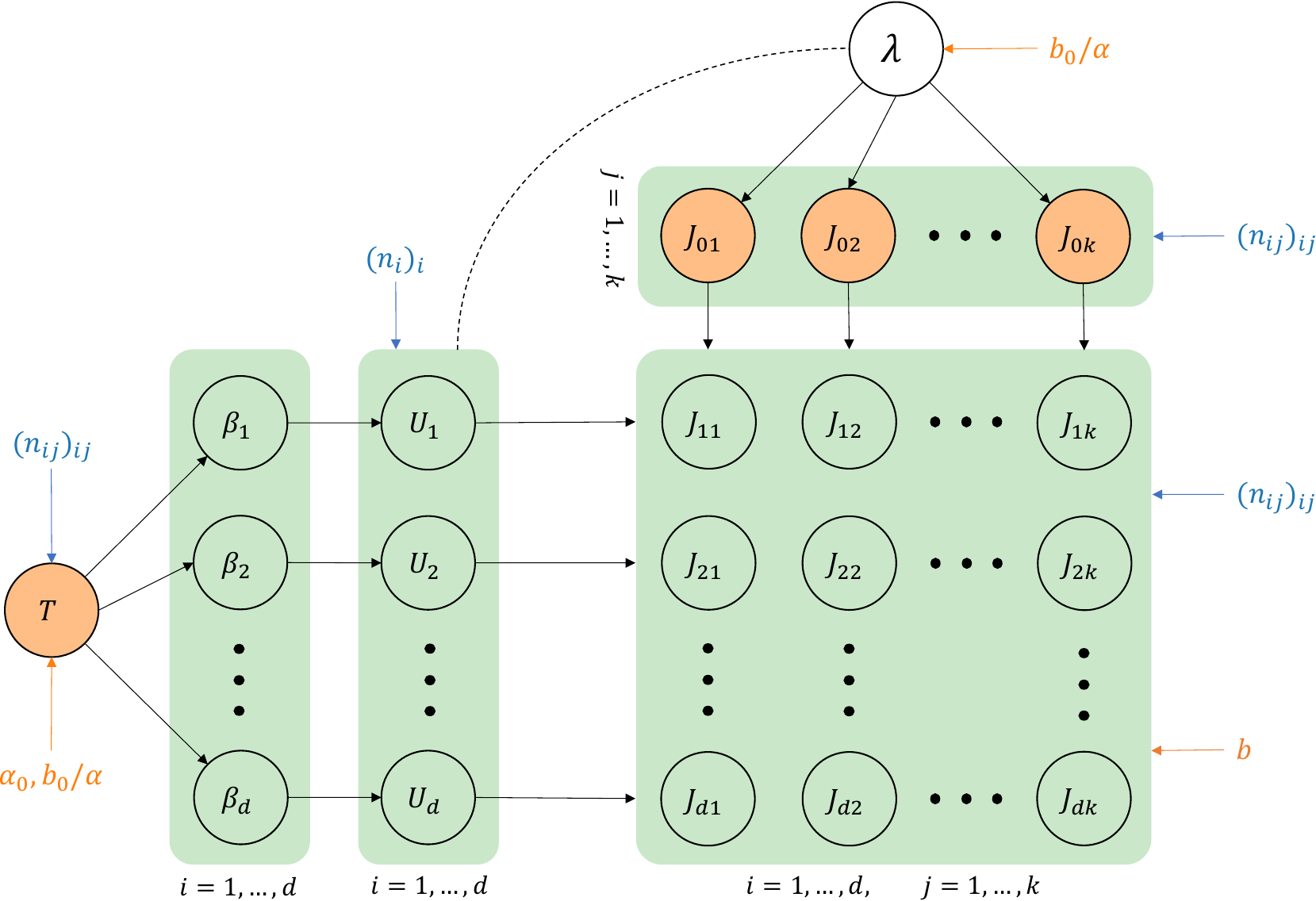}
	\captionsetup{width=0.9\textwidth,font=small}
	\caption{DAG of conditional dependencies between random variables in Algorithms~\ref{alg:mcmc} and~\ref{alg:exact}.
		Red circles represent
		computational bottlenecks; quantities in empty circles are sampled from gamma distributions. Some variables are reported up to scaling w.r.t.~model parameters, e.g. $T$ instead of $\alpha T$.}
	\label{fig:structure}
\end{figure}

Collecting the distributional results described above, as well as the details in
Section~\ref{app:sampling}, we obtain algorithms to sample jumps $\bm{J}$ at fixed locations in the posterior distribution (Proposition~\ref{th:posterior_hcrv}) induced by the gamma-gamma hCRV. 
The Julia implementation of these algorithms is available at
\href{https://github.com/claudiodelsole/hCRV.jl}{github.com/claudiodelsole/hCRV.jl}, together with an interface to allow their integration within the R environment. The common structure of conditional dependencies within the sampling procedures is summarized in Figure~\ref{fig:structure}, where the computational bottlenecks are highlighted by red circles.
Remarkably, the graph of dependencies has no proper cycles, i.e.~it is a directed acyclic graph (DAG): variables are sampled in topological order, with each variable depending only on (some~of) its ancestors. Such dependence structure may enhance mixing.
Furthermore, the proposed algorithms allow for parallelization at different stages. Indeed, after the initial sampling of $\alpha T$ from its marginal, one can parallelize across groups to sample the latent variables $\bm U$ (Proposition~\ref{prop:gamma_latent}). 
Likewise, after the computation of $\lambda(\bm{U})$, one can parallelize across distinct values to sample the conditionally independent variables $\alpha J_{01}, \dots, \alpha J_{0k}$ and further parallelize across groups and distinct values to sample jumps $\bm J$ (Proposition~\ref{prop:gamma_jumps}).

The MCMC scheme for posterior sampling the jumps $\bm{J}$ is summarized in Algorithm~\ref{alg:mcmc}. The state of the Markov chain consists of the latent variable $\alpha T$ and jumps $\alpha J_{01}, \dots, \alpha J_{0k}$, plus auxiliary variables $\bm{V}$, thus having dimension $2k+1$. The exact posterior sampling procedure developed in Section~\ref{sec:exact_sampling_gamma} is instead summarized in Algorithm~\ref{alg:exact}. This scheme outputs i.i.d.~samples from the posterior, but requires a non-trivial initialization. The main computational bottleneck is represented by the rejection sampling step for the latent variable $\alpha T$, which may suffer from low acceptance rates. Numerical illustrations proving the effectiveness of the proposed algorithms are discussed in Section~\ref{app:illustrations}. 

\begin{algorithm}[t]
	\caption{MCMC posterior sampling algorithm for jumps $\bm J$.} 
	\emph{Current state}: $\alpha T, (\alpha J_{01}, \dots, \alpha J_{0k}), (\bm{V})$\;
	$(\alpha T) \leftarrow$ Metropolis-Hastings step from its marginal (via auxiliary variables $\bm{V}$) \;
	\For{$i=1,\dots,d$}{
		$(b \beta_i) \mid (\alpha T) \sim \text{Gamma}(\alpha T,1)$; $(U_i/b) \mid (b\beta_i) \sim \text{Gamma}(n_{i}, b \beta_i)$\;
	}
	compute $\lambda(\bm{U}) = b_0/\alpha + \sum_{i=1}^d \log(1 + U_i/b)$\;
	\For{$j=1,\dots,k$}{
		$(\alpha J_{0j}) \leftarrow$ Metropolis-Hastings step from the distribution of $(\alpha J_{0j}) \mid \lambda(\bm{U})$ \;
	}
	\For{$i=1,\dots,d$}{
		\For{$j=1,\dots,k$}{
			$J_{ij} \mid (\alpha J_{0j}), (U_i/b) \sim \text{Gamma}(n_{ij} + \alpha J_{0j}, b\,(1+U_i/b))$\;
		}
	}
	\label{alg:mcmc}
\end{algorithm}

As highlighted in Proposition~\ref{th:hdp_hyper}, the normalized gamma-gamma hCRV model coincides with the hierarchical Dirichlet process (HDP) with a particular gamma prior on the concentration parameter. Therefore, the algorithms described above can be compared with standard samplers for posterior analysis of the HDP.
Among the various alternatives and their countless variations, we focus on marginal algorithms, and consider 
(i) the Gibbs sampler based on the restaurant franchise metaphor \citep{Teh2006}, in which the allocation to dishes is observed (and thus fixed), and (ii) a collapsed Gibbs sampler for the number of tables serving each dish, derived from the pEPPF; cfr. Section~\ref{app:tables} for details. The latter algorithm does not appear in the existing literature, and may be of independent interest. In fact, it is related to a sampler considered in \cite{baccallado2022} for the hierarchical Pitman-Yor process \citep[see also][]{Camerlenghi2019}, but relies on a further marginalization of the latent tables through the multivariate Stirling numbers in \eqref{eq:multivariate_stirling}, reducing from $kd$ to $k$ latent variables. The marginal sampler (i) involves $n$ latent variables, one for each observation's allocation to a table; hence, its computational complexity rapidly increases with the sample size. In this respect, the collapsed Gibbs sampler (ii) provides a substantial improvement, since its state space has dimension $k$, which is typically much smaller than $n$. 
However, updating each of the $k$ variables (namely the number of tables serving each dish) requires the evaluation of the multivariate Stirling numbers in \eqref{eq:multivariate_stirling}. These grow extremely fast and often cause numerical overflows even for moderately large sample sizes. For this reason, when $n$ is (moderately) large, the marginal sampler (i) remains the practical choice.

In fact, our proposals present some interesting advantages. Firstly, in presence of a (moderately) large number of observations, the MCMC approach in Algorithm~\ref{alg:mcmc} relies on a state of dimension $2k+1$, thereby reducing the computational burden of the marginal sampler (i), without requiring the evaluation of multivariate Stirling numbers. Moreover, the structure of conditional (in)dependencies enables parallelization and potentially speeds up mixing.
Finally, running a Markov chain on the Euclidean space $(0,\infty)^{k+1} \times \Delta^k$ is more convenient than sampling on the constrained space of partitions: many standard tools for MCMC diagnostics are designed for Euclidean spaces, and acceptance rates of Metropolis-Hastings steps can be optimized by tuning the variance of the proposal; see Section~\ref{app:mh_sampling}. On the other hand, when $n$ is small or moderately large, so that the computation of multivariate Stirling numbers in the collapsed sampler (ii) remains feasible, Algorithm~\ref{alg:exact} allows for exact i.i.d.~sampling from the posterior. This approach avoids any potential issue inherent to MCMC schemes, without increasing the computational complexity. 

\begin{algorithm}[t]
	\caption{Exact posterior sampling algorithm for jumps $\bm J$.} 
	\emph{Initialize}: compute coefficients in Propositions~\ref{prop:gamma_basejumps_exact}-\ref{prop:gamma_baselatent_exact}, the optimal $r$ and the upper bound $t^*(r)$. \\
	\While{proposal is not accepted}{
		propose $\alpha T \sim \text{Gamma}(\alpha_0 + r, b_0/\alpha)$\;
		accept proposal with probability proportional to $(\alpha T)^{-r} R(\alpha T)$ (Section~\ref{sec:exact_sampling_gamma})\;
	}
	\For{$i=1,\dots,d$}{
		$(b \beta_i) \mid (\alpha T) \sim \text{Gamma}(\alpha T,1)$; $(U_i/b) \mid (b\beta_i) \sim \text{Gamma}(n_{i}, b \beta_i)$\;
	}
	compute $\lambda(\bm{U}) = b_0/\alpha + \sum_{i=1}^d \log(1 + U_i/b)$\;
	\For{$j=1,\dots,k$}{
		$H_j \mid \lambda \sim \text{Categorical}(p_{j m_{\bullet j}}, \dots, p_{j n_{\bullet j}})$ from rescaled weights (Section~\ref{sec:exact_sampling_gamma})\;
		$(\alpha J_{0j}) \mid H_j, \lambda \sim \text{Gamma}(H_j, \lambda)$\;
	}
	\For{$i=1,\dots,d$}{
		\For{$j=1,\dots,k$}{
			$J_{ij} \mid (\alpha J_{0j}), (U_i/b) \sim \text{Gamma}(n_{ij} + \alpha J_{0j}, b\,(1+U_i/b))$\;
		}
	}
	\label{alg:exact}
\end{algorithm}

\subsection{Simulation studies}
\label{sec:simulations}

In the following, we carry out several simulation studies to compare the algorithms in terms of their execution time, as the dimensions of the input data grow. Specifically, we compare four algorithms: 
(i) the MCMC sampler in Algorithm~\ref{alg:mcmc} with symmetric random-walk Metropolis-Hastings steps on the log-scale (mcmc); (ii) the exact sampler in Algorithm~\ref{alg:exact}; (iii) the marginal Gibbs sampler of \cite{Teh2006} based on the restaurant franchise metaphor (CRF); (iv) the collapsed Gibbs sampler for the number of tables serving each dish, detailed in Section~\ref{app:tables}. 
Algorithms are compared in terms of execution time per effective sample, averaging over $100$ simulated datasets for each experimental setting. The burn-in time for algorithms (i), (iii) and (iv) and the initialization time for algorithm (ii) are deducted from the total execution times. The exact and collapsed Gibbs samplers are stopped when the computations of the multivariate Stirling numbers in \eqref{eq:multivariate_stirling} encounter a numerical overflow. The corresponding execution times are plotted when at least 75 out of 100 experiments are completed without errors.

\begin{figure}[t]
	\centering
	\includegraphics[width=\textwidth]{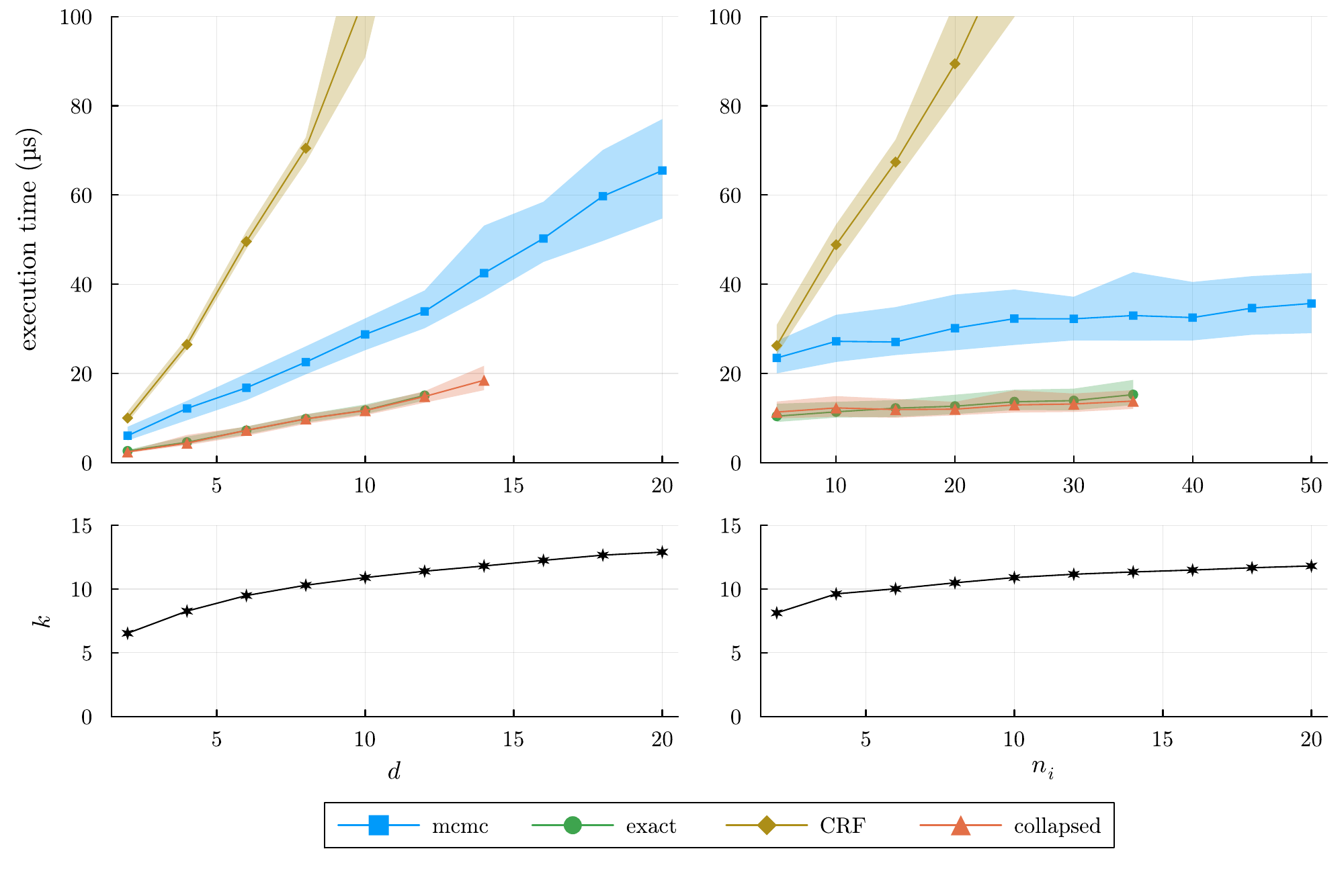}
	\captionsetup{width=.9\textwidth,font=small}
	\caption{Execution times per effective sample for different algorithms with increasing number of groups $d$ (left) and increasing number of observations per group $n_{i}$ (right). Results are averaged over $100$ simulated datasets per experimental setting. Solid curves represent median values, with shaded areas between the first and third quartiles. Plots at the bottom display the mean number of distinct values $k$ for each setting.}
	\label{fig:times_dn}
\end{figure}

The left panel of Figure~\ref{fig:times_dn} compares the algorithms for increasing number of groups $d$, while the number of observations per group $n_{i} = 50$ is fixed. Similarly, the right panel compares the algorithms for increasing number of observations per group $n_{i}$, while the number of groups $d = 20$ is fixed. 
In both cases, observations are sampled from a hierarchical Dirichlet process with concentration parameters $\alpha = 5$ and $\alpha_0 = 3$. Note that parameter $\alpha_0$ impacts the number of columns in the counts matrix $(n_{ij})_{ij}$, while parameter $\alpha$ controls its sparsity.
For the CRF-based Gibbs sampler, the time per effective sample grows more than linearly in the total number of observations, as expected given its sequential allocation structure. The other algorithms show a linear growth rate in the number of groups (Figure~\ref{fig:times_dn}, left panel), with the exact and collapsed samplers displaying a smaller slope compared with the MCMC approach.
These algorithms appear less affected by the number of observations per group, showing a nearly constant behavior as $n_i$ grows (Figure~\ref{fig:times_dn}, right panel).
We argue that their execution times may instead mostly depend on the number of distinct values $k$, which determines the dimension of the state space for both the MCMC and collapsed samplers. In fact, $k$ is slowly increasing in the experimental settings considered above, as displayed in the bottom panels of Figure~\ref{fig:times_dn}. Refer to Section~\ref{app:timesk} for a further simulation study supporting this claim.

In conclusion, the time complexity of both proposed sampling schemes scales linearly in the number of groups $d$ and distinct values $k$, while it is essentially unaffected by the number of observations. In our implementation, the exact sampler shows better performances, yielding around twice as many independent posterior samples as the MCMC approach, in the same computation time. This is comparable with the collapsed Gibbs sampler for the HDP, which however does not output i.i.d.~samples. Despite their efficiency, both algorithms suffer numerical issues as the sample size grows: in our experimental setting, their applicability is limited to $500$-$700$ observations. On the contrary, the MCMC algorithm runs efficiently regardless of the sample size. In particular, it proves much faster than the CRF-based Gibbs sampler for moderate to large sample sizes. 

\subsection{Extensions to other hCRVs}
\label{sec:extensions}

The normalized gamma-gamma hCRV represents both the most relevant and the analytically simplest hCRV construction, since  Proposition~\ref{th:hdp_hyper} establishes its tight connection with the HDP, arguably the cornerstone of Bayesian nonparametric dependent models, and several latent variables involved in its posterior characterization collapse to gamma distributions. Nevertheless, some analytical tractability is preserved beyond the gamma-gamma case, with potential advantages in terms of model flexibility, e.g.~in the asymptotic behavior of the number of distinct values. 
We argue that the tractability of the gamma-gamma hCRV is primarily due to the idiosyncratic component $\rho$ being the L\'evy measure of a gamma CRM. 
Indeed, the results in Propositions~\ref{prop:jumps} and~\ref{prop:latent} heavily depend on the distributional form of ${\rm ID}(t \rho)$, while the choice of $\rho_0$ only affects the distribution of the $J_{0j}$'s and of the vector $\bm{T}$. 
Therefore, one may consider different choices of the base L\'evy measure $\rho_0$ to enhance model flexibility, at a reduced cost in terms of tractability; for example, $\rho_0$ can be taken to be the L\'evy measure of a generalized gamma or stable CRM.
The main limitation is the need for pointwise evaluations of the density of ${\rm ID}(\rho_0)$, whose analytical form is rarely available; however, this task can be performed through well-established numerical methods. 
Instead, the extension of the idiosyncratic measure $\rho$ to other choices is less straightforward; in particular, within the present posterior representation, the availability of a closed-form expression for the density of ${\rm ID}(t \rho)$ for $t > 0$ is essential to sample the jumps $J_{ij}$'s in Proposition~\ref{prop:jumps}, as well as the vectors $\bm{S}_i$'s in Proposition~\ref{prop:latent}. As a result, reasonable fully tractable choices for $\rho$ are limited to few special cases, namely the inverse Gaussian process and $\sigma$-stable process with $\sigma = 0.5$. Nonetheless, the possibility to directly sample the $\beta_i$'s in Proposition~\ref{prop:latent} is limited to the gamma case. The development of alternative techniques to handle ${\rm ID}(t \rho)$
beyond these specifications would considerably enlarge the class of tractable hCRVs, and
represents an interesting direction for future work.
In conclusion, we point out that there are no particular reasons to constrain $\rho$ and $\rho_0$ to take the same analytical form, neither from the modelling nor the computational perspectives. Instead, their choice should be informed by the desired flexibility in the marginal and dependence structures; see Proposition~\ref{th:moments_norm} and the discussion in \citet[][Section 8]{Sankhya2024}.


\section{Application: a fair comparison with the HDP}
\label{sec:comparison}

\begin{figure}[t]
	\centering
	\includegraphics[width=\linewidth]{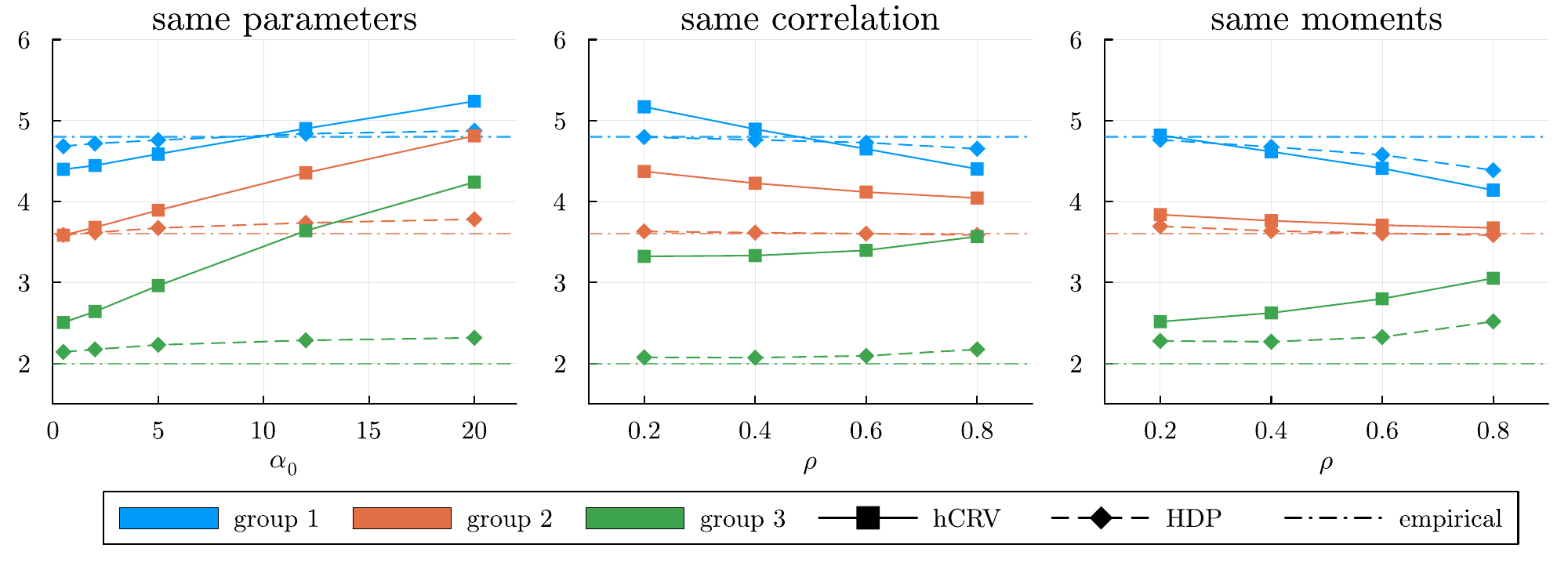}
	\captionsetup{width=0.9\textwidth,font=small}
	\caption{Comparison between predictive means for the normalized gamma-gamma hCRV and the HDP fixing the same hyperparameters $(\alpha,\alpha_0)$, with $\alpha = 1$ (left); the same correlation and different variances, with $\sigma^2$ equal to $0.2$ and $0.8$ respectively (middle); the same variance and correlation, with $\sigma^2 = 0.5$ (right). Data are 3 groups of independent Poisson observations of size $n_{i} = 10$. The prior mean is $P_0 = N(8,1)$.}
	\label{fig:comparison}
\end{figure}

Section~\ref{sec:elicitation} investigates the impact of the hyperparameters on the borrowing of information and shrinkage induced by the normalized gamma-gamma hCRV, and discusses the proper elicitation of its prior dependence structure. In this section, we compare such model with the hierarchical Dirichlet process.
We consider a simulated dataset with $d = 3$ groups of independent Poisson observations, each of size $n_{i} = 10$, with means equal to $2$, $3$ and $4$. 
Proposition~\ref{th:hdp_hyper} shows that the normalized gamma-gamma hCRV is equivalent to the HDP with a particular gamma prior on the concentration parameter $\alpha$. Hence, it may seem natural to compare the normalized gamma-gamma hCRV with the HDP by fixing the same parameters $(\alpha, \alpha_0, P_0)$, as in Figure~\ref{fig:comparison} (left plot), where $P_0 = N(8,1)$.
However, this approach may alter the comparison, since fixing the same parameters can entail very different marginal variance and correlation structures. In particular, for the HDP$(\alpha,\alpha_0,P_0)$, following \cite{Camerlenghi2019},
\begin{gather*}
	E(\tilde P_i(A)) = P_0(A), \qquad \var(\tilde P_i(A)) = \frac{1 + \alpha + \alpha_0}{(1+\alpha_0)(1 + \alpha)} \, P_0(A)(1-P_0(A)), \\
	\corr(\tilde P_i(A), \tilde P_j(A)) = \frac{1 + \alpha}{1 + \alpha + \alpha_0}.
\end{gather*}
Table~\ref{table:limits} reports some limiting behaviors, which notably differ from those of the normalized gamma-gamma hCRV, derived from Example~\ref{ex:moments_gamma}. Therefore, rather than considering the same values of $(\alpha, \alpha_0)$, a fair comparison should consider the same values of the mean, variance and correlation. Specifically, one should choose $\sigma^2, \rho \in (0,1)$ such that $\var(\tilde P_i(A)) = \sigma^2 \, P_0(A)(1-P_0(A))$ and $ \corr(\tilde P_i(A), \tilde P_j(A)) = \rho$, and set the hyperparameters of the models accordingly. For the normalized gamma-gamma hCRV, the corresponding values of $\alpha$ and $\alpha_0$ are obtained by solving the system of non-linear equations
\begin{equation*}
	\rho\, \big(1+\alpha_0/\alpha \, e^{1/\alpha} E_{\alpha_0}(1/\alpha)\big) - 1 = 0, \qquad \sigma^2 (1 + \alpha_0) - 1/\rho = 0,
\end{equation*}
using standard numerical methods. The same task is achieved for the HDP by setting
\begin{equation*}
	\alpha = \frac{1}{1-\rho} \bigg( \frac{1}{\sigma^2}-1 \bigg), \qquad \alpha_0 = \frac{1}{\rho \sigma^2} -1.
\end{equation*}
Figure~\ref{fig:elicitation} displays the values of $(\alpha, \alpha_0)$ for the two models as functions of $(\sigma^2, \rho)$.
The right plot of Figure~\ref{fig:comparison} compares the normalized gamma-gamma hCRV with the HDP fixing the same mean, variance, and correlation. In contrast, the middle plot illustrates a common approach in which the correlation is fixed without accounting for variance. In conclusion, when fixing the same parameters $(\alpha, \alpha_0)$, one might mistakenly conclude that the gamma-gamma hCRV borrows much more information than the HDP. A similar misinterpretation arises when fixing the correlation without adjusting for variance. Ultimately, by matching variance and correlation, it becomes clear that the gamma-gamma hCRV exhibits only slightly more borrowing of information. This increase can be attributed to the hierarchical CRV leveraging not only the common base random probability but also information contained in the random concentration parameter.

\begin{figure}[t]
	\centering
	\includegraphics[width=\linewidth]{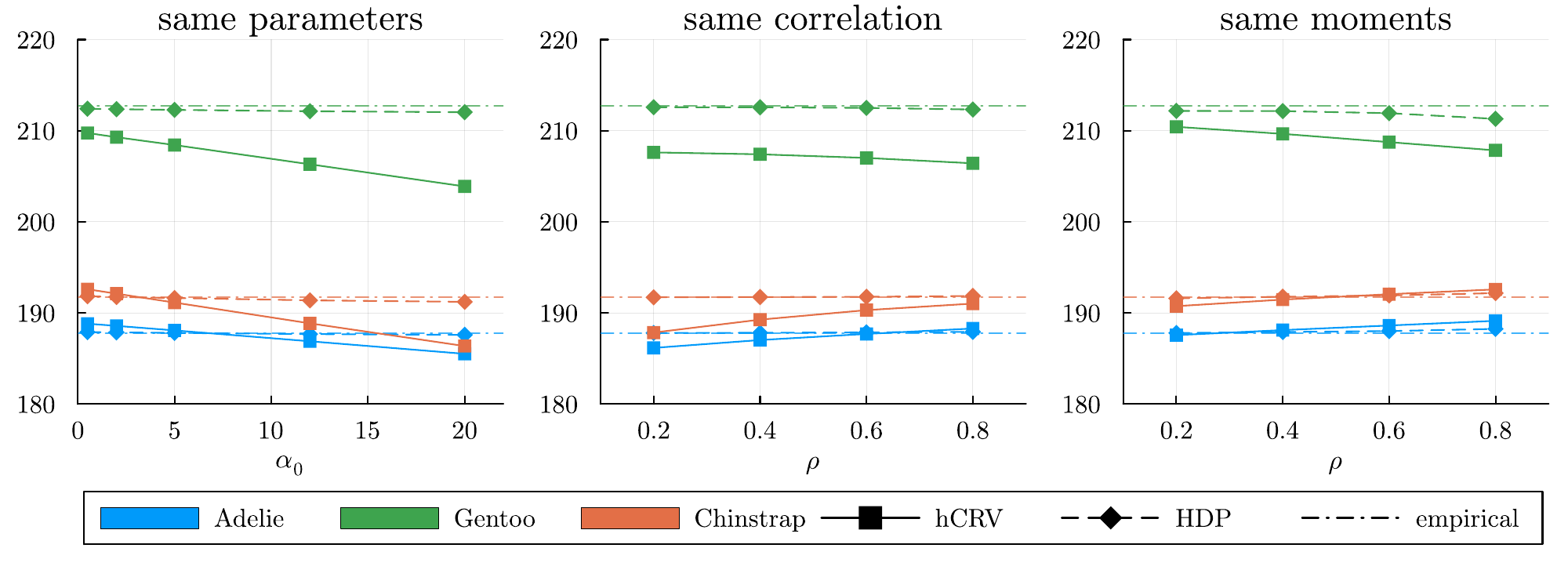}
	\captionsetup{width=0.9\textwidth,font=small}
	\caption{Comparison between predictive means for the normalized gamma-gamma hCRV and the HDP  fixing the same hyperparameters $(\alpha,\alpha_0)$, with $\alpha = 1$ (left); the same correlation and different variances, with $\sigma^2$ equal to $0.2$ and $0.8$ respectively (middle); the same variance and correlation, with $\sigma^2 = 0.5$ (right). Data are flipper lengths for 3 species of female penguins of the Palmer Archipelago \citep{palmerpenguins}. 
	}
	\label{fig:penguins}
\end{figure}

A similar analysis is reproduced on the Palmer Archipelago (Antarctica) penguin data \citep{palmerpenguins}, with the goal of predicting the flipper length for female penguins. The dataset includes three species of penguins (Adelie, Chinstrap, Gentoo) with many ties ($n = 165$, $k = 41$). The prior guess is $P_0 = N(100, 10)$, which provides a significant bias towards lower values of the flipper length. As shown in Figure~\ref{fig:penguins}, fixing the same value of the hyperparameters $(\alpha, \alpha_0)$ could incorrectly suggest that the gamma-gamma hCRV gives too much weight to the prior with respect to the HDP (left plot). This effect is mitigated by fixing the same correlation but different variance (middle plot) and disappears by fixing the same variance and correlation (right plot). Under this fair model comparison, the gamma-gamma hCRV borrows slightly more information than the HDP. This behavior is the same observed for the simulated data, and can be ascribed to the information contained in the common concentration parameter.


\section{Discussion}

This work introduces normalized hierarchical completely random vectors (hCRVs), a new way of building dependent priors that combines the naturalness of hierarchical structures with a convenient posterior representation due to multivariate infinite divisibility. On the one hand, they provide an alternative sampling strategy for existing models, such as the hierarchical Dirichlet process \citep{Teh2006}; on the other hand, they represent a general recipe for constructing dependent priors with a convenient posterior representation. As such, normalized hCRVs enter the flourishing context of partially exchangeable models, which includes, among others, dependent stick-breaking constructions \citep{MacEachern1999, MacEachern2000, DunsonPark2008, Horiguchi2025}, additive random measures \citep{Mueller2004, lijoi2014bayesian}, nested structures \citep{Rodriguez2008, CamerlenghiDunson2019, Beraha2021, Lijoi2023}, L\'evy copulas \citep{EpifaniLijoi2010, RivaPalacio2018}, compound random measures \citep{GriffinLeisen2017}, dependent Pólya trees \citep{ChristensenMa2020},
FuRBI priors \citep{ascolani2024}; see \cite{Quintana2022} for a recent review. 

Future work will explore additional theoretical properties and methodological advantages of normalized hCRVs in different settings and for different families of CRMs. A first natural development is the derivation of the induced random partition through the pEPPF and of the predictive distribution for each group of observations, in the spirit of \cite{Camerlenghi2019}.
Moreover, in the context of multigroup data, the combination of normalized hCRVs with probability kernels in mixture models could lead to a novel class of priors for density estimation and clustering \citep{Wade2025}. This raises interesting questions concerning the contraction rate to the true densities for partially exchangeable models \citep{Catalano2022}, the behavior of the mixing measure \citep{Nguyen2016}, and that of the clustering structure \citep{Ascolani2022}. Additionally, normalized hCRVs could be employed in stochastic block models for multi-layer network data \citep{Durante2025},
and, in the one-dimensional setting ($d = 1$), they could represent a viable alternative as priors beyond Gibbs type \citep{Camerlenghi2018} or as building blocks for multiview clustering \citep{Dombowsky2025}. Finally, from a numerical perspective, it would be of interest to evaluate the performance of hCRVs in more structured application domains, such as topic modelling, genomics, or sequential data, in order to assess the regimes in which the table-free representation yields the largest practical gains in terms of mixing, scalability, and computational cost. 


\section*{Funding}

Claudio Del Sole was partially supported by the European Union - Next Generation EU PRIN-PNRR (project P2022H5WZ9).

\singlespacing
\bibliographystyle{chicago}
\bibliography{references}


\clearpage
\onehalfspacing

\appendix

\renewcommand{\thesection}{S\arabic{section}}
\setcounter{section}{0}
\renewcommand{\thesubsection}{S\arabic{section}.\arabic{subsection}}
\setcounter{subsection}{0}
\renewcommand{\theequation}{S\arabic{equation}}
\setcounter{equation}{0}
\renewcommand{\thetheorem}{S\arabic{section}.\arabic{theorem}}
\setcounter{theorem}{0}
\renewcommand{\thefigure}{S\arabic{figure}}
\setcounter{figure}{0}
\renewcommand{\thetable}{S\arabic{table}}
\setcounter{table}{0}

\begin{center}
	\emph{\LARGE Supplementary Material to:} \\[.5\baselineskip]
	{\LARGE Hierarchical Random Measures without Tables}
\end{center}

\bigskip

\begin{abstract}
	\noindent
	Section~\ref{app:background} presents a brief and self-contained account on completely random measures, L\'evy measures, L\'evy intensities, Laplace exponents, and their multivariate extension to completely random vectors. Proofs of the results in the main manuscript are presented in Section~\ref{sec:proofs}. Section~\ref{app:sampling} provides additional technical details on the implementation of posterior sampling algorithms for the normalized gamma-gamma hCRV introduced in Section~\ref{sec:gamma-gamma}, and contains further numerical illustrations supporting their effectiveness. 
	Finally, Section~\ref{sec:elicitation} investigates the impact of the hyperparameters on the borrowing of information and shrinkage induced by the normalized gamma-gamma hCRV, and discusses the proper elicitation of its prior dependence structure.
\end{abstract}

\section{Background on completely random measures}
\label{app:background}

This appendix contains a concise account of completely random measures \citep{Kingman1967} and their multivariate extension to vectors of random measures. In particular, we introduce the key notions of L\'evy measure and Laplace exponent.

Let $(\mathbb{X}, d_{\mathbb{X}})$ be a Polish space endowed with a distance $d_{\mathbb{X}}$. The space $M_{\mathbb{X}}$ of boundedly finite measures on $\mathbb{X}$ is a Borel space with the topology of weak$^\sharp$ convergence \citep{DaleyVereJones2002}. A random measure is a measurable function $\crm: \Omega \to M_{\mathbb{X}}$ from some probability space $\Omega$. 

\begin{definition}
	A random measure $\crm: \Omega \to M_{\mathbb{X}}$ is a {\em completely random measure} ({\rm CRM}) if, given a finite collection of pairwise disjoint and bounded Borel sets $A_1, \cdots, A_k$ of $\X$, the random variables $\crm(A_1), \dots, \crm(A_k)$ are mutually independent.
\end{definition}

\noindent
\cite{Kingman1967} provides a remarkable representation theorem that decomposes any CRM $\crm$ into a unique sum $\crm = \mu_0 + \crm_f + \crm_c$ of three independent components: (i) a deterministic measure $\mu_0$, (ii) a random measure with fixed atoms $\crm_f$, and (iii) an a.s. discrete random measure without fixed atoms $\crm_c$. The use of CRMs as priors in Bayesian nonparametric models usually restricts to the third component. For this reason, unless differently specified, we only focus on this class of CRMs and henceforth assume $\crm = \crm_c$. Moreover, \cite{Kingman1967} shows that for any CRM $\crm$ there exists a Poisson random measure $\tilde N$ on $(0,+\infty) \times \X$ such that, for any Borel set $A$ of $\X$,
\begin{equation}
	\label{eq:poisson}
	\crm(A) = \iint_{(0,\infty) \times A} s \, \ddr \tilde N (s, x).
\end{equation}
The mean measure $\nu$ of the Poisson random measure $\tilde N$ characterizes the law of $\crm$ and is termed the \emph{L\'evy intensity} of the CRM, justifying the notation $\crm \sim {\rm CRM}(\nu)$. This measure on $(0,+\infty) \times \X$ can have infinite mass on sets of the form $(0,\varepsilon) \times A$. However, for every $\varepsilon>0$ and for every bounded Borel set $A$, it must satisfy the following constraints:
\begin{enumerate}
	\item[(a)] the jump component is bounded out of the origin, $\nu((\varepsilon,+\infty) \times A) <+\infty$;
	\item[(b)] the jump component is integrable in the origin, $\iint_{(0,\varepsilon)\times A} s \, \ddr \nu(s,x) <+\infty$;
	\item[(c)] the atom component has no point masses: for every $x \in \mathbb{X}$, $\nu((0,+\infty) \times \{x\}) = 0$.
\end{enumerate}
Details can be found in \citet[Theorem 10.1.III]{DaleyVereJones2007}. The identity \eqref{eq:poisson} is used in \cite{Kingman1967} to derive a L\'evy-Khintchine representation of the Laplace transform
\begin{equation}
	\label{eq:levy-khintchine}
	\log \E(e^{-\lambda \crm (A)})  = - \iint_{(0,+\infty) \times A} (1-e^{-\lambda s}) \, \ddr \nu(s,x),
\end{equation}
for every $\lambda \ge 0$. 
Therefore $\tilde \mu(A)$ has a pure-jump infinitely divisible distribution with L\'evy measure $\ddr \rho_A(s) = \int_{A} \ddr \nu(s, x)$, which we compactly write as  $\tilde \mu(A) \sim {\rm ID}(\rho_A)$; refer to \citet[Theorem 8.1]{Sato1999} for further details. For $A = \mathbb{X}$, the expression in \eqref{eq:levy-khintchine} provides the \emph{Laplace exponent} of the CRM, namely the function $\psi: [0,+\infty) \to [0,+\infty)$ defined as
\begin{equation*}
	\psi(\lambda) = - \log \E(e^{-\lambda \crm (\mathbb{X})}).
\end{equation*}
Remarkably, thanks to the independence of the evaluations on disjoint sets, the Laplace transform characterizes the law of the CRM.
The usual specification of L\'evy intensities is through disintegration
\begin{equation}
	\label{eq:disintegration}
	\ddr \nu(s,x) = \ddr \rho_x(s) \,\ddr P_0(x),
\end{equation}
where $P_0$ is a $\sigma$-finite atomless measure on $\X$ for condition (c), and $\rho_x$ is a $\sigma$-finite measure on $(0,+\infty)$, $P_0$-a.e., such that, for every $\varepsilon>0$,
\begin{equation}
	\label{eq:conditions_levy}
	\text{(a)} \quad \rho_x((\varepsilon,+\infty)) <+\infty, \qquad \text{(b)} \quad \int_{(0,\varepsilon)} s \, \ddr \rho_x(s) <+\infty.
\end{equation}
Note that $P_0$ does not have to be a probability measure nor a measure with finite mass, unless we extend condition (a) to unbounded Borel sets. In this case, we can assume $P_0$ to be a probability measure, and the disintegration \eqref{eq:disintegration} is unique. Conditions \eqref{eq:conditions_levy} imply that $\rho_x$ is a L\'evy measure on $(0,+\infty)$; when $\rho_x$ is absolutely continuous with respect to the Lebesgue measure, we term its Radon-Nikodym derivative \emph{L\'evy density}. In our context, we are interested in two additional conditions. Firstly, we consider L\'evy measures $\rho_x$ that have infinite mass near the origin, that is, such that $\rho_x((0,+\infty)) = \infty$; we term such L\'evy measures \emph{infinitely active}. Secondly, we consider disintegration in product form, which leads to the definition of homogeneous CRM.

\begin{definition}
	Let $\crm \sim {\rm CRM}(\nu)$ such that $\nu$ satisfies \eqref{eq:disintegration}; we refer to $\crm$ as a \emph{homogeneous} CRM if $\rho_x = \rho$, $P_0$-a.e, and write $\crm \sim {\rm CRM}(\rho \otimes P_0)$.
\end{definition}

\noindent
For a homogeneous $\crm \sim {\rm CRM}(\rho \otimes P_0)$ the L\'evy-Khintchine representation in \eqref{eq:levy-khintchine} simplifies consistently. Indeed, under homogeneity and assuming $P_0$ to be a probability measure, the Laplace exponent is 
$$\psi(\lambda) = \int_{(0,+\infty)} (1-e^{-\lambda s}) \, \ddr \rho(s),$$
and $\log \E(e^{-\lambda \crm (A)})  = - P_0(A) \, \psi(\lambda)$, for $\lambda \ge 0$. We observe that $\psi$ is a non-negative and infinitely differentiable function whose derivative is completely monotone. Moreover, it vanishes at 0 and its derivative vanishes at $+\infty$. Thanks to the L\'evy-Khintchine representation of Bernstein functions (see, e.g. \cite{Schilling2012}, Theorem 3.2) any such function is the Laplace exponent of a CRM. 

The two fundamental examples are the gamma CRM and the stable CRM. We recall their definitions, which are used as building blocks throughout the manuscript.

\begin{definition} \label{def:gamma} 
	A random measure $\crm \sim \text{CRM} (\rho \otimes P_0)$ is a gamma CRM of shape $\alpha>0$ and rate $b>0$ if $\rho$ has L\'evy density and Laplace exponent equal to, respectively,
	\begin{equation*}
		\rho(s) = \alpha \frac{e^{-bs}}{s}, \qquad \psi(\lambda) = \alpha \log \bigg(1+\frac{\lambda}{b}\bigg).
	\end{equation*}
\end{definition}

\begin{definition} \label{def:stable} 
	A random measure $\crm \sim \text{CRM} (\rho \otimes P_0)$ is a stable CRM of shape $\alpha>0$ and discount parameter $\sigma \in (0,1)$ if $\rho$ has L\'evy density and Laplace exponent equal to, respectively,
	\begin{equation*}
		\rho(s) = \frac{\alpha \sigma}{\Gamma(1-\sigma)}\frac{1}{s^{1+\sigma}}, \qquad \psi(\lambda) = \alpha \lambda^\sigma.
	\end{equation*}
\end{definition}

The notion of completely random measure can be naturally extended to vectors $\bm{\crm} 
= (\crm_1,\dots,\crm_d)$ of random measures, as in Definition~\ref{def:crv}. Following \cite{Catalano2021}, we term them \emph{completely random vectors} (CRVs). The representation in \eqref{eq:poisson} can be extended to CRVs by considering a multivariate L\'evy intensity $\nu$ on $\Omega_d \times \X$, where $\Omega_d  = [0,+\infty)^d \setminus \{\bm{0}\}$. Similarly, the Laplace transform of the random vector $\bm{\crm}(A)$ is characterized by
\begin{equation*}
	\label{eq:levy-khintchine_crv}
	\log \E(e^{-\bm{\lambda} \cdot \bm{\crm}(A)})  = - \iint_{\Omega_d \times A} (1-e^{-\bm{\lambda} \cdot \bm{s}}) \, \ddr \nu(\bm{s},x),
\end{equation*}
where $\bm{\lambda}= (\lambda_1,\dots,\lambda_d) \in [0,+\infty)^d, \bm{s}= (s_1,\dots,s_d) \in \Omega_d$ and $\cdot$ denotes the scalar product. The multivariate Laplace exponent is defined as
\begin{equation}
	\label{eq:laplace_exponent_crv}
	\psi(\bm{\lambda}) = - \log \E(e^{-\bm{\lambda} \cdot \bm{\crm}(\X)}) = \iint_{\Omega_d \times \mathbb{X}} (1-e^{-\bm{\lambda} \cdot \bm{s}}) \, \ddr \nu(\bm{s},x),
\end{equation}
Consistently with the univariate case, a CRV is homogeneous if $\nu = \rho \otimes P_0$, where $P_0$ is an atomless measure on $\mathbb{X}$ and $\rho$ is $d$-dimensional L\'evy measure on $\Omega_d$ such that 
\begin{equation*}
	\label{eq:conditions_levy_crv}
	\text{(i)} \quad \rho_x((\varepsilon,+\infty)^d) <+\infty, \qquad 
	\text{(ii)} \quad \int_{[0,\varepsilon)^d \setminus \{\bm{0}\}} \|\bm{s}\| \, \ddr \rho_x(\bm{s}) <+\infty.
\end{equation*}
Under homogeneity, if $P_0$ is a probability measure, the multivariate Laplace exponent is 
\begin{equation*}
	\psi(\bm{\lambda}) = \int_{\Omega_d} (1-e^{-\bm{\lambda} \cdot \bm{s}}) \, \ddr \rho(\bm{s}),
\end{equation*}
and $\log \E(e^{-\bm{\lambda} \cdot \bm{\crm}(A)}) = - P_0(A) \, \psi(\bm{\lambda})$, for $\bm{\lambda} \in [0,+\infty)^d$.
Finally, if $\bm{\crm} \sim {\rm CRV}(\nu)$, it easily follows from the definition that the marginal random measures $\crm_i$ are CRMs. Their L\'evy intensities can be obtained by marginalization of $\nu$ as $\ddr \nu_i(s_i,x) = \int_{ \bm{s}_{-i} \in [0,+\infty)^{d-1}} \, \ddr \nu(\bm{s},x)$, where $\bm{s}_{-i} = (s_1,\dots,s_{i-1},s_{i+1},\dots,s_d)$.


\section{Proofs}
\label{sec:proofs}

\allowdisplaybreaks
\subsection{Proof of Theorem~\ref{th:crv}}

\emph{Step 1. Show that $\bm{\crm}$ is a CRV.} 
Let $\bm{\crm}(A) = ( \crm_1(A),\dots, \crm_d(A))$ be the random vector of evaluations of $\bm{\crm}$ on the Borel set $A$ of $\X$. First, we observe that $\bm{\crm}$ takes values in the space of boundedly finite measures. Indeed, by Definition~\ref{def:hcrv}, for every bounded Borel set \(A\) and conditionally on \(\crm_0\),
$$ \crm_i(A)\mid \crm_0 \sim \id(\crm_0(A)\rho), $$
where \(\id(\eta)\) denotes the infinitely divisible distribution with L\'evy measure \(\eta\). Since \(\crm_0\) is a completely random measure, \(\crm_0(A)<+\infty\) almost surely; hence, \(\crm_0(A)\rho\) is a L\'evy measure, and \(\crm_i(A)<+\infty\) a.s. for every \(i=1,\ldots,d\). At this point, we prove that the random vectors $\bm{\crm}(A_1), \dots, \bm{\crm}(A_k)$ are mutually independent, for every $A_1, \cdots, A_k$ mutually disjoint sets of $\X$. Specifically, we show that all linear combinations are mutually independent, that is, for coefficients $\lambda_{ij}>0$, with $i=1,\dots,d$ and $j=1,\dots, k$,
\begin{equation*}
	\E\Big(e^{- \sum_{j=1}^k \sum_{i=1}^d \lambda_{ij} \crm_i(A_j)}\Big) = \prod_{j=1}^k \E\Big(e^{- \sum_{i=1}^d \lambda_{ij} \crm_i(A_j)}\Big).
\end{equation*}
This identity is proved exploiting the following properties:
\begin{itemize}
	\item[(i)] $\crm_1,\dots,\crm_d$ are conditionally independent given $\crm_0$;
	\item[(ii)] since $\crm_i \mid \crm_0$ is a CRM, its evaluations on disjoint sets are independent;
	\item[(iii)] Since $\crm_i(A_j) \mid \crm_0 \sim \id(\crm_0(A_j) \rho)$, then $\mathcal{L}(\crm_i(A_j) \mid \crm_0) = \mathcal{L}(\crm_i(A_j) \mid \crm_0(A_j))$, where $\mathcal{L}(X)$ is the probability law of the random object $X$;
	\item[(iv)] since $\crm_0$ is a CRM, $\crm_0(A_1),\dots, \crm_0(A_k)$ are independent;
	\item[(v)] again, $\crm_1,\dots,\crm_d$ are conditionally independent given $\crm_0$.
\end{itemize}
This entails
\begin{align*}
	\E\Big(e^{- \sum_{j=1}^k \sum_{i=1}^d \lambda_{ij} \crm_i(A_j)}\Big) 
	& \overset{\text{(i)}}{=} \E \bigg(\prod_{i=1}^d \E\Big(e^{- \sum_{j=1}^k \lambda_{ij} \crm_i(A_j)} \mid \crm_0 \Big) \bigg) \\
	& \overset{\text{(ii)}}{=} \E \bigg(\prod_{i=1}^d \prod_{j=1}^k \E\Big(e^{- \lambda_{ij} \crm_i(A_j)}\mid \crm_0\Big)\bigg) \\
	& \overset{\text{(iii)}}{=} \E \bigg(\prod_{i=1}^d \prod_{j=1}^k \E\Big(e^{- \lambda_{ij} \crm_i(A_j)} \mid \crm_0(A_j)\Big)\bigg) \\
	& \overset{\text{(iv)}}{=} \prod_{j=1}^k \E \bigg(\prod_{i=1}^d \E\Big(e^{- \lambda_{ij} \crm_i(A_j)} \mid \crm_0(A_j)\Big)\bigg) \\
	& \overset{\text{(v)}}{=} \prod_{j=1}^k \E\Big(e^{- \sum_{i=1}^d \lambda_{ij} \crm_i(A_j)}\Big).
\end{align*}

\emph{Step 2. Derive the Laplace transform.} 
From the specification in Definition~\ref{def:hcrv}, it follows that $\crm_i(A) \mid \crm_0 \simiid \id(\crm_0(A) \rho)$. Therefore,
\begin{align*}
	\log \E\Big(e^{- \bm{\lambda} \cdot \bm{\crm}(A)}\Big) 
	& = \log \E \Big(\E\Big(e^{- \bm{\lambda} \cdot \bm{\crm}(A)} \mid \crm_0\Big)\Big) 
	= \log \E \bigg( \prod_{i=1}^d e^{- \psi(\lambda_i) \crm_0(A)} \bigg) \\
	& = \log \E \Big( e^{- \crm_0(A) \sum_{i=1}^d \psi(\lambda_i)} \Big) = - P_0(A) \, \psi_0\bigg(\sum_{i=1}^d \psi(\lambda_i)\bigg).
\end{align*}

\emph{Step 3. Determine homogeneity and Laplace exponent.} 
The multivariate L\'evy intensity $\nu_h$ and the Laplace exponent $\psi_h$ of $\bm{\crm}$ are uniquely characterized by the Laplace transform. Given the product form of Step 2, it follows that $\nu_h = \rho_h \otimes P_0$ for some multivariate $\rho_h$ and the Laplace exponent satisfies
\begin{equation*}
	\psi_h (\bm{\lambda}) = \psi_0 \bigg(\sum_{i=1}^d \psi(\lambda_i) \bigg).
\end{equation*}

\emph{Step 4. Determine the L\'evy measure.} 
The L\'evy measure $\rho_h$ is characterized by 
\begin{equation*}
	\psi_h (\bm{\lambda}) = \int_{\Omega_d} (1-e^{-\bm{\lambda} \cdot \bm{s}}) \, \ddr \rho_h (\bm{s}).    
\end{equation*}
We use the notation $\mathscr{L}_{\rho}(\lambda) = \E(e^{-\lambda X}) = e^{-\psi(\lambda)}$, where $\lambda\in[0,+\infty)$ and $X \sim \id(\rho)$, to denote the Laplace transform of an infinitely divisible random variable with L\'evy measure $\rho$ and Laplace exponent $\psi$; note that $\mathscr{L}_{\rho}(\lambda)^t = \mathscr{L}_{t\rho}(\lambda) $. Defining $X_i \simiid \id(t\rho)$, one has
\begin{align*}
	\psi_h (\bm{\lambda}) & = \psi_0 \bigg(\sum_{i=1}^d \psi(\lambda_i) \bigg) \\
	& = \int_0^{+\infty} \Big(1-e^{-\sum_{i=1}^d \psi(\lambda_i)\,t}\Big) \, \ddr \rho_0(t) = \int_0^{+\infty} \bigg(1-\prod_{i=1}^d e^{- \psi(\lambda_i)\,t} \bigg) \, \ddr \rho_0(t) \\
	& = \int_0^{+\infty}  \bigg(1-\prod_{i=1}^d \mathscr{L}_{\rho}(\lambda_i)^t \bigg)  \, \ddr \rho_0(t) = \int_0^{+\infty}  \bigg(1-\prod_{i=1}^d \mathscr{L}_{t\rho}(\lambda_i) \bigg)  \, \ddr \rho_0(t) \\
	& = \int_0^{+\infty} \bigg(1-\prod_{i=1}^d \E_{X_i}\Big(e^{-\lambda_i X_i}\Big)\bigg) \, \ddr \rho_0(t) = \int_0^{+\infty} \E_{ \bm{X} }\Big(1-e^{-\sum_{i=1}^d \lambda_i X_i}\Big) \, \ddr \rho_0(t) \\
	& = \int_0^{+\infty} \int_{[0,+\infty)^d} \Big(1-e^{-\sum_{i=1}^d \lambda_i s_i}\Big) \prod_{i=1}^d \ddr P_{\scriptscriptstyle \id(t \rho)} (s_i)   \, \ddr \rho_0(t) \\
	& = \int_0^{+\infty} \int_{\Omega_d} \Big(1-e^{-\sum_{i=1}^d \lambda_i s_i}\Big) \prod_{i=1}^d \ddr P_{\scriptscriptstyle \id(t \rho)} (s_i)  \, \ddr \rho_0(t) \\
	& = \int_{\Omega_d}  ( 1- e^{-\bm{\lambda}\cdot \bm{s}})  \int_0^{+\infty} \prod_{i=1}^d \ddr P_{\scriptscriptstyle \id(t \rho)} (s_i) \, \ddr \rho_0(t) ,
\end{align*}
by exchanging the order of the integrals thanks to Fubini-Tonelli theorem, and observing that the integrand vanishes in $\bm{s} = \bm{0}$. If $\id(t \rho)$ has density $f_{\scriptscriptstyle \id(t \rho)}$ with respect to the Lebesgue measure, then this is equal to
\begin{equation*}
	\int_{\Omega_d}  ( 1- e^{-\bm{\lambda}\cdot \bm{s}})  \int_0^{+\infty} \prod_{i=1}^d f_{\scriptscriptstyle \id(t \rho)}(s_i)   \, \ddr \rho_0(t) \, \ddr \bm{s}.
\end{equation*}

\subsection{Proof of Remark~\ref{rem:compound}}
\label{proof_remark}

Assume that the L\'evy measure $\rho_{\rm co}$ of a compound random measure coincides with $\rho_h$ in Theorem~\ref{th:crv} for some base L\'evy measure $\rho_0$.
Since $\rho_{\rm co}$, $\rho_h$ and $\rho_0$ are $\sigma$-finite measures, by the uniqueness of the disintegration of measures $\rho_0$-a.e.~it holds that
\begin{equation*}
	\frac{1}{t^d} \, H \bigg(\frac{s_1}{t},\dots,\frac{s_d}{t}\bigg) \ddr \bm{s} = \prod_{i=1}^d \ddr P_{\scriptscriptstyle \id(t \rho)}(s_i).
\end{equation*}
This forces $H$ to be in product form and $\id(t\rho)$ to have a density. Therefore, there exists a density $H_1$ on $[0,+\infty)$ such that
\begin{equation*}
	\frac{1}{t} \, H_1\bigg(\frac{s}{t}\bigg) = f_{\scriptscriptstyle \id(t \rho)}(s), \qquad \text{$\rho_0$-a.e.}
\end{equation*}
Let $X$ be a random variable with density $H_1$; then $tX$ has density $f(s) = H_1(s/t)/t$. From the identity above, $tX$ is an infinitely divisible distribution with L\'evy density $t \rho(s)$. Therefore, $X$ has (strictly) stable distribution \citep[Definition 13.1 and Theorem 14.3]{Sato1999}; in particular, it has stability index $1$, corresponding to a L\'evy density $\rho(s) \propto s^{-2}$. However, this choice of $\rho(s)$ does not satisfy the integrability condition (b) in \eqref{eq:conditions_levy}, and thus is not a valid L\'evy density for a CRM.

\subsection{Proof of Theorem~\ref{th:identifiability}}

\emph{Step 1. Express the condition in terms of Laplace exponents.} 
Since hierarchical CRVs are CRVs, their law is uniquely determined by their multivariate Laplace exponent. 
The $1$-dimensional Laplace exponents $\psi_0^{(1)}$ and $\psi^{(2)}$, associated to $\rho_0^{(1)}$ and $\rho^{(2)}$ respectively, are strictly increasing by definition, which implies that they are invertible. Therefore, from the expression of Theorem~\ref{th:crv}, $\bm{\crm}_1$ and $\bm{\crm}_2$ have the same multivariate Laplace exponent if and only if 
\begin{equation*}
	\sum_{i=1}^d \psi^{(1)} \circ (\psi^{(2)})^{-1} (s_i) = (\psi_0^{(1)})^{-1}\circ \psi_0^{(2)} \bigg(\sum_{i=1}^d s_i \bigg), \qquad P_0^{(1)} = P_0^{(2)},
\end{equation*}
for every $(s_1,\dots,s_d) \in \Omega_d$. Define $f = \psi^{(1)} \circ (\psi^{(2)})^{-1}$ and $g = (\psi_0^{(1)})^{-1}\circ \psi_0^{(2)}$. Since the inverse of a continuous function is continuous, $f,g: (0,+\infty) \to (0,+\infty)$ are continuous functions such that $f(0) = g(0) = 0$ and
\begin{equation} \label{eq:add}
	f (s_1)+\dots + f(s_d) = g (s_1+\dots + s_d).
\end{equation}

\emph{Step 2. Show that $f = g$ and are linear functions.} 
Let $s_1 = s$ and $s_2 = \dots = s_d = 0$. Then, since $f(0) = 0$, we have $g = f$. To prove linearity, we show that $f(s_1 + s_2) = f(s_1) + f(s_2)$ for every $s_1,s_2 >0$ and $f(c s) = c f(s)$ for every $c,s>0$. The first property follows by taking $s_3 = \dots = s_d = 0$ in \eqref{eq:add}. As for the second, we prove it first for $c$ a natural number and then for $c$ a rational number. We conclude by density thanks to the continuity of $f$. For $n \in \mathbb{N}$ natural number, by \eqref{eq:add}, $f(ns) = f(s + \dots + s) = n f(s)$, while for $q = n/m$ rational number, with $n,m \in \mathbb{N}$, $ nf(s) = f(ns) = f(qms) = m f(qs)$, which implies $f(qs) = nf(s)/m = q f(s)$.

\emph{Step 3. Conclusion.} We have proved that there exists $c>0$ such that, for every $s>0$, $$\psi^{(1)} \circ (\psi^{(2)})^{-1} (s)= (\psi_0^{(1)})^{-1}\circ \psi_0^{(2)}(s) = c s.$$ 
Thus for every $s>0$,
$$\psi_0^{(2)}(s) = \psi_0^{(1)} (c s), \qquad  \psi^{(2)}(s) = \frac{1}{c} \, \psi^{(1)} (s), \qquad P_0^{(1)} = P_0^{(2)}, $$
Using a change of variable, this condition can be equivalently expressed in terms of the L\'evy measures as
\begin{equation*}
	\rho_0^{(2)} = c_{\#} \rho_0^{(1)}; \qquad  \rho^{(2)}(s) = \frac{1}{c} \, \rho^{(1)} (s); \qquad P_0^{(1)} = P_0^{(2)}.
\end{equation*}
If the L\'evy measures have densities, this is equivalent to
\begin{equation*}
	\rho_0^{(2)}(s) = \frac{1}{c} \, \rho_0^{(1)} \bigg(\frac{s}{c} \bigg);\qquad  \rho^{(2)}(s) = \frac{1}{c} \, \rho^{(1)} (s); \qquad P_0^{(1)} = P_0^{(2)},
\end{equation*}
which coincides with the statement of the proposition by substituting $c$ with $1/c$.

\subsection{Proof of Lemma~\ref{th:infinite_activity}}

\emph{Step 1. Find the L\'evy measure $\rho_1$ of $\crm_i$.} 
By Theorem~\ref{th:crv}, the multivariate L\'evy measure of $\bm \crm$ is the measure on $\Omega_d$ satisfying
\begin{equation*}
	\ddr \rho_{h}(s_1,\dots, s_d) = \int_0^{+\infty} \prod_{i=1}^d \ddr P_{\scriptscriptstyle \id(t \rho)} (s_i)  \, \ddr \rho_0(t).
\end{equation*}
The marginal L\'evy measure of each $\crm_i$ coincides with the one of $\crm_1$,
that is,
\begin{equation*}
	\ddr \rho_1(s) =  \int_{[0,+\infty)^{d-1}}\int_0^{+\infty} \ddr P_{ \scriptscriptstyle \id(t \rho)}(s) \, \prod_{i=2}^d \ddr P_{\scriptscriptstyle \id(t \rho)} (s_i) \, \ddr \rho_0(t) = \int_0^{+\infty} \ddr P_{\scriptscriptstyle \id(t \rho)}(s)  \, \ddr \rho_0(t),
\end{equation*}
where we have exchanged the order of integration thanks to Fubini-Tonelli's theorem, since measures are positive and $\sigma$-finite.

\emph{Step 2. Show that if $\rho$ and $\rho_0$ are infinitely active, then $\rho_1$ is infinitely active.} 
If $\rho$ is infinitely active, then $\id(t \rho)$ gives zero probability to the origin, thus
\begin{equation*}
	\int_{\Omega_1}\ddr \rho_1(s) = \int_{\Omega_1}\int_0^{+\infty} \ddr P_{ \scriptscriptstyle \id(t \rho)}(s)  \, \ddr \rho_0(t) = \int_0^{+\infty} \, \ddr \rho_0(t),
\end{equation*}
where again we have used Fubini-Tonelli's theorem to exchange the order of the integrals. Since $\rho_0$ is infinitely active, the last integral is $+\infty$.

\emph{Step 3. If $\rho_0$ is finitely active, then $\rho_1$ is finitely active}. 
Since $\id(t \rho)$ is a probability measure on $[0,+\infty)$, its mass on $(0,+\infty)$ is smaller than or equal to 1. Thus, by Fubini-Tonelli,
\begin{equation*}
	\int_{\Omega_1}\ddr \rho_1(s) = \int_0^{+\infty }\bigg( \int_{\Omega_1} \ddr P_{\scriptscriptstyle \id(t \rho)} (s) \bigg) \, \ddr \rho_0(t) \le \int_0^{+\infty} \, \ddr \rho_0(t) <+\infty,
\end{equation*}
where the last inequality is due to the finite activity of $\rho_0$.

\emph{Step 4: If $\rho$ is finitely active, then $\rho_1$ is finitely active.}
If $\rho$ is finitely active with total mass $a$, then $Y \sim \id(t \rho)$ has positive mass in 0, namely $\Prob(Y=0) = e^{-t a}$. This follows by observing that $Y$ is a compound Poisson distribution, and $\Prob(Y=0)$ coincides with the probability mass in $0$ of a Poisson random variable with rate $\lambda = a t$. Thus, by Fubini-Tonelli's theorem,
\begin{equation*}
	\int_{\Omega_1}\ddr \rho_1(s) = \int_0^{+\infty} \int_{\Omega_1} \ddr P_{\scriptscriptstyle \id(t \rho)} (s) \ \ddr \rho_0(t) =  \int_0^{+\infty} (1-e^{-at}) \, \ddr \rho_0(t) = \psi_0(at) <+\infty. 
\end{equation*}

\subsection{Proof of Proposition~\ref{th:hdp_hyper}}

The Dirichlet process arises as a normalization of a gamma CRM \citep{Ferguson1973}, thus
\begin{equation*}
	\frac{\crm_1}{\crm_1(\X)},\dots,\frac{\crm_d}{\crm_d(\X)} \Bigm| \crm_0 \, \simiid \, {\rm DP}( \alpha \crm_0) = {\rm DP}\bigg( \alpha \crm_0(\X) \,\frac{\crm_0}{\crm_0(\X)}\bigg). 
\end{equation*}
For a gamma CRM, the total mass $\crm_0(\X)$ is independent from the normalization \cite[Lemma 1]{Vershik2004}. Moreover, $\crm_0(\X) \sim \text{Gamma}(\alpha_0, b_0)$ and thus $\alpha \crm_0(\X) \sim \text{Gamma}(\alpha_0, b_0/\alpha)$ is independent of $\tilde P_0 = \crm_0 / \crm_0(\X)$.

\subsection{Proof of Proposition~\ref{th:stable}}

We prove that, conditionally on $\tilde \mu_0$, it holds $\crm^{(1)} \eqd c(\crm_0) \, \crm^{(2)}$ for some positive value $c(\crm_0)$ depending on $\tilde \mu_0$. This implies that the normalizations of $\crm^{(1)}$ and $\crm^{(2)}$ are equal in distribution.
Conditionally on $\crm_0$, both $\crm^{(1)}$ and $\crm^{(2)}$ are CRMs, and we can express the condition $\crm^{(1)} \eqd c(\crm_0) \, \crm^{(2)}$ in terms of their conditional L\'evy measures $\rho^{(1)}$ and $\rho^{(2)}$ as
\begin{equation*}
	\rho^{(2)}(s) \otimes \frac{\crm_0}{\crm_0(\X)} \eqd c(\crm_0) \, \rho^{(1)}(c(\tilde \mu_0) s) \otimes \crm_0.
\end{equation*}
Plugging in the expressions for the stable L\'evy measures, we need to find $c(\crm_0)$ such that
\begin{equation*}
	\frac{\alpha \, \sigma}{\Gamma(1-\sigma)} \frac{1}{s^{\sigma+1}} \otimes \frac{\crm_0}{\crm_0(\X)} \eqd \frac{\alpha \, \sigma}{\Gamma(1-\sigma)} \frac{1}{c(\tilde \mu_0)^{\sigma}} \frac{1}{s^{\sigma+1}} \otimes \crm_0.
\end{equation*}
The proof is concluded by choosing $c(\crm_0) = \crm_0(\X)^{1/\sigma}$.

\subsection{Proof of Proposition~\ref{th:moments_norm}}

Firstly, we state and prove a related result concerning the unnormalized random measures. For this purpose, denote by $M_r(\rho)$ the $r$-th moment of a L\'evy measure $\rho$,
\begin{equation*}
	M_r(\rho) = \int_{[0,\infty)} s^r\, \ddr \rho(s).
\end{equation*}

\begin{proposition}
	\label{th:moments}
	Let $\bm{\crm}\sim \mathrm{hCRV}(\rho, \rho_0, P_0)$ and let $A$ be a Borel set s.t. $P_0(A) \neq 0,1$. Then, for every $i \neq j$,
	\begin{gather*}
		\E(\crm_i(A)) = P_0(A) M_1(\rho_0) M_1(\rho), \\ 
		\var(\crm_i(A)) = P_0(A) M_1(\rho_0) M_2(\rho)  + P_0(A) M_2(\rho_0) M_1(\rho)^2, \\
		\cov(\crm_i(A), \crm_j(A)) = P_0(A) M_2(\rho_0) M_1(\rho)^2.
	\end{gather*}
	In particular, for every $i \neq j$,
	\begin{equation*}
		{\rm corr}(\crm_i(A), \crm_j(A)) = \frac{M_2(\rho_0) M_1(\rho)^2}{ M_1(\rho_0) M_2(\rho)  +  M_2(\rho_0) M_1(\rho)^2}.
	\end{equation*}
\end{proposition}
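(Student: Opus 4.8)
The plan is to reduce the statement to two ingredients: the conditional independence built into the hierarchy, and the elementary fact that the first two cumulants of a pure-jump infinitely divisible law $\id(\gamma)$ on $(0,+\infty)$ are $\int_0^{+\infty} s\,\ddr\gamma(s)$ and $\int_0^{+\infty} s^2\,\ddr\gamma(s)$ (obtained by differentiating the L\'evy--Khintchine exponent \eqref{eq:levy-khintchine} at the origin). Conditionally on $\crm_0$ the model makes $\crm_1(A),\dots,\crm_d(A)$ independent with $\crm_i(A)\mid\crm_0\sim\id(\crm_0(A)\,\rho)$, exactly as in the proof of Proposition~\ref{th:crv}; hence $\E(\crm_i(A)\mid\crm_0)=\crm_0(A)\,M_1(\rho)$, $\var(\crm_i(A)\mid\crm_0)=\crm_0(A)\,M_2(\rho)$, and $\cov(\crm_i(A),\crm_j(A)\mid\crm_0)=0$ for $i\ne j$. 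Applying the same cumulant formula to the base CRM, for which $\crm_0(A)\sim\id(P_0(A)\,\rho_0)$, gives $\E(\crm_0(A))=P_0(A)\,M_1(\rho_0)$ and $\var(\crm_0(A))=P_0(A)\,M_2(\rho_0)$.

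With these in hand the three moment identities follow from the tower property. The mean is immediate from the law of total expectation. For the variance I would invoke the law of total variance, $\var(\crm_i(A))=\E\bigl(\var(\crm_i(A)\mid\crm_0)\bigr)+\var\bigl(\E(\crm_i(A)\mid\crm_0)\bigr)$, the first term giving $P_0(A)\,M_1(\rho_0)\,M_2(\rho)$ and the second $M_1(\rho)^2\,\var(\crm_0(A))=P_0(A)\,M_2(\rho_0)\,M_1(\rho)^2$. For the covariance, the law of total covariance together with the vanishing conditional covariance leaves only $\cov\bigl(\E(\crm_i(A)\mid\crm_0),\E(\crm_j(A)\mid\crm_0)\bigr)=M_1(\rho)^2\,\var(\crm_0(A))$, i.e.\ $P_0(A)\,M_2(\rho_0)\,M_1(\rho)^2$. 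Dividing the covariance by the variance produces the displayed correlation.

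The calculation is essentially bookkeeping, so the only genuine point is finiteness: the statement tacitly assumes $M_1(\rho),M_2(\rho),M_1(\rho_0),M_2(\rho_0)<+\infty$. Integrability near the origin of $\int s^r\,\ddr\rho$ and $\int s^r\,\ddr\rho_0$ for $r\ge 1$ is automatic from the L\'evy integrability condition \eqref{eq:conditions_levy}, so the content of the assumption is that both L\'evy measures have finite second moment away from the origin; under it every expectation above is finite and the exchanges of integration are licit. As an aside, an alternative ``infinitely divisible'' derivation would read the variance and covariance directly off the second-order derivatives at $\bm{0}$ of the multivariate Laplace exponent $\psi_h(\bm\lambda)=\psi_0\bigl(\sum_i\psi(\lambda_i)\bigr)$ of Proposition~\ref{th:crv}, via the chain rule; this is the route the authors adopt for the normalized version in Proposition~\ref{th:moments_norm} and it yields the same expressions.
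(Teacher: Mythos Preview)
Your proof is correct and matches the paper's own argument essentially line by line: the paper also conditions on $\crm_0$, uses $\crm_i(A)\mid\crm_0\sim\id(\crm_0(A)\rho)$ together with $\E(X)=M_1(\rho)$, $\var(X)=M_2(\rho)$ for $X\sim\id(\rho)$, and then applies the tower property, law of total variance, and law of total covariance. The paper additionally presents a second derivation reading the moments directly from the joint L\'evy measure $\rho_h$ (your ``aside''), but your hierarchical route is exactly its primary proof.
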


\begin{proof}
	The expressions can be derived (i) through the hierarchical structure using the tower property and the law of total (co)variance, or (ii) using the expression of the moments of (jointly) infinitely divisible distributions in terms of their L\'evy measures. We provide a proof exploiting both techniques. Recall that, by Campbell's theorem, the mean and variance of an infinitely divisible random variable $X \sim \id(\rho)$ satisfy
	\begin{equation*}
		\E(X) = \int_0^{+\infty} s \, \ddr \rho(s) = M_1(\rho), \qquad \var(X) = \int_0^{+\infty} s^2 \, \ddr \rho(s) = M_2(\rho).
	\end{equation*}
	\emph{Proof through hierarchical structure}. 
	Since $\crm_i(A) \mid \crm_0 \sim \id( \crm_0(A) \rho)$, by the tower property, 
	\begin{equation*}
		\E(\crm_i(A)) = \E( \E(\crm_i(A) \mid \crm_0)) = \E \left( \crm_0(A) \int_0^{+\infty} s \, \ddr \rho (s)\right) =  P_0(A) M_1(\rho_0) M_1(\rho).
	\end{equation*}
	Similarly, by the law of total variance,
	\begin{align*}
		\var(\crm_i(A)) & = \E( \var(\crm_i(A) \mid \crm_0)) + \var( \E( \crm(A) \mid \crm_0)) \\
		& = \E \bigg(  \crm_0(A) \int_0^{+\infty} s^2 \, \ddr \rho (s) \bigg) + \var \bigg( \crm_0(A) \int_0^{+\infty} s \, \ddr \rho (s)\bigg) \\
		& = P_0(A) M_1(\rho_0) M_2(\rho)  + P_0(A) M_2(\rho_0) M_1(\rho)^2. 
	\end{align*}
	Finally, by the law of total covariance and thanks to the conditional independence of the random measures $\crm_i$ and $\crm_j$ for $i\neq j$, given $\crm_0$,
	\begin{align*}
		\cov(\crm_i(A), \crm_j(A)) & = \cov ( \E(\crm_i(A) \mid \crm_0), \E(\crm_j(A) \mid \crm_0)) \\
		& = \var \bigg( \crm_0(A) \int_0^{+\infty} s \, \ddr \rho (s)\bigg) = P_0(A) M_2(\rho_0) M_1(\rho)^2.
	\end{align*}
	\emph{Proof through joint infinite divisibility}. 
	Since $\crm_i$ is a CRM, by considering its L\'evy measure in Theorem~\ref
	{th:crv}, \cut
	\begin{align*}
		\E(\crm_i(A)) & = P_0(A) \int_0^{+\infty} s \int_0^{+\infty} \ddr P_{ \scriptscriptstyle \id(t \rho)}(s) \, \ddr \rho_0(t) \\
		& = P_0(A)  \int_0^{+\infty} \int_0^{+\infty} s \, \ddr P_{\scriptscriptstyle \id(t \rho)} (s) \, \ddr \rho_0(t) \\
		& = P_0(A) \int_0^{+\infty} \E_{X \sim \id(t \rho)} (X) \, \ddr \rho_0(t) \\
		& = P_0(A) M_1(\rho) \int_0^{+\infty} t \, \ddr \rho_0(t) = P_0(A) M_1(\rho) M_1(\rho_0).
	\end{align*}
	Similarly, for the variance,
	\begin{align*}
		\var(\crm_i(A)) & = P_0(A) \int_0^{+\infty} s^2 \int_0^{+\infty} \ddr P_{\scriptscriptstyle \id(t \rho)}(s) \, \ddr \rho_0(t) \\
		& = P_0(A)  \int_0^{+\infty} \int_0^{+\infty} s^2 \, \ddr P_{\scriptscriptstyle \id(t \rho)}(s) \, \ddr \rho_0(t) \\
		& = P_0(A) \int_0^{+\infty} \E_{X \sim \id(t \rho)} (X^2) \, \ddr \rho_0(t) \\
		& = P_0(A) \int_0^{+\infty} (\var_{X \sim \id(t \rho)}(X) + \E_{X \sim \id(t \rho)}(X)^2) \, \ddr \rho_0(t) \\
		& = P_0(A) \int_0^{+\infty} (t M_2(\rho) + t^2 M_1(\rho)^2) \, \ddr \rho_0(t) \\
		& = P_0(A) M_2(\rho) M_1(\rho_0) + P_0(A) M_1(\rho)^2 M_2(\rho_0).
	\end{align*}
	For the covariance, we observe that, for any $(X_1,X_2) \sim \id(\rho)$ jointly infinitely divisible random variables with a multivariate L\'evy measure $\rho$, it holds $\cov(X_1,X_2) = \int_{\Omega_2} s_1 s_2 \, \ddr \rho(s_1,s_2)$; see, e.g., \citet[25.8]{Sato1999}. Thus,
	\begin{align*}
		\cov(\crm_i(A), \crm_j(A)) 
		& = P_0(A) \int_{\Omega_2} s_1 s_2 \, \ddr \rho_{h}(s_1,s_2) \\
		& = P_0(A) \int_0^{+\infty} \int_{\Omega_2} s_1 s_2 \, \ddr P_{\scriptscriptstyle \id(t \rho)}(s_1) \, \ddr P_{\scriptscriptstyle \id(t \rho)}(s_2) \, \ddr \rho_0(t) \\
		& = P_0(A) \int_0^{+\infty} \E_{X \sim \id(t \rho)}(X)^2 \, \ddr \rho_0(t) \\
		& = M_1(\rho) ^2  \int_0^{+\infty} t^2 \ddr \rho_0(t) = P_0(A) M_1(\rho) ^2 M_2(\rho_0). \tag*{\qedhere}
	\end{align*}
\end{proof}

We now prove Proposition~\ref{th:moments_norm}. Similarly to the proof of Proposition~\ref{th:moments}, we could derive the desired results in two ways: (i) exploiting the hierarchical structure, or (ii) using the fact that we are normalizing a CRV. When focusing on the normalization, exploiting the CRV structure is particularly convenient for deriving the mean and variance, whereas using the hierarchical structure brings to a straightforward calculation for the covariance. The univariate results in \cite{James2006} show that if $\crm \sim \CRM(\rho \otimes P_0)$, where $P_0$ is a probability measure and $\psi$ denotes its Laplace exponent, the normalization $\tilde P = \crm/\crm(\X)$ satisfies
\begin{equation*}
	\E(\tilde P(A)) = P_0(A), \qquad \var(\tilde P(A)) = - P_0(A)(1-P_0(A)) \int_0^{+\infty} u e^{-\psi(u)} \psi'' (u) \, \ddr u.
\end{equation*}
Since $\tilde P_i(A)$ is the normalization of $\crm_i \sim \CRM( \rho_h \otimes P_0)$, this immediately implies that $\E(\tilde P_i(A)) = P_0(A)$. Moreover, by considering the Laplace exponent of $\crm_i$ in Theorem~\ref
{th:crv},
\begin{equation*}
	\var(\tilde P_i(A)) = - P_0(A)(1-P_0(A)) \int_0^{+\infty} u e^{-\psi_0(\psi(u))} (\psi_0 \circ \psi)'' (u) \, \ddr u.
\end{equation*}
By the law of total covariance and the conditional independence of the random probability measures $\tilde P_i$ and $\tilde P_j$ for $i\neq j$, given $\crm_0$,
\begin{align*}
	\cov(\tilde P_i(A), \tilde P_j(A)) & = \cov ( \E(\tilde P_i(A) \mid \crm_0), \E(\tilde P_j(A) \mid \crm_0)) \\
	& = \var \bigg(\frac{\crm_0(A)}{\crm_0(\X)}\bigg) = - P_0(A)(1-P_0(A)) \int_0^{+\infty} u e^{-\psi_0(u)} \psi_0'' (u) \, \ddr u.
\end{align*}

\subsection{Proof of Theorem~\ref{th:posterior_crv}}

Since the random probabilities $\crm_i/\crm_i(\X)$ are a.s. discrete with random atoms from a continuous distribution, the model for the observations $\bm{X}_{1:d} \mid \bm{\crm}$ is non-dominated. Therefore, we cannot rely on Bayes theorem to find the posterior distribution, and should use an alternative strategy, based on the multivariate Laplace functional. This proof can be seen as the multivariate extension of the proof in \cite{James2009}. The multivariate Laplace functional
\begin{equation*}
	\E \exp \left( - \sum_{i=1}^d \int f_i \,\ddr \crm_i \right)
\end{equation*}
is defined for any non-negative measurable functions $f_1,\dots f_d$ and characterizes the law of any vector of random measures. In particular, when $\bm{\crm} \sim {\rm CRV}(\nu)$, it satisfies
\begin{equation}
	\label{laplace_functional_crv}
	\log \E \exp \left( - \sum_{i=1}^d \int f_i \,\ddr \crm_i \right) = - \int_{\Omega_d \times \X} (1-e^{- \sum_{i=1}^d s_i f_i(x)}) \, \ddr \nu(\bm{s},x).
\end{equation}
Our goal is to find an appropriate expression of $\E( e^{- \sum_{i=1}^d \int f_i \,\ddr \crm_i} \mid \bm{X}_{1:d} = \bm{x}_{1:d})$, for any non-negative measurable functions $f_1,\dots f_d$. Throughout the proof, we exploit the following properties of the conditional expectation, which hold for any random variable $X,Y$ and for any Borel set $A$ such that $\Prob(Y \in A) \ge 0$:
\begin{itemize}
	\item[(a)] conditional expectation w.r.t.~events: $\E(X \mid Y \in A) = \E(X \mathbbm{1}_A(Y))/\Prob(Y \in A)$;
	\item[(b)] tower property: $\E(X) = \E( \E(X\mid Y))$ and $\Prob(A) = \E(\mathbbm{1}_A) = \E( \Prob(A \mid Y))$.
\end{itemize} 

\smallskip
\emph{Step 1. Express the conditional expectation in terms of events.} 
By the dominated convergence theorem and the exchangeability of the observations
\begin{equation*}
	\E\Big( e^{- \sum_{i=1}^d \int f_i \ddr \crm_i } \mid \bm{X}_{1:d} = \bm{x}_{1:d}\Big) = \lim_{\varepsilon \to 0} \, \E \bigg( e^{- \sum_{i=1}^d \int f_i \ddr \crm_i } \Bigm| \bm{X}_{1:d} \in \prod_{i=1}^d \prod_{j=1}^k B_{\varepsilon}^{n_{ij}}(x_j^*) \bigg),
\end{equation*}
where $\bm{x}^* = (x_1^*,\dots,x_k^*)$ are the unique values in the observations, with multiplicities $n_{i1},\dots,n_{ik}$ for each group $i = 1, \dots, d$; see Section~\ref{sec:posterior}. Here, $B_{\varepsilon}(x) = \{ \omega: d(x,\omega) \le \varepsilon\}$ denotes the ball of radius $\varepsilon > 0$ centered in $x$, and we use $B^{m}_{\varepsilon}(x) = B_{\varepsilon}(x) \times \dots \times B_{\varepsilon}(x)$ for their $m$-cartesian product. Without loss of generality, we always consider $\varepsilon$ sufficiently small for the balls $\{B_{\varepsilon}(x_j^*)\}_j$ to be pairwise disjoint.

\emph{Step 2. Condition with respect to events.}
Using property (a),
\begin{multline*}
	\E\Big( e^{- \sum_{i=1}^d \int f_i \,\ddr \crm_i } \mid \bm{X}_{1:d} = \bm{x}_{1:d}\Big) \\
	= \lim_{\varepsilon \to 0} \, \frac{\displaystyle \E \Big(e^{- \sum_{i=1}^d \int f_i \,\ddr \crm_i } \, \mathbbm{1}_{\prod_{i=1}^d \prod_{j=1}^k B_{\varepsilon}^{n_{ij}}(x_j^*)} (\bm{X}_{1:d}) \Big)}{\Prob \big(\bm{X}_{1:d} \in \prod_{i=1}^d \prod_{j=1}^k B_{\varepsilon}^{n_{ij}}(x_j^*)\big)} =: \lim_{\varepsilon \to 0} \frac{N_\varepsilon(\bm{x}^*)}{D_\varepsilon(\bm{x}^*)},
\end{multline*}
where we have introduced the notation $N_\varepsilon(\bm{x}^*)$ for the numerator and $D_\varepsilon(\bm{x}^*)$ for the denominator. In the next steps (3--6) we show that both numerator and denominator decrease at the same speed as $\varepsilon \to 0$, namely
\begin{align*}
	N_\varepsilon(\bm{x}^*) &= C_N \prod_{j=1}^k P_0 (B_{\varepsilon}(x_j^*)) + o\bigg(\prod_{j=1}^k P_0 (B_{\varepsilon}(x_j^*)) \bigg), \\
	D_\varepsilon(\bm{x}^*) &= C_D \prod_{j=1}^k P_0 (B_{\varepsilon}(x_j^*)) + o\bigg(\prod_{j=1}^k P_0 (B_{\varepsilon}(x_j^*)) \bigg),
\end{align*}
for some constants $C_N,C_D >0$, where $P_0$ is the diffuse base probability of the CRV. It then follows that the limit above coincides with $C_N/C_D$.
Since the denominator is a special case of the numerator when $f_1=\dots = f_d =0$, we focus on finding $C_N$ and then specialize the result to $C_D$.

\emph{Step 3. Express the numerator in terms of $\bm{\crm}$.} 
By the tower property (b),
\begin{align*}
	N_\varepsilon(\bm{x}^*) & = \E \bigg( e^{- \sum_{i=1}^d \int f_i \, \ddr \crm_i } \, \Prob \bigg( \bm{X}_{1:d} \in \prod_{i=1}^d \prod_{j=1}^k B_{\varepsilon}^{n_{ij}}(x_j^*) \Bigm| \bm{\crm} \bigg) \bigg) \\
	& = \E \bigg( e^{- \sum_{i=1}^d \int f_i \, \ddr \crm_i } \prod_{i=1}^d \prod_{j=1}^k \frac{\crm_i(B_{\varepsilon}(x_j^*))^{n_{ij}}}{\crm(\X)^{n_{ij}}} \bigg) \\
	& = \E \bigg( \prod_{i=1}^d \frac{1}{\crm_i(\X)^{n_i}} \, e^{-\int f_i \, \ddr \crm_i} \prod_{j=1}^k \crm_i(B_{\varepsilon}(x_j^*))^{n_{ij}} \bigg).
\end{align*}

\emph{Step 4. Use the gamma trick to separate the integrand into independent components.} 
Using the density of a gamma with shape $n_i$ and rate $\crm_i(\X)$, we rewrite
\begin{equation*}
	\frac{1}{\crm_i(\X)^{n_i}} = \frac{1}{\Gamma(n_i)} \int_0^{+\infty} u_i^{n_i-1}e^{-\crm_i(\X) u_i} \, \ddr u_i.
\end{equation*}
Henceforth, we adopt the compact notation $B_j = B_\varepsilon(x_j^*)$ and $B_0 = \X \setminus \{B_1 \sqcup \dots \sqcup B_k\}$, with the convention $n_0 = n_{i0} = 0$ for $i=1,\dots,d$. Using Fubini-Tonelli's theorem and the independence property of a CRV on disjoint set-wise evaluations,
\begin{align*}
	N_\varepsilon(\bm{x}^*) 
	& = \E \bigg( \prod_{i=1}^d \frac{1}{\Gamma(n_i)} \int_0^{+\infty} u_i^{n_i-1} \prod_{j=0}^k e^{- \crm_i(B_j) u_i} \, \ddr u_i \prod_{j=0}^k e^{-\int_{B_j} f_i \, \ddr \crm_i} \crm_i (B_j)^{n_{ij}} \bigg)  \\
	& = \frac{1}{ \prod_{i=1}^d\Gamma(n_i)} \int_{(0,+\infty)^d} \bigg(\prod_{i=1}^d u_i^{-1} \bigg) \, n_\varepsilon(u;\bm{x}^*) \, \ddr \bm{u}, 
\end{align*}
where
\begin{equation*}
	n_\varepsilon(u;\bm{x}^*) = \prod_{j=0}^k \E\bigg( \prod_{i=1}^d e^{-\int_{B_j} (f_i(x) +u_i) \, \ddr \crm_i(x)} (u_i\,\crm_i(B_j))^{n_{ij}} \bigg).
\end{equation*}
We now study the asymptotic behaviour of the quantity $n_\varepsilon(u;\bm{x}^*)$.

\emph{Step 5. Express the integrand $n_\varepsilon(\bm{x}^*)$ in terms of the derivative of the multivariate Laplace functional.} 
Let $\eta_1, \dots, \eta_d \ge 1$ be auxiliary quantities, such that $n_\varepsilon(u;\bm{x}^*)$ can be written as
\begin{align*}
	n_\varepsilon(u;\bm{x}^*) & = \prod_{j=0}^k \E\bigg( \prod_{i=1}^d \lim_{\eta_i \to 1^+} e^{-\int_{B_j} (f_i(x) + \eta_i u_i) \, \ddr \crm_i(x)} (u_i\,\crm_i(B_j))^{n_{ij}} \bigg) \\
	& = \lim_{\bm{\eta} \to 1^+} \prod_{j=0}^k \E\bigg( \prod_{i=1}^d e^{-\int_{B_j} (f_i(x) + \eta_i u_i) \, \ddr \crm_i(x)} (u_i\,\crm_i(B_j))^{n_{ij}} \bigg),
\end{align*}
where $\bm{\eta} \to 1^+$ is a compact notation for $\eta_i \to 1^+$ for each $i = 1, \dots, d$, and we have exchanged limit and expectation by monotone convergence theorem.
We observe that, for some $\ell \in \{1, \dots, d\}$,
\begin{equation*}
	\frac{\partial}{\partial \eta_\ell} \prod_{i=1}^d e^{-\int_{B_j} (f_i(x) + \eta_i u_i) \, \ddr \crm_i(x)} = -u_\ell\,\crm_\ell(B_j) \prod_{i=1}^d e^{-\int_{B_j} (f_i(x) + \eta_i u_i) \, \ddr \crm_i(x)}.
\end{equation*}
This formula can be applied recursively, using the convention $d^0/du^0 = \text{Id}$, for $\text{Id}$ the identity function. Specifically,
\begin{equation*}
	n_\varepsilon(u;\bm{x}^*) = \lim_{\bm{\eta} \to 1^+}  \prod_{j=0}^k (-1)^{n_{\bullet j}} \, \E \bigg( \frac{\partial^{n_{\bullet j}}}{\partial \eta_1^{n_{1j}}\dots \partial \eta_d^{n_{dj}}} \prod_{i=1}^d e^{-\int_{B_j} (f_i(x) + \eta_i u_i) \, \ddr \crm_i(x)} \bigg),
\end{equation*}
where $n_{\bullet j} = n_{1j} + \dots + n_{dj}$. 
Since $\eta_i \ge 1$ and $f_i \ge 0$, the derivative is bounded above by the product 
\begin{equation*}
	\prod_{i=1}^d (u_i \, \crm_i(B_j))^{n_{ij}} \exp (-u_i \crm_i(B_j)),
\end{equation*}
which has finite mean since the exponential decay of $\exp (- \sum_{i=1}^d u_i \crm_i(B_j))$ is not compromised by the slower polynomial growth of  $\prod_{i=1}^d (u_i \, \crm_i(B_j))^{n_{ij}}$.
Therefore, the derivative is uniformly integrable  in $\eta_1, \dots, \eta_d$ and we can exchange derivative and expectation. Remarkably, we need to introduce $\bm{\eta}$ and cannot derive the expressions directly with respect to $\bm{u}$: otherwise, the derivative would not be uniformly integrable for CRMs with unbounded moment measures, such as the $\sigma$-stable CRM. Using the expression \eqref{eq:laplace_exponent_crv} of the multivariate Laplace functional of a CRV, we obtain
\begin{align*}
	n_\varepsilon(u;\bm{x}^*) & = \lim_{\bm{\eta} \to 1^+} \prod_{j=0}^k (-1)^{n_{\bullet j}} \, \frac{\partial^{n_{\bullet j}}}{\partial \eta_1^{n_{1j}} \dots \partial \eta_d^{n_{dj}}} \, \E \bigg( e^{- \sum_{i=1}^d \int_{B_j} (f_i(x) + \eta_i u_i) \,\ddr \crm_i(x)} \bigg) \\
	& = \lim_{\bm{\eta} \to 1^+} \prod_{j=0}^k (-1)^{n_{\bullet j}} \, \frac{\partial ^{n_{\bullet j}}}{\partial \eta_1^{n_{1j}}\dots \partial \eta_d^{n_{dj}}} \, \exp\bigg(- \int_{\Omega_d \times B_j} \big(1-e^{- \bm{s} \cdot (\bm{f}(x) + \bm{\eta u})}\big) \, \ddr \nu(\bm{s}, x)\bigg),
\end{align*}
where we have used the notation 
$\bm{f}(x) = (f_1(x),\dots, f_d(x))$ and $\bm{\eta u} = (\eta_1 u_1, \dots, \eta_d u_d)$. For $j=0,\dots, k$, define the function
\begin{equation*}
	g_j(\bm{\eta}, \bm{u}) = - \int_{\Omega_d \times B_j} \big(1-e^{- \bm{s} \cdot (\bm{f}(x) + \bm{\eta u})} \big) \, \ddr \nu(\bm{s}, x).
\end{equation*}

\emph{Step 6. Determine the asymptotic behaviour of the partial derivatives of $e^{-g_j}$.} 
For $j=1,\dots, k$, the partial derivatives of $g_j$ satisfy, as $\varepsilon \to 0$,
\begin{multline*}
	\frac{\partial ^{n_{\bullet j}}}{\partial \eta_1^{n_{1j}} \dots \partial \eta_d^{n_{dj}}}  \,g_j(\bm{\eta}, \bm{u})= \\
	\begin{aligned}
		& = (-1)^{n_{\bullet j}} \bigg(\prod_{i=1}^d u_i^{n_{ij}} \bigg) \int_{\Omega_d \times B_j} \prod_{i=1}^d s_i^{n_{ij}} e^{- \bm{s} \cdot (\bm{f}(x) + \bm{\eta u}) } \, \ddr \nu(\bm{s}, x) \\
		& = (-1)^{n_{\bullet j}} \bigg(\prod_{i=1}^d u_i^{n_{ij}} \bigg) \int_{\Omega_d} \prod_{i=1}^d s_i^{n_{ij}} e^{- \bm{s} \cdot (\bm{f}(x_j^*) + \bm{\eta u}) } \ddr \rho_{x_j^*}(\bm{s}) \, P_0(B_j) + o(P_0(B_j)).
	\end{aligned}
\end{multline*}
The first line comes from uniform integrability of the derivative, which allows exchanging the integral and derivative operators; the second line follows from Lebesgue differentiation theorem for atomless measures on Polish spaces. 

The multivariate Fa\`a di Bruno formula \citep{ConstantineSavits1996} allows to express multiple partial derivatives of a function. We introduce the notation as in \cite{Hardy2006}, who observed that partial derivatives of the type $\partial^{n_{\bullet}}/\partial \eta_1^{n_{1}}\dots \partial \eta_d^{n_{d}}$ can be treated as \emph{maximally mixed} partial derivatives of the type $\partial^{n_{\bullet}}/ \partial v_1 \cdot \dots \partial v_{n_{\bullet}}$ by allowing for ties among the variables, which brings to more compact expressions. In particular, define  $v_1 = \dots = v_{n_{1j}} = \eta_1$, and $v_{\sum_{i'=1}^{i-1} n_{i'j}+1} = \dots =  v_{\sum_{i'=1}^{i} n_{i'j}} = \eta_i$ for $i=2,\dots,d$, so that,
\begin{equation}
	\label{eq:faa}
	(-1)^{n_{\bullet j}} \frac{\partial^{n_{\bullet j}}}{\partial \eta_1^{n_{1j}}\dots \partial \eta_d^{n_{dj}}} \, e^{-g_j(\bm{\eta}, \bm{u})} = e^{-g(\bm{\eta}, \bm{u})} \sum_{\pi} \prod _{A \in \pi} \frac{\partial^{|A|}}{ \prod_{i \in A} \partial v_i} g_j(\bm{\eta}, \bm{u}),
\end{equation}
where the sum is over all partitions $\pi$ of the numbers $\{1,\dots,n_{\bullet j} \}$. From this expression, one can derive a formula in terms of $\eta_1,\dots,\eta_d$ through appropriate combinatorial coefficients, retrieving the one in \cite{ConstantineSavits1996}. However, in our case, the combinatorial formulation above is not necessary, as we are only interested in the asymptotic behaviour as $\varepsilon \to 0$. From the previous discussion, all terms $\partial^{|A|}/ (\prod_{i \in A} \partial v_i) g(\bm{\eta}, \bm{u})$ are asymptotically equivalent up to a constant. Hence, from \eqref{eq:faa}, the asymptotically slowest term is the summand corresponding to the partition $\pi$ with a minimal number of sets, that is, $\pi = \{1,\dots, n_{\bullet j}\}$. Therefore, as $\varepsilon \to 0$, \eqref{eq:faa} reads
\begin{multline*}
	(-1)^{n_{\bullet j}} \frac{\partial^{n_{\bullet j}}}{\partial \eta_1^{n_{1j}}\dots \partial \eta_d^{n_{dj}}} \, e^{-g_j(\bm{\eta}, \bm{u})} 
	= \exp\bigg(- \int_{\Omega_d \times B_j} (1- e^{- \bm{s} \cdot (\bm{f}(x) + \bm{\eta u}) }) \,\ddr \nu(\bm{s},x) \bigg) \\
	\times \bigg(\prod_{i=1}^d u_i^{n_{ij}} \bigg) \int_{\Omega_d} \prod_{i=1}^d s_i^{n_{ij}} e^{- \bm{s}\cdot  (\bm{f}(x_j^*) + \bm{\eta u}) } \ddr \rho_{x_j^*}(\bm{s}) \, P_0(B_j) + o(P_0(B_j)).
\end{multline*}

\emph{Step 7. Determine the asymptotic behaviour of $n_\varepsilon(\bm{x}^*)$.} 
Considering the term $e^{-g_0}$, which does not vanish as $\varepsilon \to 0$, we obtain that $n_\varepsilon(u; \bm{x}^*)$ is asymptotically equal to
\begin{multline*}
	n_\varepsilon(\bm{u}; \bm{x}^*) = \exp\bigg(- \int_{\Omega_d \times \X} (1- e^{- \bm{s} \cdot (\bm{f}(x) + \bm{u}) }) \,\ddr \nu(\bm{s},x) \bigg) \\ 
	\times \bigg(\prod_{i=1}^d u_i^{n_{i}} \bigg) \prod_{j=1}^k \int_{\Omega_d} \prod_{i=1}^d s_i^{n_{ij}} e^{- \bm{s}\cdot  (\bm{f}(x_j^*) + \bm{u}) } \ddr \rho_{x_j^*}(\bm{s}) \prod_{j=1}^k P_0(B_j) + o \bigg( \prod_{j=1}^k P_0(B_j) \bigg),
\end{multline*}
where we have computed the limit for $\bm{\eta} \to 1^+$ under the integration using monotone convergence.

\emph{Step 8. Determine the asymptotic behaviour of the numerator $N_\varepsilon(\bm{x}^*)$.} 
From the relation between $n_\varepsilon(u; \bm{x}^*)$ and $N_\varepsilon(\bm{x}^*)$ in Step 4, by monotone convergence theorem, we have that $N_\varepsilon(\bm{x}^*) = C_N \prod_{j=1}^kP_0(B_j) + o( \prod_{j=1}^k P_0(B_j))$, where $C_N$ equals
\begin{multline*}
	\frac{1}{ \prod_{i=1}^d\Gamma(n_i)} \int_{(0,+\infty)^d} \bigg(\prod_{i=1}^d u_i^{n_{i}-1} \bigg) \, \exp\bigg(- \int_{\Omega_d \times \X} (1- e^{- \bm{s} \cdot (\bm{f}(x) + \bm{u}) }) \,\ddr \nu(\bm{s},x) \bigg) \\
	\times \prod_{j=1}^k \bigg( \int_{\Omega_d} \prod_{i=1}^d s_i^{n_{ij}} e^{- \bm{s}\cdot  (\bm{f}(x_j^*) + \bm{u}) } \ddr \rho_{x_j^*}(\bm{s}) \bigg) \, \ddr \bm{u}.
\end{multline*}

\newpage
\emph{Step 9. Determine the expression of the Laplace functional a posteriori.} 
By specializing the formula in Step 8 for $C_N$ to $f = 0$, we determine the value of $C_D$, which can be conveniently expressed in terms of the multivariate Laplace exponent $\psi$ and cumulants $\tau_{n_{1j},\dots, n_{dj} \mid x_j^*}$ as
\begin{equation*}
	C_D = \frac{1}{ \prod_{i=1}^d\Gamma(n_i)} \int_{(0,+\infty)^d} \bigg(\prod_{i=1}^d u_i^{n_{i}-1} \bigg) \, e^{- \psi(\bm{u})} \prod_{j=1}^k \tau_{n_{1j},\dots, n_{dj}|x_j^*}(\bm{u}) \, \ddr \bm{u}.
\end{equation*}
It follows that the Laplace functional a posteriori is equal to
\begin{multline*}
	\qquad \E\Big( e^{- \sum_{i=1}^d \int f_i \ddr \crm_i} \mid \bm{X}_{1:d} = \bm{x}_{1:d}\Big) \\
	= \frac{ \int_{(0,+\infty)^d} \big(\prod_{i=1}^d u_i^{n_{i}-1} \big) \, e^{- \int_\X \psi(\bm{f}(x) + \bm{u})\,\ddr P_0(x)} \, \ddr \bm{u}}{ \int_{(0,+\infty)^d} \big(\prod_{i=1}^d u_i^{n_{i}-1} \big) \, e^{- \psi(\bm{u})} \prod_{j=1}^k \tau_{n_{1j},\dots, n_{dj}|x_j^*}(\bm{u}) \, \ddr \bm{u}}. \qquad
\end{multline*} 

\emph{Step 10. Interpret the denominator as the normalizing constant of latent random variables $U_1,\dots, U_d$.} 
The multivariate Laplace exponent $\psi(\bm{u})$ can be retrieve at the numerator by multiplying and dividing by $e^{-\psi(\bm{u})}$. Indeed,
\begin{align*}
	\int_{\Omega_d \times \X}  (1-e^{- \bm{s} \cdot (\bm{f}(x) + \bm{u}) }) \, \ddr \nu(\bm{s}, x) + \psi(\bm{u}) & = \int_{\Omega_d \times \X}  (e^{- \bm{s} \cdot \bm{u}} - e^{-\bm{s} \cdot  (\bm{f}(x) + \bm{u}) }) \, \ddr \nu(\bm{s}, x) \\
	& = \int_{\Omega_d \times \X}  (1-e^{- \bm{s} \bm{f}(x) }) e^{-\bm{s} \cdot \bm{u}} \, \ddr \nu(\bm{s}, x).
\end{align*}
Therefore, the posterior Laplace functional is rewritten as
\begin{multline*}
	\E\Big( e^{- \sum_{i=1}^d \int f_i \ddr \crm_i} \mid \bm{X}_{1:d} = \bm{x}_{1:d}\Big) \\
	= \E \bigg(  e^{- \int_{\Omega_d \times \X}  (1-e^{- \bm{s} \bm{f}(x) }) e^{-\bm{s} \cdot \bm{U}} \ddr \nu(\bm{s}, x)} \prod_{j=1}^k \frac{ \int_{\Omega_d} \prod_{i=1}^d   s_i^{n_{ij}} e^{- \bm{s} \cdot  (\bm{f}(x_j^*) + \bm{U}) } \ddr \rho_{x_j^*}(\bm{s})}{\tau_{n_{1j},\dots, n_{dj}|x_j^*}(\bm{U})} \bigg),
\end{multline*}
where $\bm{U} = (U_1,\dots,U_d)$ is a vector of random variables with joint p.d.f.
\begin{equation*}
	f_{\scriptscriptstyle \bm{U}}(\bm{u}) \propto \prod_{i=1}^d u_i^{n_i-1} e^{- \psi(\bm{u})} \prod_{j=1}^k \tau_{n_{1j},\dots, n_{dj}|x_j^*}(\bm{u}).
\end{equation*}
This implies that there exist latent variables $\bm{U} = (U_1,\dots,U_d)$ such that the posterior Laplace functional of $\bm{\crm}$, conditionally on $\bm{U}$, satisfies
\begin{equation*}
	\E \Big( e^{- \sum_{i=1}^d \int f_i \ddr \crm_i } \mid \bm{X}_{1:d} = \bm{x}_{1:d}, \bm{U} \Big) =
	\E \Big(e^{- \sum_{i=1}^d \int f_i \ddr \crm_i^* } \mid \bm{U} \Big) \prod_{j=1}^k\E \Big(e^{-\bm{f}(x_j^*) \cdot \bm{J}_j} \mid \bm{U}\Big),
\end{equation*} 
where, conditionally on $\bm{U}$, the measure $\bm{\crm}^*$ is a CRV with L\'evy intensity $e^{-\bm{s} \cdot \bm{U}} \ddr \nu(\bm{s}, x)$ and $\bm{J}_j$ is a vector of jumps with distribution \cut
\begin{equation*}
	\ddr P_{\bm{J}_j \mid \bm{U}}(\bm{s}) \propto \prod_{i=1}^d s_i^{n_{ij}} e^{- \bm{s}\cdot \bm{U} } \ddr \rho_{x_j^*}(\bm{s}).
\end{equation*} 
If we define the random elements $\bm{\crm}^*$ and $\bm{J}_j$ to be conditionally independent given $\bm{U}$, the Laplace functional of their sum is the product of their Laplace functions, and thus
\begin{equation*}
	\E \Big( e^{- \sum_{i=1}^d \int f_i \ddr \crm_i } \mid \bm{X}_{1:d} = \bm{x}_{1:d}, \bm{U} \Big) = \E \Big( e^{ - \sum_{i=1}^d \int f_i \ddr (\crm_i^* + \sum_{j=1}^{k} J_{ij} \delta_{x_j^*})} \mid \bm{U} \Big).
\end{equation*} 
By uniqueness of the Laplace functional, this implies that 
\begin{equation*}
	\mathcal{L}( \bm{\crm} \mid \bm{X}_{1:d} = \bm{x}_{1:d}) =  \mathcal{L}\bigg(\bm{\crm}^* + \sum_{j=1}^k \bm{J}_j \delta_{x^*_j}\bigg).
\end{equation*} 

\subsection{Proof of Proposition~\ref{th:posterior_hcrv}}

We first prove a preliminary Lemma on the exponential tilting of a L\'evy measure.

\begin{lemma}
	\label{th:exponential_tilting}
	Let $\rho$ be a L\'evy measure on $(0,\infty)$ with Laplace exponent $\psi$ such that $\id(\rho)$ has a p.d.f. denoted by $f_{\scriptscriptstyle \id (\rho)}$. For $u > 0$, define $\ddr \rho_u(s) = e^{-us} \ddr \rho(s)$ the exponential tilting of $\rho$. Then for $s,t,u >0$, 
	\begin{equation*}
		e^{-us} f_{\scriptscriptstyle \id (t \rho)}(s) = e^{-t \psi(u)} f_{\scriptscriptstyle \id (t \rho_u )}(s).
	\end{equation*}
\end{lemma}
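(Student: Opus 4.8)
The plan is to recognise the claimed identity as the statement that the \emph{Esscher transform} (exponential tilting) of the infinitely divisible law $\id(t\rho)$ by the weight $e^{-us}$ is again infinitely divisible, with L\'evy measure $t\rho_u$, and to verify this via Laplace transforms. First I would check that $\rho_u$ is a legitimate L\'evy measure: since $0<e^{-us}\le 1$ we have $\rho_u\le\rho$ as measures, so conditions C1--C2 pass from $\rho$ to $\rho_u$; moreover $\rho_u$ has infinite mass near the origin if and only if $\rho$ does, so $\id(t\rho_u)$ admits a p.d.f.\ whenever $\id(t\rho)$ does, consistently with the standing assumption and \citet[Theorem~27.7]{Sato1999}. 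I would also record that the Laplace exponent $\psi(\lambda)=\int_0^\infty(1-e^{-\lambda s})\,\ddr\rho(s)$ is finite for every $\lambda\ge 0$, because $1-e^{-\lambda s}\le\min(1,\lambda s)$ and $\int(s\wedge 1)\,\ddr\rho(s)<\infty$; the same holds for the Laplace exponent $\psi_u$ of $\rho_u$.

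The key algebraic fact is the relation between the two Laplace exponents: for every $\lambda\ge 0$,
\[
\psi(\lambda+u)-\psi(u)=\int_0^\infty\big(e^{-us}-e^{-(\lambda+u)s}\big)\,\ddr\rho(s)=\int_0^\infty(1-e^{-\lambda s})\,e^{-us}\,\ddr\rho(s)=\psi_u(\lambda).
\]
Then, letting $X\sim\id(t\rho)$, I would introduce the measure $\ddr\mu_u(s)=e^{t\psi(u)}\,e^{-us}\,f_{\scriptscriptstyle\id(t\rho)}(s)\,\ddr s$ on $(0,\infty)$. Its total mass is $e^{t\psi(u)}\,\E(e^{-uX})=e^{t\psi(u)}e^{-t\psi(u)}=1$, so $\mu_u$ is a probability measure, and for every $\lambda\ge 0$,
\[
\int_0^\infty e^{-\lambda s}\,\ddr\mu_u(s)=e^{t\psi(u)}\,\E\big(e^{-(\lambda+u)X}\big)=e^{t\psi(u)}e^{-t\psi(\lambda+u)}=e^{-t\psi_u(\lambda)},
\]
which is exactly the Laplace transform of $\id(t\rho_u)$.

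To conclude, I would invoke uniqueness of the Laplace transform of a probability measure on $[0,\infty)$ to deduce $\mu_u=\id(t\rho_u)$; since $\mu_u$ is absolutely continuous by construction, its density coincides a.e.\ with $f_{\scriptscriptstyle\id(t\rho_u)}$, i.e.\ $e^{t\psi(u)}\,e^{-us}\,f_{\scriptscriptstyle\id(t\rho)}(s)=f_{\scriptscriptstyle\id(t\rho_u)}(s)$, which rearranges to the stated identity. The argument is essentially routine once cast in this form; the only point requiring a little care is the bookkeeping on L\'evy measures and densities --- ensuring $\rho_u$ is again an (infinitely active) L\'evy measure so that $f_{\scriptscriptstyle\id(t\rho_u)}$ is well defined and all Laplace exponents involved are finite --- after which the identification $\mu_u=\id(t\rho_u)$ is immediate.
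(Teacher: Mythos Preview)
Your proposal is correct and follows essentially the same route as the paper: both arguments hinge on the identity $\psi_u(\lambda)=\psi(\lambda+u)-\psi(u)$ and conclude by uniqueness of the Laplace transform. The only difference is cosmetic --- you define the tilted probability $\mu_u$ and identify it as $\id(t\rho_u)$, while the paper starts from $\id(t\rho_u)$ and computes its Laplace transform; your version is slightly more careful in verifying that $\rho_u$ is again an infinitely active L\'evy measure so that $f_{\scriptscriptstyle\id(t\rho_u)}$ exists.
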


\begin{proof}
	Let $X \sim \id (t \rho)$ and let $X_u \sim \id(t \rho_u)$. By the uniqueness of the Laplace transform, it is enough to show that, for every $\lambda>0$,
	\begin{align*}
		\E\big(e^{-\lambda X_u}\big) & = \int_0^\infty e^{-\lambda s} f_{\scriptscriptstyle \id (t \rho_u )}(s) \,\ddr s \\
		& = e^{t \psi(u)} \int_0^\infty e^{-(\lambda+u)\,s} f_{\scriptscriptstyle \id (t \rho)}(s) \,\ddr s = e^{t \psi(u)} \, \E\big(e^{-(\lambda+u) X}\big).
	\end{align*}
	Indeed, the Laplace transform of $X_u$ is equal to
	\begin{align*}
		\E\big(e^{-\lambda X_u}\big) & = \int_0^\infty e^{-\lambda s} f_{\scriptscriptstyle \id (t \rho_u )}(s) \,\ddr s = e^{-\int_0^{+\infty}(1 - e^{-\lambda s}) \, t e^{-us} \, \ddr \rho(s)} \\
		& = e^{- t \int_0^{+\infty}(e^{-us} - e^{-(\lambda +u) s}) \ddr \rho(s)}
		= e^{t \psi(u) - t \psi(\lambda+u)} \\
		& = e^{t \psi(u)} \int_0^\infty e^{-(\lambda+u)\,s} f_{\scriptscriptstyle \id (t \rho)}(s) \,\ddr s = e^{t \psi(u)} \, \E\big(e^{-(\lambda+u) X}\big). \tag*{\qedhere}
	\end{align*}
\end{proof}

Considering $\bm{\crm} \sim {\rm hCRV}(\rho, \rho_0, P_0)$, the expression of the L\'evy intensity in Theorem~\ref{th:posterior_crv}(i) is given by
$$ \ddr \nu^*_{\scriptscriptstyle U}(\bm{s}, x) = e^{-\bm{U} \cdot \bm{s}} \ddr \nu_h(\bm{s},x) = e^{-\bm{U} \cdot \bm{s}} \rho_h(\bm{s}) \, \ddr \bm{s} \, \ddr P_0(x),$$
where we have used that ID$(t\rho)$ has a p.d.f.~on $(0,+\infty)$.
Exploiting Lemma~\ref{th:exponential_tilting} for the exponential tilting, we obtain
\begin{align*}
	e^{-\bm{U} \cdot \bm{s}} \rho_h(\bm{s}) & = \int_0^{+\infty} \prod_{i=1}^d \big(e^{-U_i s_i} f_{\scriptscriptstyle \id(t \rho)}(s_i)\big) \, \rho_0(t)\, \ddr t \\
	& = \int_0^{+\infty} \prod_{i=1}^d \big( e^{-t \psi(U_i)} f_{\scriptscriptstyle \id (t \rho_{U_i})}(s_i) \big) \, \rho_0(t) \, \ddr t \\
	& = \int_0^{+\infty} \prod_{i=1}^d f_{\scriptscriptstyle \id (t \rho_{U_i} )}(s_i) \,  e^{-t \sum_{i=1}^d \psi(U_i)} \rho_0(t) \, \ddr t.
\end{align*}
In analogy with Theorem~\ref{th:crv}, this expression can be interpreted as the Laplace exponent of a hierarchical CRV with heterogeneous marginal distributions, characterized by the  L\'evy measures
\begin{equation*}
	\ddr \rho_i^*(s) = \ddr \rho_{\scriptscriptstyle U_i}(s) = e^{-U_i\,s} \rho(s) \,\ddr s, \qquad \ddr \rho_0^*(t) = e^{-t \sum_{i=1}^d \psi(U_i)} \rho_0(t) \,\ddr t.
\end{equation*}

\subsection{Proof of Proposition~\ref{prop:jumps}}

Since $\rho$ and $\rho_0$ have L\'evy densities, the distribution of the vector of jumps $\bm{J}_j$ in \eqref{eq:jumps}, given the latent variables $\bm{U}$, has p.d.f.~proportional to 
\begin{align*}
	\prod_{i=1}^d s_i^{n_{ij}} \, e^{-\bm{U} \cdot \bm{s}} \rho_{h}(\bm{s}) & =  \prod_{i=1}^d s_i^{n_{ij}} e^{-U_i\,s_i} \int_0^{+\infty}  \prod_{i=1}^d f_{\scriptscriptstyle \id (t \rho)}(s_i) \, \rho_0(t) \, \ddr t \\
	& = \int_0^{+\infty} \prod_{i=1}^d s_i^{n_{ij}} e^{-U_i s_i} f_{\scriptscriptstyle \id (t \rho)}(s_i) \, \rho_0(t) \, \ddr t \\
	& = \int_0^{+\infty} \prod_{i=1}^d \frac{s_i^{n_{ij}} e^{-U_i s_i} f_{\scriptscriptstyle \id (t \rho)}(s_i)}{\bar \tau_{n_{ij}} (U_i, t)} \prod_{i=1}^d \bar \tau_{n_{ij}} (U_i, t) \, \rho_0(t) \, \ddr t,
\end{align*}
where $\bar \tau_{m} (u, t)$ is defined in \eqref{eq:tau_bar}.
Therefore, the distribution of $\bm{J}_j \mid \bm{U}$ is a mixture of conditionally independent random variables $J_{1j}, \dots, J_{dj}$ with densities
\begin{equation*}
	f_{\scriptscriptstyle J_{ij} \mid U_i, J_{0j}}(s) = \frac{s^{n_{ij}} e^{-U_i s} f_{\scriptscriptstyle \id (J_{0j} \rho)}(s)}{\bar \tau_{n_{ij}} (U_i, J_{0j})}, \qquad i = 1, \dots, d,
\end{equation*}
given a mixing random variable $J_{0j}$ having p.d.f. proportional to $\prod_{i=1}^d \bar \tau_{n_{ij}} (U_i, t) \, \rho_0(t)$.

\subsection{Proof of Lemma~\ref{prop:cumulants}}

Substituting the expression for $\rho_h$ as in the proof of Proposition~\ref{prop:jumps}, we use Fubini-Tonelli Theorem as in Remark~\ref{rem:fubini-tonelli} to obtain
\begin{align*}
	\tau_{\bm{m}}(\bm{u}) & = \int_{\Omega_d} \prod_{i=1}^d s_i^{m_i} e^{-\bm{u} \cdot \bm{s}} \rho_{h}(\bm{s}) \, \ddr {\bm s}
	= \int_0^{+\infty} \int_{\Omega_d} \prod_{i=1}^d s_i^{m_i} e^{-u_i s_i} f_{\scriptscriptstyle \id (t \rho)}(s_i) \, \ddr {\bm s} \, \rho_0(t) \, \ddr t \\
	& = \int_0^{+\infty} \prod_{i=1}^d \int_0^{+\infty} s_i^{m_i} e^{-u_i s_i} f_{\scriptscriptstyle \id (t \rho)}(s_i) \, \ddr s_i \, \rho_0(t) \, \ddr t
	= \int_0^{+\infty} \prod_{i=1}^d \bar \tau_{m_i} (u_i, t) \rho_0(t) \, \ddr t.
\end{align*}

\subsection{Proof of Proposition~\ref{prop:latent}}

Starting from the expression of the distribution of the latent variables $\bm{U}$ in \eqref{eq:latent}, specialized for $\bm{\crm} \sim {\rm hCRV}(\rho, \rho_0, P_0)$,
\begin{align*}
	f_{\scriptscriptstyle \bm U}(\bm{u}) & \propto \prod_{i=1}^d u_i^{n_i-1} e^{- \psi_0(\sum_{i=1}^d \psi(u_i))} \prod_{j=1}^k \tau_{n_{1j},\dots, n_{dj}}(\bm{u}) \\
	& = \prod_{i=1}^d u_i^{n_i-1} e^{- \psi_0(\sum_{i=1}^d \psi(u_i))} \prod_{j=1}^k \int_{0}^{+\infty} \prod_{i=1}^d \bar \tau_{m_i} (u_i, t_j) \rho_0(t_j) \, \ddr t_j,
\end{align*}
where we have substituted the expression of $\tau_{\bm{m}}(\bm{u})$ obtained in Lemma~\ref{prop:cumulants}. For each $j = 1, \dots, k$, exploiting the definition of $\bar \tau_{m} (u, t)$ in \eqref{eq:tau_bar}, we obtain
\begin{align*}
	\int_{0}^{+\infty} \prod_{i=1}^d \bar \tau_{m_i} (u_i, t_j) \rho_0(t_j) \, \ddr t_j & = \int_{0}^{+\infty} \prod_{i=1}^d \int_0^\infty s_{ij}^{n_{ij}} e^{-u_i\,s_{ij}} f_{\scriptscriptstyle {\rm ID}(t_j\rho)}(s_{ij}) \, \ddr s_{ij} \, \rho_0(t_j) \, \ddr t_j \\
	& = \int_{\Omega_d} \prod_{i=1}^d e^{-u_i\,s_{ij}} \int_{0}^{+\infty} \prod_{i=1}^d s_{ij}^{n_{ij}}  f_{\scriptscriptstyle {\rm ID}(t_j\rho)}(s_{ij}) \, \rho_0(t_j) \, \ddr t_j \, \ddr \bm{s}_{\bullet j},
\end{align*}
where we have exchanged the integrals thanks to Fubini-Tonelli's theorem. Recall that, conditionally on $\crm_0$, the distribution of $\crm_i(\X) \sim {\rm ID}(\crm_0(\X) \rho)$, while at the root of the hierarchy $\crm_0(\X) \sim {\rm ID}(\rho_0)$. Therefore, exploiting the definition of multivariate Laplace exponent,
\begin{align*}
	\prod_{i=1}^d u_i^{n_i-1} e^{- \psi_0(\sum_{i=1}^d \psi(u_i))} & = \prod_{i=1}^d u_i^{n_i-1} \E\big( e^{- {\bm u} \cdot {\bm \crm}(\X)} \big) \\
	& = \int_{\Omega_d} \prod_{i=1}^d u_i^{n_i-1} e^{- u_i y_i} \int_{0}^\infty \prod_{i=1}^d f_{\scriptscriptstyle {\rm ID}(z \rho)}(y_i) \, f_{\scriptscriptstyle {\rm ID}(\rho_0)}(z) \, \ddr z.
\end{align*}
In conclusion, the density of the vector of latent variables $\bm{U}$ is proportional to
\begin{multline*}
	f_{\bm{U}}(\bm{u}) \propto \int_{\Omega_d^{k+1}} \prod_{i=1}^d u_i^{n_i-1} e^{- u_i \, (s_{i0} + s_{i1} + \cdots + s_{ik})} \\
	\times \int_{(0,+\infty)^{k+1}} \prod_{i=1}^d f_{\scriptscriptstyle {\rm ID}(t_0 \rho)}(s_{i0}) \, f_{\scriptscriptstyle {\rm ID}(\rho_0)}(t_0) \prod_{j=1}^k s_{ij}^{n_{ij}} f_{\scriptscriptstyle {\rm ID}(t_j\rho)}(s_{ij}) \, \rho_0(t_j) \, \ddr \bm{t} \, \ddr \bm{s},
\end{multline*}
where we have renamed integration variables $y_i$ as $s_{i0}$ and $z$ as $t_0$.
Therefore, conditionally on a vector $\bm{\beta} = (\beta_1, \dots, \beta_d)$ of dependent random variables, the latent variables $U_1, \dots, U_d$ are independent and gamma distributed, with $U_i \sim {\rm Gamma}(n_i, \beta_i)$, for $i = 1, \dots, d$.
Moreover, each $\beta_i = S_{i0} + S_{i1} + \cdots + S_{ik}$, where the joint density of $\bm{S} = (S_{ij})_{ij}$ is proportional to
\begin{multline*}
	f_{\scriptscriptstyle \bm{S}}(\bm{s}) \propto \prod_{i=1}^d (s_{i0} + s_{i1} + \cdots + s_{ik})^{-n_i} \\
	\times \int_{(0,+\infty)^{k+1}} \prod_{i=1}^d f_{\scriptscriptstyle {\rm ID}(t_0 \rho)}(s_{i0}) \, f_{\scriptscriptstyle {\rm ID}(\rho_0)}(t_0) \prod_{j=1}^k s_{ij}^{n_{ij}} f_{\scriptscriptstyle {\rm ID}(t_j \rho)}(s_{ij}) \, \rho_0(t_j) \, \ddr \bm{t} \, \ddr \bm{s}.
\end{multline*}
This entails that, conditionally on $\bm{T} = (T_0, \dots, T_k)$,  $\bm{S}_i = (S_{i0}, S_{i1}, \cdots, S_{ik})$ are independent for each $i = 1, \dots, d$, with density proportional to
\begin{equation*}
	f_{\scriptscriptstyle \bm{S}_i \mid \bm{T}}(\bm{s}_i) \propto (s_{i0} + s_{i1} + \cdots + s_{ik})^{-n_i} f_{\scriptscriptstyle {\rm ID}(T_0 \rho)}(s_{i0})\, \prod_{j=1}^k s_{ij}^{n_{ij}} f_{\scriptscriptstyle {\rm ID}(T_j \rho)}(s_{ij}).
\end{equation*}
Finally, the density of vector $\bm{T}$ is proportional to
\begin{equation*}
	f_{\scriptscriptstyle \bm{T}}(\bm{t}) \propto \prod_{i=1}^d C(n_{i1}, \dots, n_{ik}; \bm{t}) \, f_{\scriptscriptstyle {\rm ID}(\rho_0)}(t_0) \prod_{j=1}^k \rho_0(t_j),
\end{equation*}
where $C$, defined in \eqref{eq:norm_constant}, 
represents the normalizing constant for the distribution of $\bm{S}_i \mid \bm{T}$.

\subsection{Proof of Proposition~\ref{prop:gamma_jumps}}

From Example~\ref{ex:gamma-gamma}, the gamma-gamma hCRV is characterized by
\begin{equation*}
	\rho(s) = \alpha \, \frac{e^{-bs}}{s}, \qquad \rho_0(s) = \alpha_0 \, \frac{e^{-b_0s}}{s},
\end{equation*}
where $\alpha,\alpha_0>0$ are shape parameters and $b,b_0>0$ rate parameters. The Laplace exponent in Definition~\ref{def:gamma} is $\psi(\lambda) = \alpha \log (1+\lambda/b)$.
The rest of the proof follows from Propositions~\ref{th:posterior_hcrv} and~\ref{prop:jumps}.

\begin{itemize}
	
	\item[(a)] From Proposition~\ref{th:posterior_hcrv}, and substituting the expressions for $\rho$, $\rho_0$ and $\psi$,
	\begin{gather*}
		e^{-U_i s} \rho(s) = \alpha s^{-1} e^{-bs-U_i s} = \alpha s^{-1} e^{-b(1+U_i/b)s}, \\
		e^{-\sum_{i=1}^d \psi(U_i)s} \rho_0(s) = \alpha_0 s^{-1} e^{-b_0s-\alpha \sum_{i=1}^d \log (1+U_i/b) s} = \alpha_0 s^{-1} e^{-\alpha \lambda(\bm{U})s},
	\end{gather*}
	where $\displaystyle \lambda(\bm{U}) = b_0/\alpha + \sum_{i=1}^d \log (1+U_i/b)$.
	
	\item[(b)] From Proposition~\ref{prop:jumps}, for each $j=1,\dots,k$, the jumps $J_{1j},\dots,J_{dj}$ are conditionally independent, given $\bm{U}$ and $J_{0j}$.
	Moreover, from the specification of $\rho$ above, the random variable $\text{ID}(t\rho)$ has gamma distribution with shape parameter $\alpha t$ and rate parameter $b$.
	Hence, for each $i=1,\dots,d$, the jump $J_{ij}$ has density proportional to
	\begin{equation*}
		s^{n_{ij}} e^{-U_is} f_{\scriptscriptstyle \text{ID}(J_{0j} \rho)}(s) \propto s^{\alpha J_{0j} + n_{ij} - 1} e^{-bs - U_i s},
	\end{equation*}
	which is the density of a gamma random variable with shape $\alpha J_{0j} + n_{ij}$ and rate $b+U_i$. The normalizing constant $\bar \tau_{n_{ij}}(U_i,t)$ in \eqref{eq:tau_bar} is
	\begin{equation*}
		\bar \tau_{n_{ij}}(U_i,t) = \int_0^{+\infty} s^{n_{ij}} e^{-U_is} f_{\scriptscriptstyle \text{ID}(t \rho)}(s) \, \ddr s = \frac{b^{\alpha t}}{(b + U_i)^{n_{ij} + \alpha t}} \frac{\Gamma(n_{ij} + \alpha t)}{\Gamma(\alpha t)}.
	\end{equation*}
	
	\item[(c)] Again from Proposition~\ref{prop:jumps}, for $j=1,\dots,k$, the density of $J_{0j}$, given $\bm{U}$, is proportional to \vspace{-.25\baselineskip}
	\begin{align*}
		f_{\scriptscriptstyle J_{0j} \mid \bm{U}}(t) & \propto \prod_{i=1}^d \bar \tau_{n_{ij}}(U_i,t) \,\rho_0(t) \propto \prod_{i=1}^d \left( \left(\frac{1}{1 + U_i/b} \right)^{\alpha t} ((\alpha t))_{n_{ij}} \right) \alpha_0 \, \frac{e^{-b_0\,t}}{t} \, \ddr t \\
		& \propto t^{-1} \, e^{-b_0 t - \alpha t \sum_{i=1}^d \log(1 + U_i/b)} \prod_{i=1}^d ((\alpha t))_{n_{ij}} \propto t^{-1} \, e^{-\alpha \lambda(\bm{U}) t} \prod_{i=1}^d ((\alpha t))_{n_{ij}},
	\end{align*}
	where $((\alpha t))_n = \Gamma(\alpha t+n)/\Gamma(\alpha t)$ denotes the ascending factorial. The result is obtained computing the density of the linear transformation $\alpha J_{0j}$.
	
\end{itemize}

\subsection{Proof of Proposition~\ref{prop:gamma_latent}}

Recall that, from the specification of $\rho$ in Example~\ref{ex:gamma-gamma}, the random variable $\text{ID}(t\rho)$ has gamma distribution with shape parameter $\alpha t$ and rate parameter $b$. The rest of the proof follows from Proposition~\ref{prop:latent}.

\begin{itemize}
	
	\item[(a)] For each $i=1,\dots,d$, the density of $\bm{S}_i = (S_{i0},\dots, S_{ik})$, given $\bm{T} = (T_0,\dots,T_k)$, is proportional to
	\begin{align*}
		f_{\scriptscriptstyle \bm{S}_i \mid {\bm T}}(s_{i0},\dots,s_{ik}) 
		& \propto (s_{i0} + s_{i1} + \dots + s_{ik})^{-n_i} f_{\scriptscriptstyle \id(T_0 \rho)}(s_{i0}) \prod_{j=1}^k s_{ij}^{n_{ij}} f_{\scriptscriptstyle \id(T_j \rho)}(s_{ij}) \\
		& \propto (s_{i0} + s_{i1} + \dots + s_{ik})^{-n_i} s_{i0}^{\alpha T_0-1} e^{-b s_{i0}} \prod_{j=1}^k s_{ij}^{\alpha T_j + n_{ij}-1} e^{-b s_{ij}} \\
		& \propto (s_{i0} + s_{i1} + \dots + s_{ik})^{-n_i} e^{-b (s_{i0} + s_{i1} + \dots + s_{ik})} s_{i0}^{\alpha T_0-1} \prod_{j=1}^k s_{ij}^{\alpha T_j + n_{ij}-1}.
	\end{align*}
	Applying the change of variables $\beta_i = S_{i0}+\cdots+S_{ik}$ and $W_{ij} = S_{ij}/\beta_i$, for $j=0,\dots,k$, the joint density of $\beta_i \ge 0$ and $\bm{W}_i = (W_{i0},\dots,W_{ik}) \in \Delta^k$, where $\Delta^k$ is the $k$-dimensional unit simplex, is
	\begin{equation*}
		f_{\scriptscriptstyle \beta_i,\bm{W}_i\mid {\bm T}}(z_i,\bm{w}_i) \propto z_i^{\alpha (T_0+T_1+\cdots+T_k)-1} e^{-b z_i} \, w_{i0}^{\alpha T_0-1} \prod_{j=1}^k w_{ij}^{\alpha T_j + n_{ij}-1}.
	\end{equation*}
	Therefore $\beta_i = S_{i0}+\cdots+S_{ik}$ is independent from $\bm{W}_i$ and has gamma distribution with shape $\alpha (T_0+\cdots+T_k)$ and rate $b$.
	Moreover, the quantity $C(\bm{m}; \bm{t})$ in \eqref{eq:norm_constant} is given by
	\begin{align*} 
		C(\bm{m}; & \bm{t}) = \int_{(0,+\infty)^{k+1}} (s_{0} + \dots + s_{k})^{-m_{\bullet}} f_{\scriptscriptstyle \id(t_0 \rho)}(s_{0}) \prod_{j=1}^k s_{j}^{m_{j}}  f_{\scriptscriptstyle \id(t_j \rho)}(s_{j}) \ddr \bm{s} \\
		& = \frac{b^{\alpha \sum_{j=0}^k t_j}}{\Gamma(\alpha t_0) \prod_{j=1}^k \Gamma(\alpha t_j)} \int_{(0,+\infty)} z^{\alpha \sum_{j=0}^k t_j -1} e^{-b z} \ddr z \, \int_{\Delta^k} w_{0}^{\alpha t_0-1} \prod_{j=1}^k w_{j}^{\alpha t_j + m_{j}-1} \ddr \bm{w} \\
		& = \frac{\Gamma(\alpha (t_0+\cdots+t_k))}{\Gamma(\alpha (t_0+\cdots+t_k) + m_{\bullet})} \prod_{j=1}^k \frac{\Gamma(\alpha t_j+m_j)}{\Gamma(\alpha t_j)}.
	\end{align*}
	
	\item[(b)] From the specification of $\rho_0$ in Example~\ref{ex:gamma-gamma}, the random variable $\text{ID}(\rho_0)$ has gamma distribution with shape parameter $\alpha_0$ and rate parameter $b_0$. Therefore, the density of $\bm{T} = (T_0,\dots,T_k)$ is proportional to
	\begin{align*}
		f_{\scriptscriptstyle {\bm T}}(\bm{t}) & \propto \prod_{i=1}^d C(n_{i1},\dots,n_{ik}; \bm{t}) f_{\scriptscriptstyle \id(\rho_0)}(t_0) \prod_{j=1}^k \rho_0(t_j) \\
		& \propto \prod_{i=1}^d \left( \frac{\Gamma(\alpha (t_0+\cdots+t_k))}{\Gamma(\alpha (t_0+\cdots+t_k) + n_i)} \prod_{j=1}^k \frac{\Gamma(\alpha t_j+n_{ij})}{\Gamma(\alpha t_j)} \right) t_0^{\alpha_0-1} \, e^{-b_0 t_0} \prod_{j=1}^k t_j^{-1} \, e^{-b_0 t_j} \\
		& \propto \prod_{i=1}^d \left( \frac{1}{((\alpha(t_0+\cdots+t_k)))_{n_i}} \right) t_0^{\alpha_0-1} \, e^{-b_0 t_0} \prod_{j=1}^k t_j^{-1} \, e^{-b_0 t_j} \left( \prod_{i=1}^d ((\alpha t_j))_{n_{ij}} \right),
	\end{align*}
	where $((\alpha t))_n = \Gamma(\alpha t+n)/\Gamma(\alpha t)$ is the ascending factorial. Since we only need the distribution of $\alpha T = \alpha(T_0 +\dots + T_k)$ to sample the random variables $\beta_1, \dots, \beta_d$, we could apply the change of variables
	\begin{equation*}
		\alpha T = \alpha(T_0 +\dots + T_k), \qquad V_{j} = T_{j}/T, \quad j = 0,\dots,k,
	\end{equation*}
	and obtain the joint density of $\alpha T$ and the vector $\bm{V} = (V_0,\dots,V_k) \in \Delta^k$ of auxiliary latent variables, supported on the $k$-dimensional unit simplex $\Delta^k$:
	\begin{equation*}
		f_{\scriptscriptstyle \alpha T,\bm{V}}(t,\bm{v}) \propto t^{\alpha_0-1} e^{-(b_0/\alpha)\,t} \, \prod_{i=1}^d \frac{1}{((t))_{n_i}} \ v_0^{\alpha_0-1} \prod_{j=1}^k \left( v_j^{-1}\prod_{i=1}^d ((t v_j))_{n_{ij}} \right).
	\end{equation*}
	Therefore, the marginal density of $\alpha T$ is
	\begin{equation*}
		f_{\scriptscriptstyle \alpha T}(t) \propto t^{\alpha_0-1} e^{-(b_0/\alpha)\,t} \, \prod_{i=1}^d \frac{1}{((t))_{n_i}} \int_{\Delta^k} v_0^{\alpha_0-1} \, \prod_{j=1}^k \left( v_j^{-1}\prod_{i=1}^d ((t v_j))_{n_{ij}} \right) \ddr \bm{v}.
	\end{equation*}
	Note that this function is integrable in $t$; indeed, it holds that $((s))_q \sim s$ for $s \to 0$ when $q$ is a positive integer. Therefore, for $t \to 0$,
	\begin{align*}
		f_{\scriptscriptstyle \alpha T}(t) & \sim t^{\alpha_0-1} \prod_{i=1}^d \frac{1}{((t))_{n_i}} \int_{\Delta^k} v_0^{\alpha_0-1} \, \prod_{j=1}^k \left( v_j^{-1}\prod_{i=1}^d ((t v_j))_{n_{ij}} \right) \ddr \bm{v} \\
		& \sim t^{\alpha_0-1} \prod_{i=1}^d \left( t^{-1} \prod_{j=1}^k t^{m_{ij}} \right) \sim t^{\alpha_0-1} \prod_{i=1}^d t^{m_{i \bullet}-1} \sim t^{\alpha_0 + m-d-1},
	\end{align*}
	and since we are assuming $n_i > 0$, then $m_{i \bullet} \ge 1$ and therefore $m \ge d$. See also the proof of Proposition~\ref{prop:gamma_baselatent_exact}.
	
\end{itemize}

\subsection{Proof of Proposition~\ref{prop:gamma_basejumps_exact}}

Recall that the ascending factorial $((s))_q$, for integer $q$, can be written as
\begin{equation} \label{eq:ascending_factorial}
	((s))_{q} = \sum_{h=0}^{q} S(q,h)\,s^{h},
\end{equation}
where $S(q,h)$ are the unsigned Stirling numbers of the first kind, defined through the recursive relation $S(q+1,h) = q\,S(q,h) + S(q,h-1)$, with initial conditions $S(0,0) = 1$ and $S(q,0) = S(0,h) = 0$ for $q > 0$ or $h > 0$. 
Since $S(q,0) = 0$ whenever $q>0$, the summation above can start from $1$ if $q$ is strictly positive.

From Proposition~\ref{prop:gamma_jumps}, the density of $\alpha J_{0j}$, for each $j = 1, \dots, k$, can be rewritten as
\begin{align*}
	f_{\scriptscriptstyle \alpha J_{0j} \mid \bm{U}}(t) & \propto t^{-1} \, e^{-\lambda(\bm{U})\,t} \prod_{i=1}^d \left( \sum_{h_{ij}=m_{ij}}^{n_{ij}} S(n_{ij},h_{ij})\,t^{h_{ij}} \right) \\
	& \propto t^{-1} \, e^{-\lambda(\bm{U})\,t} \sum_{h_j=m_{\bullet j}}^{n_{\bullet j}} \sum_{\substack{h_{1j}+\dots+h_{dj} = h_j \\ m_{ij} \le h_{ij} \le n_{ij}}} \prod_{i=1}^d S(n_{ij},h_{ij}) \, t^{h_j} \\
	& \propto \sum_{h_j=m_{\bullet j}}^{n_{\bullet j}} \Bigg( \sum_{\substack{h_{1j}+\dots+h_{dj} = h_j \\ m_{ij} \le h_{ij} \le n_{ij}}} \prod_{i=1}^d S(n_{ij},h_{ij}) \Bigg) \, t^{h_j-1} e^{-\lambda(\bm{U})\,t}
\end{align*}
where $m_{ij} \in \{0,1\}$ is the indicator for $n_{ij} > 0$, that is, $m_{ij} = \min(1,n_{ij})$, and $m_{\bullet j} = \sum_{i=1}^d m_{ij}$. 
Note that since $n_{\bullet j} > 0$, then $m_{\bullet j} > 0$, and the density of $\alpha J_{0j}$ is properly defined.
Moreover, for each $j = 1, \dots, k$, define the coefficients $S(n_{1j},\dots,n_{dj}; h_j)$ where
\begin{equation} \label{eq:multivariate_stirling_supp}
	S(q_{1},\dots,q_{d}; h) = \sum_{\substack{h_{1}+\dots+h_{d} = h \\ 0 \le h_{i} \le q_{i}}} \prod_{i=1}^d S(q_{i},h_{i}).
\end{equation}
Exploiting the recursive relation for unsigned Stirling numbers of the first kind, we obtain 
\begin{align*}
	S(q_{1},\dots, & q_{\ell}+1, \dots,q_{d}; h) = \\
	& = \sum_{\substack{h_{1}+\dots+h_{d} = h \\ 0 \le h_{i} \le q_{i} \\ 0 \le h_\ell \le q_\ell+1}} S(q_\ell+1,h_\ell) \prod_{\substack{i=1 \\ i \neq \ell}}^d S(q_{i},h_{i}) \\
	& = \sum_{\substack{h_{1}+\dots+h_{d} = h \\ 0 \le h_{i} \le q_{i} \\ 0 \le h_\ell \le q_\ell+1}} q_\ell S(q_\ell,h_\ell) \prod_{\substack{i=1 \\ i \neq \ell}}^d S(q_{i},h_{i}) 
	+ \sum_{\substack{h_{1}+\dots+h_{d} = h \\ 0 \le h_{i} \le q_{i} \\ 0 \le h_\ell \le q_\ell+1}} S(q_\ell,h_\ell-1) \prod_{\substack{i=1 \\ i \neq \ell}}^d S(q_{i},h_{i}) \\
	& = q_\ell \sum_{\substack{h_{1}+\dots+h_{d} = h \\ 0 \le h_{i} \le q_{i} \\ 0 \le h_\ell \le q_\ell}}  S(q_\ell,h_\ell) \prod_{\substack{i=1 \\ i \neq \ell}}^d S(q_{i},h_{i}) 
	+ \sum_{\substack{h_{1}+\dots+h_{d} = h-1 \\ 0 \le h_{i} \le q_{i} \\ 0 \le h_\ell \le q_\ell}} S(q_\ell,h_\ell) \prod_{\substack{i=1 \\ i \neq \ell}}^d S(q_{i},h_{i}) \\
	& = q_\ell \,S(q_{1},\dots,q_{d}; h) + S(q_{1},\dots,q_{d}; h-1).
\end{align*}
Notably, in the third line we have used the fact that $S(q_\ell,q_\ell+1) = 0$ (first term) and applied the change of index $h_\ell \mapsto h_\ell+1$ (second term). Therefore, the coefficients above satisfy the recurrence relation in \eqref{eq:multivariate_stirling} defining the multivariate Stirling numbers.

\begin{remark} \label{remark:computation_stirling}
	A naive approach to the evaluation of the coefficients $S(q_{1},\dots,q_{d}; h)$ from the relation \eqref{eq:multivariate_stirling_supp} would involve $d$ nested cycles, with a computational cost of $\mathcal{O}\big( \prod_{i=1}^dn_{ij}\big)$, for each $j = 1, \dots, k$. However, recognizing the recursive structure \eqref{eq:multivariate_stirling} substantially reduces the 
	cost to quadratic in the number of observations $n_{\bullet j}$.
\end{remark}

\subsection{Proof of Proposition~\ref{prop:gamma_baselatent_exact}}

The density of $\alpha T$ in Proposition~\ref{prop:gamma_latent} can be rewritten using the equality in \eqref{eq:ascending_factorial} and the definition of multivariate Stirling numbers in \eqref{eq:multivariate_stirling_supp}:
\begin{align*}
	f_{\scriptscriptstyle \alpha T}&(t) \propto \\
	& \propto t^{\alpha_0-1} e^{-(b_0/\alpha)\,t} \, \prod_{i=1}^d \frac{1}{((t))_{n_i}} \int_{\Delta^k} \hspace{-.25em} v_0^{\alpha_0-1} \prod_{j=1}^k \left( v_j^{-1}\prod_{i=1}^d \left( \sum_{h_{ij}=m_{ij}}^{n_{ij}} S(n_{ij},h_{ij})\,v_j^{h_{ij}} t^{h_{ij}} \right) \right) \ddr \bm{v} \\
	& \propto t^{\alpha_0-1} e^{-(b_0/\alpha)\,t} \,\prod_{i=1}^d \frac{1}{((t))_{n_i}} \int_{\Delta^k} \hspace{-.25em} v_0^{\alpha_0-1} \prod_{j=1}^k \left( \sum_{h_j=m_{\bullet j}}^{n_{\bullet j}} S(\bm{n}_j; h_j) \,v_j^{h_j-1} t^{h_j} \right) \ddr \bm{v} \\
	& \propto t^{\alpha_0-1} e^{-(b_0/\alpha)\,t} \, \prod_{i=1}^d \frac{1}{((t))_{n_i}} \int_{\Delta^k} \hspace{-.25em} v_0^{\alpha_0-1} \sum_{h=m}^{n} \Bigg(\sum_{\substack{h_{1}+\dots+h_{k} = h \\ m_{\bullet j} \le h_{j} \le n_{\bullet j}}} \prod_{j=1}^k S(\bm{n}_j; h_j) \, v_j^{h_j-1} t^{h_j} \Bigg)\, \ddr \bm{v},
\end{align*}
where $\bm{n}_j = (n_{1j},\dots,n_{dj})$.
The density of $\alpha T$ is then obtained computing the integral over the $k$-dimensional simplex:
\begin{align*}
	f_{\scriptscriptstyle \alpha T}&(t) \propto \\
	& \propto t^{\alpha_0-1} e^{-(b_0/\alpha)\,t} \prod_{i=1}^d \frac{1}{((t))_{n_i}} \Bigg(\sum_{h=m}^{n} t^h \hspace{-.25em} \sum_{\substack{h_{1}+\dots+h_{k} = h \\ m_{\bullet j} \le h_{j} \le n_{\bullet j}}} \prod_{j=1}^k S(\bm{n}_j; h_j) \int_{\Delta^k} \hspace{-.25em} v_0^{\alpha_0-1} \prod_{j=1}^k v_j^{h_j-1} \, \ddr \bm{v}\Bigg) \\
	& \propto t^{\alpha_0-1} e^{-(b_0/\alpha)\,t} \prod_{i=1}^d \frac{1}{((t))_{n_i}} \Bigg(\sum_{h=m}^{n} \frac{t^h}{((\alpha_0))_h} \sum_{\substack{h_{1}+\dots+h_{k} = h \\ m_{\bullet j} \le h_{j} \le n_{\bullet j}}} \prod_{j=1}^k \Gamma(h_j)\,S(\bm{n}_j; h_j)\Bigg) \\
	& \propto t^{\alpha_0-1} e^{-(b_0/\alpha)\,t} \prod_{i=1}^d \frac{1}{((t))_{n_i}} \left(\sum_{h=m}^{n} \frac{c_h }{((\alpha_0))_h}\, t^{h} \right),
\end{align*}
where the coefficients $c_h$ for $h = m,\dots,n$ are defined as
\begin{equation*}
	c_h = \sum_{\substack{h_{1}+\dots+h_{k} = h \\ m_{\bullet j} \le h_{j} \le n_{\bullet j}}} \prod_{j=1}^k \Gamma(h_j)\,S(n_{1j},\dots,n_{dj}; h_j).
\end{equation*}


\section{Posterior sampling for the gamma-gamma hCRV}
\label{app:sampling}

This section provides additional details on the practical implementation of posterior sampling algorithms for the normalized gamma-gamma hCRV described in Section~\ref{sec:gamma-gamma}, and contains numerical illustrations supporting their effectiveness. 
Section~\ref{app:mh_sampling} analyzes the Metropolis-Hastings steps of Algorithm~\ref{alg:mcmc}. Specifically, a random walk Metropolis-Hastings scheme on the log-scale with Gaussian increments is proposed and compared with the approach of \cite{Barrios2013}, based on gamma proposals. The initialization of Algorithm~\ref{alg:exact} is detailed in Sections~\ref{app:coefficients_exact} and~\ref{app:ropt}, which respectively discuss the computation of coefficients $c_h$'s in Proposition~\ref{prop:gamma_baselatent_exact} and the optimization of parameter $r$ within the rejection sampling scheme. 
Section~\ref{app:sampling_random_measure} describes the inverse L\'evy measure algorithm to obtain a truncated sample from the hierarchy of gamma CRMs in Proposition~\ref{prop:gamma_jumps}(a). For this purpose, an efficient procedure to sequentially invert the exponential integral function is outlined in Section~\ref{app:expint}. 
This same simulation strategy remains valid \emph{a priori} to sample from gamma-gamma hCRVs.
Section~\ref{app:normalized_jumps} characterizes the posterior random probabilities $\tilde p_i = \crm_i / \crm_i(\X) \mid \bm{X}_{1:d}$ as conditionally normalized gamma CRMs, i.e.~conditionally Dirichlet processes, and thus provides an alternative approach to directly sample their posterior normalized jumps. This allows for a straightforward comparison with standard samplers for the HDP with gamma prior on the concentration parameter, which target the same posterior distributions (Proposition~\ref{th:hdp_hyper}).
The marginal Gibbs sampler of \cite{Teh2006}, based on the restaurant franchise metaphor, is tailored to our setting in Section~\ref{app:tables}. This section also proposes an alternative collapsed Gibbs sampler for the HDP, which directly samples the number of tables serving each dish, and may be of independent interest. Numerical illustrations of the effectiveness of the proposed algorithms in terms of mixing properties and posterior accuracy are provided in Section~\ref{app:illustrations}.
Finally, Section~\ref{app:timesk} extends the simulation study of Section~\ref{sec:simulations} comparing the different algorithms in terms of execution time per effective sample, as the number $k$ of distinct values increases.

\subsection{Metropolis-Hastings steps in Algorithm~\ref{alg:mcmc}}
\label{app:mh_sampling}

The non-standard steps in the posterior sampling algorithms of Section~\ref{sec:algorithms_gamma} 
are:
\begin{itemize}
	\item[(i)] the marginal sampling of random variable $\alpha T$ in \eqref{eq:joint_baselatent_simplex}, whose joint density with the auxiliary vector $\bm{V}$ is known up to a normalizing constant;
	\item[(ii)] the sampling of random variables $\alpha J_{01}, \dots, \alpha J_{0k}$ in \eqref{eq:basejump}, whose densities are again known up to normalizing constants.
\end{itemize}
A natural approach to obtain samples from non-standard densities is resorting to MCMC schemes. In Algorithm~\ref{alg:mcmc}, we consider a blocked Gibbs sampler with Metropolis-Hastings steps. The full conditional distributions are derived from Propositions~\ref{prop:gamma_jumps} and~\ref{prop:gamma_latent}, namely 
\begin{align*}
	(V_j,V_\ell) \mid \bm{V}^{-j,\ell}, (\alpha T) = t & \, \sim \, v_j^{-1} \,v_\ell^{-1} \prod_{i=1}^d ((t v_j))_{n_{ij}} ((t v_\ell))_{n_{i\ell}} \qquad (j, \ell = 1, \dots, k), \\
	(V_j,V_0) \mid \bm{V}^{-j,0}, (\alpha T) = t & \, \sim \, v_0^{\alpha_0-1} \,v_j^{-1}\prod_{i=1}^d ((t v_j))_{n_{ij}} \qquad (j = 1, \dots, k), \\
	(\alpha T) \mid \bm{V} & \, \sim \, t^{\alpha_0-1} \, e^{-(b_0/\alpha)\,t} \prod_{i=1}^d \frac{1}{((t))_{n_i}} \prod_{j=1}^k \prod_{i=1}^d ((t v_j))_{n_{ij}}, \\
	(\alpha J_{0j}) \mid \bm{U} & \, \sim \, t^{-1} \, e^{-\lambda(\bm{U})\,t} \,\prod_{i=1}^d ((t))_{n_{ij}} \qquad (j = 1, \dots, k),
\end{align*}
where $\bm{V}^{-j,\ell}$ denotes the vector $\bm{V} \in \Delta^k$ with components $V_j$ and $V_\ell$ removed. We note that variables in $\bm{V} \in \Delta^k$ are subject to the constraint $\sum_{j=0}^k V_j = 1$, and one cannot update variables independently;
on the other hand, variables $\alpha T$ and $\alpha J_{0j}$ can take every positive value. Throughout the Gibbs sampling procedure, we propose new values for each variable, and accept or reject the proposal according to the Metropolis-Hastings ratio. The rest of the section outlines some of the possible proposals, focusing on rather straightforward and widely known options. Clearly, more sophisticated alternatives can be considered without undermining the validity of our results.

A simple and effective symmetric proposal for the pair of auxiliary variables $(V_j,V_\ell)$, for each $j,\ell = 0, \dots, k$, is
\begin{equation*}
	v_{j}^* = \varepsilon \, (v_j + v_\ell), \qquad v_{\ell}^* = (1-\varepsilon) \, (v_j + v_\ell),
\end{equation*}
where $v_j$ and $v_\ell$ are the current values of $V_j$ and $V_\ell$ and $\varepsilon \sim \mathcal{U}(0,1)$; see e.g.~\cite{smith2014}. The proposal is accepted with log-probability
\begin{equation*}
	\log(r) = \min \left\{0, \ q_j(v^*_j) - q_j(v_j) + q_\ell(v^*_\ell) - q_\ell(v_\ell) \right\},
\end{equation*}
where $q_0(v) = (\alpha_0 - 1) \, v$, and $q_j(v) = \sum_{i=1}^d \log((t v))_{n_{ij}} - v$, for $j = 1, \dots, k$. At each iteration of the Gibbs sampler, we randomly select $k$ pairs of indexes $(j,\ell)$ and implement the Metropolis-Hastings step detailed above. Note that selecting $k$ pairs is a good compromise between choosing a single pair and scanning each of the $k(k-1)/2$ possible pairs.

For the positive variables $\alpha T$ and $\alpha J_{0j}$'s, we consider random walks with two alternative proposals, which we first describe for a generic positive random variable with density $f(x)$.

\begin{itemize}
	\item[(a)] \emph{Gamma proposal}. Following the approach discussed in \cite{Barrios2013}, we propose a new value $x^*$ from a gamma distribution centered at the current value $x$, 
	\begin{equation*}
		x^* \sim \text{Gamma}(\delta, \delta / x),
	\end{equation*}
	where $\delta > 0$ controls the variance of the proposal. The proposed value $x^*$ is accepted with log-probability
	\begin{align*}
		\log(r) & = \min \big\{0, \, \log f(x^*) - \log f(x) + \log g(x; \delta, \delta/x^*) - \log g(x^*; \delta, \delta/x) \big\} \\
		& = \min \left\{ 0, \, \log f(x^*) - \log f(x) + (2\delta-1)(\log x - \log x^*) + \delta \left(\frac{x^*}{x} - \frac{x}{x^*} \right) \right\},
	\end{align*}
	where $x \mapsto \log g(x; a, b)$ is the log-density of a gamma distribution with shape $a > 0$ and rate $b > 0$. For the practical implementation, \cite{Barrios2013} suggest to restrict to $\delta \ge 1$.
	\item[(b)] \emph{Random walk on log-transform}. For a positive random variable, we can target the density of its log-transform, which is $t \mapsto f(e^t)\,e^t$. In this case, we resort to a random walk on the log-scale, and propose a new value $\log x^*$ from a normal distribution centered at the current value $\log x$,
	\begin{equation*}
		\log x^* \sim \mathcal{N}(\log x, \sigma^2),
	\end{equation*}
	where $\sigma^2 > 0$ controls the variance of the proposal. The proposed value $x^*$ is accepted with log-probability
	\begin{equation*}
		\log(r) = \min \big\{0, \, \log f(x^*) - \log f(x) + \log x^* - \log x \big\}.
	\end{equation*}
\end{itemize}

Interestingly, both proposals can be interpreted as multiplicative perturbations of the current value, with different distributions for the perturbation factor. Indeed, they both propose a new value $x^* = x\,\varepsilon$, where $x$ is the current value and $\varepsilon$ is distributed as either (a) a gamma random variable $\varepsilon \sim \text{Gamma}(\delta, \delta)$ with variance $1/\delta$, or (b) a log-normal random variable $\log \varepsilon \sim \mathcal{N}(0, \sigma^2)$ with variance $\sigma^2$ on the log-scale. Therefore, the acceptance rates can be rewritten as
\begin{align*}
	& {\rm (a)} \qquad \log(r) = \min \left\{ 0, \, \log g(x\,\varepsilon) - \log g(x) + \delta (\varepsilon - 1/\varepsilon) - 2\delta\, \log \varepsilon \right\}, \\
	& {\rm (b)} \qquad \log(r) = \min \left\{ 0, \, \log g(x\,\varepsilon) - \log g(x) \right\},
\end{align*}
where $\log g(x) = \log f(x) + \log x$.
The practical implementation of these approaches requires the evaluation of the logarithm of target density at the current and proposed values, up to normalizing constants. For our purposes, we have
\begin{gather*}
	\log f_{ \scriptscriptstyle \alpha T \mid \bm{V}}(t) = (\alpha_0-1) \log t - (b_0/\alpha)\,t + \sum_{j=1}^k \sum_{i=1}^d \log((tv_j))_{n_{ij}} - \sum_{i=1}^d \log((t))_{n_i}, \\
	\log f_{\scriptscriptstyle \alpha J_{0j} \mid \bm{U}}(t) = \sum_{i=1}^d \log((t))_{n_{ij}} - \lambda(\bm{U}) \, t - \log t \qquad (j = 1, \dots, k).
\end{gather*}
Note that the random variables $\alpha J_{0j}$'s are gamma distributed whenever $n_{1j}, \dots, n_{dj} \le 1$, and can thus be sampled exactly as
\begin{equation} \label{eq:exact_jumps}
	\alpha J_{0j} \mid \bm{U} \sim \text{Gamma}\left( \sum_{i=1}^d n_{ij}, \, \lambda(\bm{U}) \right).
\end{equation}
Integrating this feature into the algorithm leads to a substantial reduction in computational times in many scenarios. A crucial aspect of random walk Metropolis-Hastings algorithms is the choice of a suitable variance parameter for the proposal, in order for the resulting Markov chain to mix efficiently \citep{roberts2001}. For this purpose, we resort to adaptive MCMC techniques, which allow to automatically tune the proposals to optimize their performances. Specifically, we implement a simple but effective updating rule, based on the Robbins-Monro recursion \cite[see e.g.][]{andrieu2008}. The explicit updates for the two proposals discussed above are
\begin{equation*}
	{\rm (a)} \quad \log \delta_{s+1} \leftarrow \log \delta_s - \gamma_s (r_s - r^*), \qquad 
	{\rm (b)} \quad \log \sigma^2_{s+1} \leftarrow \log \sigma^2_s + \gamma_s (r_s - r^*),
\end{equation*}
where $\delta_s$ and $\sigma^2_s$ are the parameters of the proposals at step $s$, $r_s$ is the probability of acceptance at step $s$, $r^* = 0.44$ is the target acceptance rate \cite[see][]{roberts1996} and $(\gamma_s)_{s \ge 1}$ is a sequence of positive and non-increasing numbers. In our implementation, we employ the deterministic sequence $\gamma_s = (10 + s)^{-1/2}$, for $s \ge 1$; see \cite{andrieu2008} for a discussion on the desirable properties of such sequence. This adaptive procedure is employed at each iteration during the burn-in phase, and the values of variance parameters reached at the end of the burn-in are kept fixed for the actual posterior sampling. Our implementation allows for different variance parameters for $\alpha T$ and each of the $\alpha J_{0j}$'s.

We conclude this section with a comparison of the gamma and log-normal proposals, carried out through a simple numerical experiment. We consider $1,000$ synthetic datasets, each consisting of a random number of groups $d \sim \text{Poisson}(5)$, with $n_i = 100$ observations per group. Observations are sampled from a hierarchical Dirichlet process with random concentration parameters $\alpha \sim \text{Gamma}(5)$ and $\alpha_0 \sim \text{Gamma}(3)$. We draw $10,000$ posterior samples for each dataset, with a burn-in of $1,000$ steps, during which variance parameters are tuned.
Table~\ref{apptable:ess} reports the effective sample size (ESS) and acceptance rate for each target variable and proposal. Figures for the $\alpha J_{0j}$'s are the means computed over $j$, excluding those indexes for which the exact sampling in \eqref{eq:exact_jumps} is available.
The log-normal proposal outperforms the gamma proposal in terms of ESS for both $\alpha T$ and the $\alpha J_{0j}$'s; the difference is particularly evident for the latter. Moreover, the adaptive MCMC procedure systematically reaches the target acceptance rate of $0.44$. Clearly, the evidence provided by this experiment is not conclusive in any respect, as we are restricting ourselves to a particular algorithm and sampling scenario. However, the synthetic datasets cover a wide range of possible count matrices $(n_{ij})_{ij}$ that one may encounter in applications. Therefore, we recommend using the log-normal proposal, which we also consider for the simulations in Section~\ref{sec:simulations} of the main manuscript.

\begin{table}[]
	\centering
	\begin{tabular}{llcc}
		variable(s) & proposal & ESS & accept.~rate \\
		\hline
		\multirow{2}{6em}{latent $\alpha T$} & gamma & 992 (432) & 0.440 (0.022) \\
		& log-normal & 1114 (479) & 0.440 (0.021) \\
		\hline
		\multirow{2}{6em}{jumps $\alpha J_{0j}$'s} & gamma & 966 (225) & 0.440 (0.009) \\
		& log-normal & 1779 (232) & 0.440 (0.008) \\
		\hline
	\end{tabular}
	\captionsetup{width=0.9\textwidth,font=small}
	\caption{\small Effective sample size (ESS) and acceptance rate for each target variable and proposal. Results are averaged over $1,000$ synthetic datasets; standard deviations are reported in parentheses. After $1,000$ burn-in steps, $10,000$ posterior samples are drawn for each dataset. The target acceptance rate for the adaptive MCMC scheme is $0.44$ \citep[see][]{roberts1996}.}
	\label{apptable:ess}
\end{table}

\subsection{Computing the coefficients in Proposition~\ref{prop:gamma_baselatent_exact}}
\label{app:coefficients_exact}

In this section, we show that the coefficients $c_h$, for $h = m, \dots, n$, defined in Proposition~\ref{prop:gamma_baselatent_exact}, can be computed through a sequence of discrete convolutions. For each $\ell = 1, \dots, k$, define the vector $\bm{c}(\ell) = (c(\ell,h))_h$ with
\begin{equation*}
	c(\ell; h) = \sum_{\substack{h_1 + \cdots + h_\ell = h \\ m_{\bullet j} \le h_j \le n_{\bullet j}}} \prod_{j=1}^\ell a(j; h_j), \qquad h = \sum_{j=1}^\ell m_{\bullet j}, \dots, \sum_{j=1}^\ell n_{\bullet j},
\end{equation*}
where $a(j; h) = \Gamma(h) \, S(n_{1j},\dots,n_{dj}; h)$, for $h = m_{\bullet j}, \dots, n_{\bullet j}$ and $j = 1, \dots, \ell$.
Thus, the $c_h$'s are such that $c_h = c(k; h)$, that is, coincide with $\bm{c}(k)$. The entries of vector $\bm{c}(\ell)$ can be computed from vector $\bm{c}(\ell-1)$ as follows:
\begin{align*}
	c(\ell; h) & = \sum_{\substack{h_1 + \cdots + h_\ell = h \\ m_{\bullet j} \le h_j \le n_{\bullet j}}} \prod_{j=1}^{\ell} a(j; h_j) = \sum_{h_{\ell} = m_{\bullet \ell}}^{n_{\bullet \ell}} \left( \sum_{\substack{h_1 + \cdots + h_{\ell-1} = h - h_{\ell} \\ m_{\bullet j} \le h_j \le n_{\bullet j}}} \prod_{j=1}^{\ell-1} a(j; h_j) \right) a(\ell; h_\ell) \\
	& = \sum_{h_{\ell} = m_{\bullet \ell}}^{n_{\bullet \ell}} c(\ell-1; h-h_\ell) \, a(\ell; h_\ell).
\end{align*}
In other words, the vector $\bm{c}(\ell)$ is obtained by convolution between vector $\bm{c}(\ell-1)$ and vector $\bm{a}(\ell) = (a(\ell;h),h)$.
The computational cost for this operation is $\mathcal{O}\left( n_{\bullet \ell} \sum_{j=1}^{\ell-1} n_{\bullet j} \right) $.
Therefore, the coefficients $(c_h)_h = \bm{c}(k)$ can be computed through the recursive relation
\begin{equation*}
	\bm{c}(0) = (1), \qquad \bm{c}(\ell) = \bm{c}(\ell-1) * \bm{a}(\ell) \quad (\ell = 1, \dots, k),
\end{equation*}
or, equivalently, $\bm{c}(k) = \bm{a}(1) * \cdots * \bm{a}(k)$, where $*$ is the convolution operator. The total computational cost is $\mathcal{O}\left( \sum_{j<\ell} n_{\bullet j} n_{\bullet \ell} \right)$.

\subsection{Optimal choice of parameter $r$}
\label{app:ropt}

Section~\ref{sec:exact_sampling_gamma} outlines a rejection sampling algorithm for sampling the latent variable $\alpha T$ from its density \eqref{eq:density_baselatent}. Specifically, values are proposed from ${\rm Gamma}(\alpha_0 + r, b_0 / \alpha)$ and accepted with probability proportional to $t^{-r} R(t)$, where $r$ is a real parameter and
\begin{equation*}
	R(t) = \prod_{i=1}^d \frac{1}{((t))_{n_i}} \left( \sum_{h=m}^n \frac{c_h}{((\alpha_0))_h} \, t^h \right).
\end{equation*}
A necessary condition for the rejection sampling scheme is $t^{-r} R(t)$ to be bounded above for $t \ge 0$. Since $R(t)$ is a ratio of polynomials, both having degree $n$ and non-negative coefficients, it is a continuous function for $t > 0$. Moreover, when $q$ is a positive integer, $((s))_q \sim s$ for $s \to 0$ while $((s))_q \sim s^q$ for $s \to \infty$. Hence, $R(t) \sim t^{m-d}$ for $t \to 0$ and $R(t) \sim c_n / ((\alpha_0))_n$ for $t \to \infty$, which implies that $t^{-r} R(t)$ is continuous and bounded for $t \ge 0$ when $0 \le r \le m-d$. Our goal is choosing the value of $r$ within this interval such that the acceptance probability is maximized.

For this purpose, let $t^*(r)$ be the value of $t$ that maximizes $t^{-r} R(t)$ for $t \ge 0$. The overall acceptance probability is
\begin{align*}
	& \mathbb{E}_T \left[ \left(\frac{t^*(r)}{t}\right)^r \frac{R(t)}{R(t^*(r))} \right] =\\
	& = \frac{t^*(r)^r}{R(t^*(r))} \int_0^\infty t^{-r} R(t) \left( \frac{b_0}{\alpha} \right)^{\alpha_0+r} \Gamma(\alpha_0+r)^{-1} t^{\alpha_0+r-1} e^{-(b_0/\alpha)\,t} \ddr t \\
	& = \frac{t^*(r)^r}{R(t^*(r))} \left( \frac{b_0}{\alpha} \right)^{\alpha_0+r} \Gamma(\alpha_0+r)^{-1} \int_0^\infty t^{\alpha_0-1} e^{-(b_0/\alpha)\,t} R(t) \, \ddr t.
\end{align*}
Finding the value of $0 \le r \le m-d$ that maximizes the quantity above is equivalent to finding the value maximizing its logarithm, discarding terms not depending on $r$, that is
\begin{equation*}
	r^* = \arg\max_r \left\{ r \log t^*(r) - \log R(t^*(r)) + (\alpha_0 + r) \log (b_0/\alpha) - \log \Gamma(\alpha_0 + r) \right\}.
\end{equation*}
This maximization problem can be further simplified by restricting to a finite set of potentially maximizing values. 
Indeed, for $0 < r < m-d$, the function $R(t)$ is continuous and differentiable for $t > 0$, and such that $t^{-r} R(t) \to 0$ for both $t \to 0$ and $t \to \infty$. Hence, $t^*(r)$ is a stationary point for $t^{-r} R(t)$, which implies $R'(t^*(r)) \, t^*(r) = r\,R(t^*(r))$. 
Moreover, by the implicit function theorem, $t^*(r)$ is a continuous and differentiable function in $r$. Therefore, the objective function is continuous and differentiable for $0 < r < m-d$, and the set of potentially maximizing points in $(0,m-d)$ may be restricted to the stationary points (if any), satisfying
\begin{equation*}
	\log t^*(r) + \log (b_0/\alpha) - \Psi(\alpha_0 + r) = 0,
\end{equation*}
where $\Psi$ denotes here the digamma function $\Psi(x) = \frac{\ddr}{\ddr x} \log \Gamma(x)$, and we have used that $t^*(r)$ is a stationary point for $t^{-r} R(t)$ and thus $R'(t^*(r)) \, t^*(r) = r\,R(t^*(r))$.
Remarkably, this stationarity condition is also satisfied at the boundary of the maximization set. Indeed, for $r = 0$, the optimal $t^*(r)$ may be either a stationary point or $+\infty$, for which $R'(t^*(r)) = 0$ necessarily holds. On the other hand, if $r = m-d$, the optimal $t^*(r)$ may be either a stationary point or $0$, for which $R(t^*(r)) = 0$ holds.

\subsection{Sampling from the hierarchy of gamma CRMs}
\label{app:sampling_random_measure}

From Proposition~\ref{prop:gamma_jumps}(a), the residual component $\bm{\crm}^*$ of the posterior distribution of $\bm{\crm}$ retains a hierarchical structure, conditionally on latent variables $\bm{U}$:
\begin{align*}
	\crm_1^*,\dots,\crm_d^* \mid \crm_0^*, \bm{U} & \sim \prod_{i=1}^d \CRM\left( \alpha \, s^{-1}\, e^{-b(1 + U_i/b)\,s} \ddr s \otimes \crm_0^*\right), \\
	\crm_0^* \mid \bm{U} & \sim \CRM \left( \alpha_0 \, s^{-1}\, e^{-\alpha\, \lambda(\bm{U})\,s} \ddr s \otimes P_0 \right).
\end{align*}
Therefore, one needs to sample from a hierarchy of gamma CRMs in order to obtain complete samples from the posterior.

At the root of the hierarchy, approximate posterior samples from the rescaled gamma random measure $\alpha \tilde \mu_0^* \mid \bm{U}$ can be obtained through the Ferguson-Klass representation \citep{Ferguson1972}. This amounts to sampling the largest $L$ jumps of an infinitely active random measure in decreasing order, and thus provides its best finite-dimensional approximation: for a fixed truncation level, the approximation error is minimized. A straightforward approach uses the inverse L\'evy measure algorithm \citep{Wolpert1998,walker2000}.
For $\ell = 1, \dots, L$, let $\omega_{0\ell} \ge 0$ be the value solving the equation
\begin{equation*}
	\frac{\xi_\ell}{\alpha_0} = \int_{\omega_{0\ell}}^\infty s^{-1}\, e^{-\alpha\, \lambda(\bm{U})\,s} \ddr s = E_1 ( \alpha\, \lambda(\bm{U}) \,\omega_{0\ell} ) \qquad (\ell = 1, \dots, L),
\end{equation*}
where $E_1$ is the exponential integral function and $\xi_1 < \dots < \xi_L$ a.s. are the first $L$ jump times of a unit rate Poisson process, that is, $\xi_0 = 0$ and the inter-arrival times are $\xi_{\ell} - \xi_{\ell-1} \simiid {\rm Exp}(1)$, for $\ell=1,\dots,{L}$. Then the random measure $\alpha \tilde \mu_0^* \mid \bm{U}$ can be approximated as
\begin{equation*}
	\alpha \tilde \mu_0^* \mid \bm{U} \approx \sum_{\ell=1}^L (\alpha \omega_{0\ell}) \, \delta_{\phi_\ell},
\end{equation*}
where $\phi_1,\dots,\phi_L \simiid P_{0}$ are independent from the Poisson process $(\xi_1, \dots, \xi_L)$. This algorithm requires to sequentially invert the exponential integral function numerically, which is a nontrivial albeit much investigated task. Details about our implementation are provided in the following Section~\ref{app:expint}. Alternative approaches for sampling the largest $L$ jumps of a random measure in decreasing order, and their specifications for the gamma process, are explored in \cite{Campbell2019, Zhang2024}; see also \cite{Rosinski2001}.

The random measures in $\bm{\crm}^*$ are conditionally independent given $\alpha \tilde \mu_0^*$ and $\bm{U}$, and each component $\crm_i^* \mid \tilde \alpha \mu_0^*,\bm{U}$ can be approximately sampled from a gamma CRM having L\'evy intensity
\begin{equation*}
	\ddr \nu^*_i(s,x) = \sum_{\ell=1}^L (\alpha \omega_{0\ell}) \, s^{-1} e^{-b(1+U_i/b)\,s}\,\ddr s \, \ddr \delta_{\phi_\ell}(x).
\end{equation*}
The additive components of the L\'evy measure represent independent summands for $\crm_i^*$ concentrated at different fixed location. Hence, $\crm_i^* \mid \alpha \tilde \mu_0^*,\bm{U}$ can be approximated as
\begin{equation*}
	\crm_i^* \mid \alpha \tilde \mu_0^*,\bm{U} \approx \sum_{\ell=1}^L \omega_{i\ell}\,\delta_{\phi_\ell},
\end{equation*}
where $\omega_{i1}, \dots \omega_{iL}$ are independent random variables with $\omega_{i\ell} \sim \text{Gamma}(\alpha \omega_{0\ell}, b(1+U_i/b))$, for $\ell = 1, \dots, L$.
The sampling procedure to obtain an approximation of $\bm{\crm}$ by truncation of its infinite sequence of jumps is summarized in Figure~\ref{fig:structure_continuous}; the similarities with the sampling algorithms for jumps $\bm{J}$, depicted in Figure~\ref{fig:structure}, are evident. 
Note that the total mass of each random measure $\crm^*_i$, that is, the mass of the posterior random measure $\crm_i \mid \bm{X}_{1:d}$ not assigned to fixed locations, can instead be sampled exactly from a hierarchy of gamma random variables.

\begin{figure}[t]
	\centering
	\includegraphics[width=0.6\linewidth]{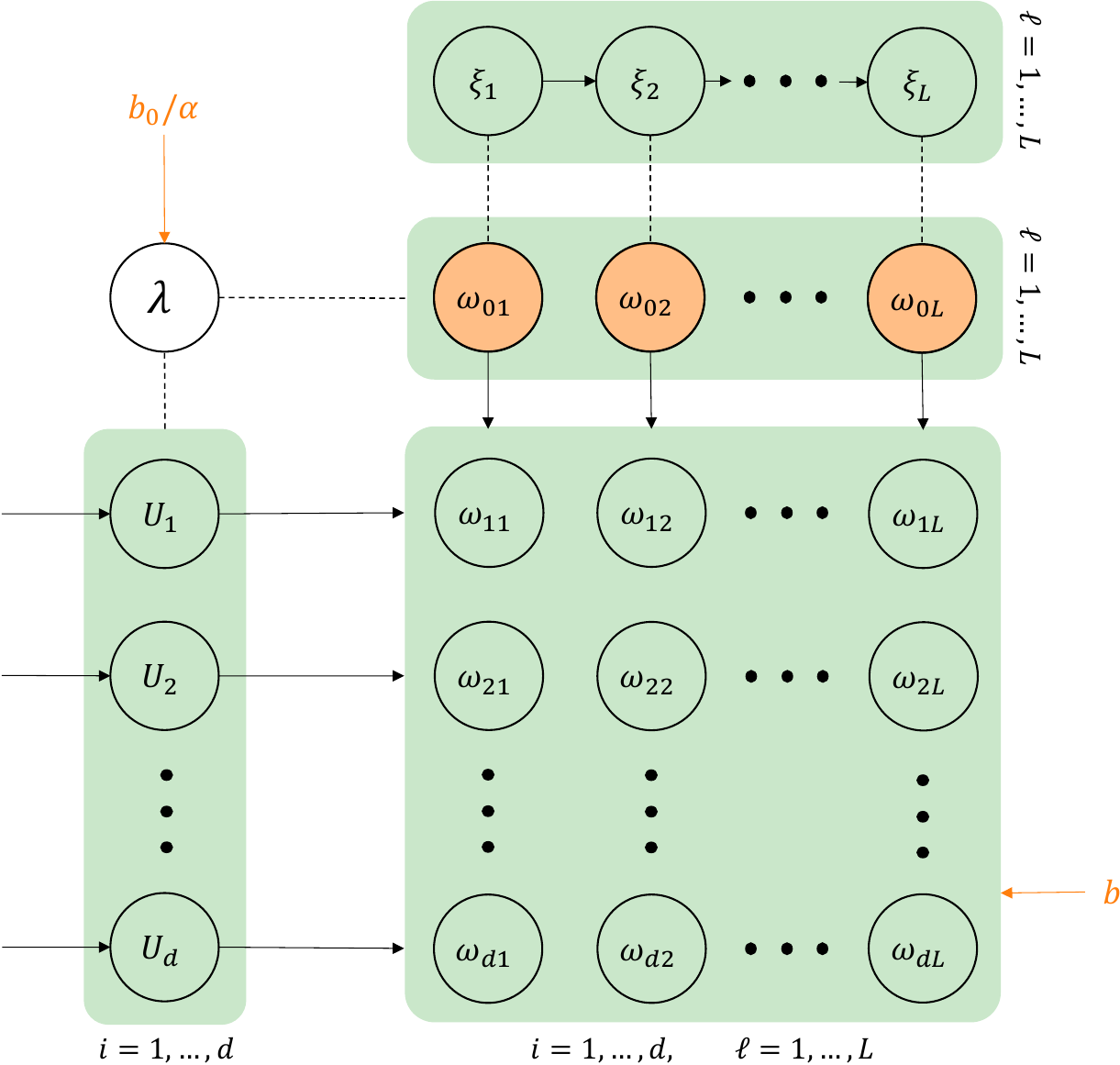}
	\captionsetup{width=0.9\textwidth,font=small}
	\caption{Conditional dependencies between random variables involved in sampling the hierarchy of gamma CRMs. Red circles represent the computational bottlenecks. The sampling scheme for the latent variables $\bm{U}$ is outlined in Section~\ref{sec:posterior_sampling_gamma} and Figure~\ref{fig:structure}. For simplicity, variables are reported up to scaling w.r.t.~model parameters.}
	\label{fig:structure_continuous}
\end{figure}

This same sampling procedure can be also employed to sample \emph{a priori} from the gamma-gamma hCRV, with minimal adjustments. Indeed, the prior construction introduced in Example~\ref{ex:gamma-gamma} coincides with the hierarchy in Proposition~\ref{prop:gamma_jumps}(a) displayed above when $U_i = 0$ for each $i = 1,\dots,d$, and thus $\lambda(\bm{U}) = b_0 / \alpha$. 
Specifically, approximate samples from the rescaled gamma random measure $\alpha \crm_0$ can be obtained via the inverse L\'evy measure algorithm, while increments of each random measure $\crm_i \mid \alpha \crm_0$ can be sampled independently from gamma distributions, as discussed in Remark~\ref{rmk:sampling}.

\subsection{Inverting the exponential integral}
\label{app:expint}

The implementation of the inverse L\'evy measure algorithm of \cite{walker2000} for the gamma CRM requires to invert the tail integrals of its L\'evy density (Section~\ref{app:sampling_random_measure}). This amounts to sequentially invert the exponential integral function $E_1$, defined for $x > 0$, 
\begin{equation*}
	E_1(x) = \int_x^{+\infty} s^{-1} e^{-s} \ddr s.
\end{equation*}
Note that $E_1$ is a strictly decreasing function, and thus is invertible. A convenient approach to find its inverse $E_1^{-1}(y)$, for a given value $y > 0$, is to determine the unique root of the function $x \mapsto E_1(x) - y$, exploiting root-finding algorithms such as Newton's method.
This method requires the evaluation of the derivative $E_1'$ of the exponential integral, which can be computed in closed form, as it equals the opposite of the integrand function, $E_1'(x) = - x^{-1}e^{-x}$. 

A well-known limitation of Newton's method is the possibility to obtain iteration values that fall outside the domain of the function, where its evaluation is not possible. This situation is particularly relevant to inverse L\'evy measure algorithms, as the tail integrals of infinitely divisible L\'evy densities diverge to $+\infty$ as the lower bound of the integration interval goes to $0$. A simple workaround that typically solves this issue is choosing a starting point $x_0$ for Newton's algorithm which falls on the left of the solution, that is $E_1(x_0) \ge y$. This can be achieved by iteratively halving an initial guess.
Remarkably, within the sequential approach required by the algorithm in Section~\ref{app:sampling_random_measure}, the standard choice for the initial guess is the solution at the previous step. However, this falls on the right of the current solution, and thus halving is always necessary.
Convergence of this algorithm is guaranteed whenever the L\'evy density is a decreasing function, and thus its tail integral is decreasing and convex. Figure~\ref{fig:newton} shows an illustration of the different behaviours of Newton's method, depending on the starting point $x_0$.

\begin{figure}[t]
	\centering
	\includegraphics[width=0.8\textwidth]{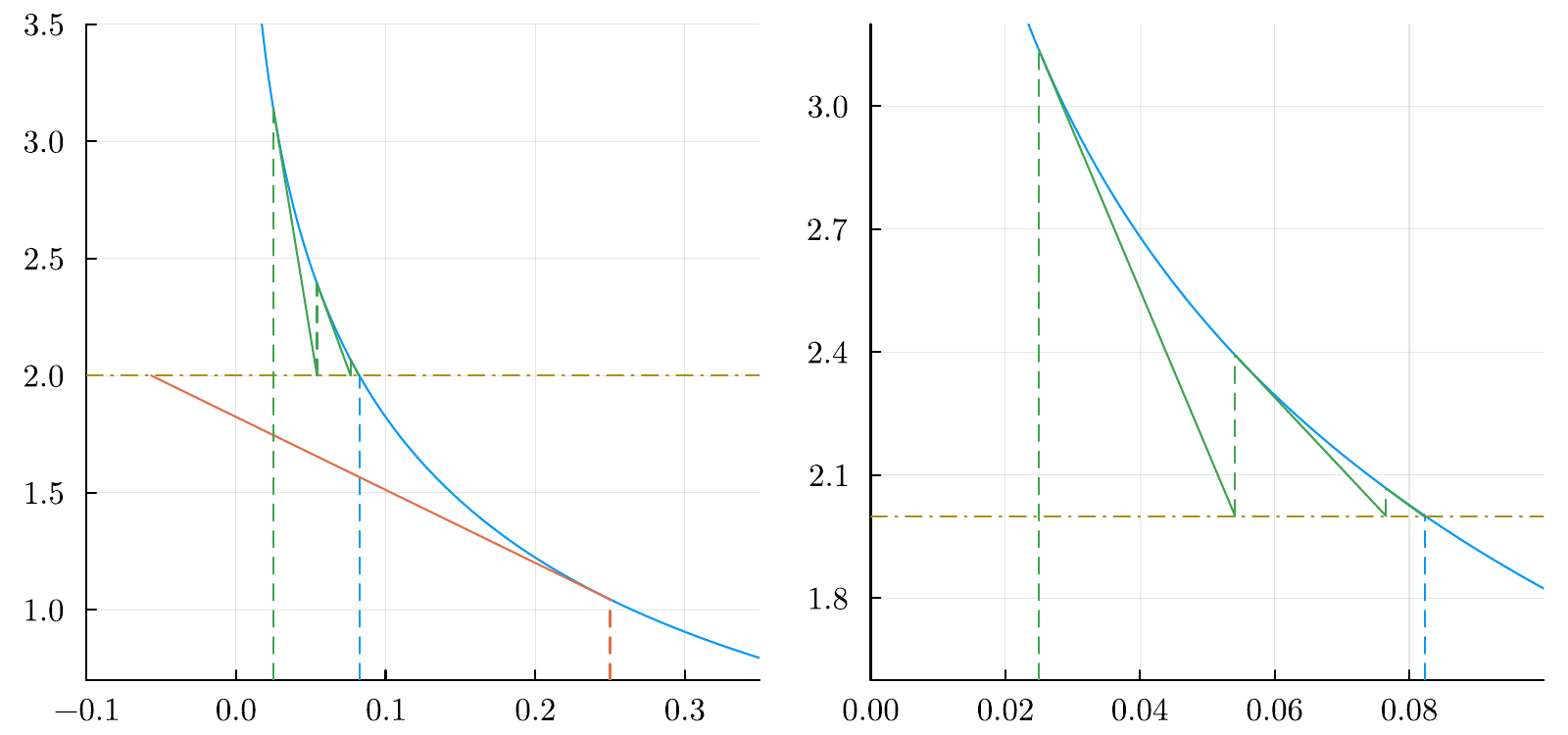}
	\captionsetup{width=0.9\textwidth,font=small}
	\caption{Iterations of Newton's method for solving the equation $E_1(x) = 2.0$, with different starting points. Starting on the right of the solution may lead the algorithm outside the domain of the function (orange); starting on the left guarantees convergence (green). The right panel zooms on converging iterations.}
	\label{fig:newton}
\end{figure}

Instead, we consider an alternative approach that improves the efficiency of Newton's method. Specifically, we redefine the exponential integral through the logarithm of its argument; in other words, we invert the function \cut
\begin{equation*}
	f(z) = E_1(e^z) = \int_{e^z}^{+\infty} s^{-1} e^{-s} \ddr s,
\end{equation*}
whose derivative is given by $f'(z) = E_1'(e^z) \,e^z = - \exp( - e^z)$. This formulation has the advantage of not being restricted to positive values, thus preventing iterations to fall outside the domain. Note that $f$ is decreasing and convex, which guarantees convergence of Newton's method for every starting point $x_0$. 

A decisive advantage of this approach is the asymptotic linearity of function $f$ as $z$ diverges to $-\infty$. Specifically, $f(z) \approx - \gamma - z$ for $z \to -\infty$, where $\gamma$ is the Euler-Mascheroni constant. This property is crucial for speeding up convergence of the numerical scheme, as the rate of convergence of Newton's method is proportional to the second derivative around the solution.
The asymptotic expansion is also useful to improve the numerical stability of function evaluations. We argue that redefining the tail integral as a function on the whole real line, removing the constraint to positive values, may be a general technique to enhance the performances of Newton's method for computing the inverse L\'evy measure, beyond the gamma process case.

\subsection{Posterior distribution of normalized random measures}
\label{app:normalized_jumps}

The posterior random probabilities arising from model \eqref{eq:model} are the normalization of the posterior random measures characterized in Theorem~\ref{th:posterior_crv} and specialized in Proposition~\ref{prop:gamma_jumps} for the gamma-gamma hCRV. 
The simplest approach to obtain posterior samples is thus normalizing the posterior samples from the corresponding random measures. In this section, we show that such posterior random probabilities are distributed as conditionally Dirichlet processes with discrete base measures. Therefore, their probability weights can be alternatively sampled from a Dirichlet distribution. This allows for a straightforward comparison with alternative samplers for the HDP, such as the marginal Gibbs samplers of \cite{Teh2006}; see the following Section~\ref{app:illustrations}.

For this purpose, denote by $\bm{\pi}$ and $\bm{\pi}^*$ the normalized jumps of $\crm_i \mid \bm{X}_{1:d}$, for $i = 1, \dots, d$, 
\begin{equation*}
	\pi_{ij} = \frac{J_{ij}}{\sum_{j=1}^k J_{ij} + \crm^*_i(\X)}, \qquad \pi^*_{i\ell} = \frac{\omega_{i\ell}}{\sum_{j=1}^k J_{ij} + \crm^*_i(\X)}, 
\end{equation*}
where the $J_{ij}$'s are jumps at fixed locations and the $\omega_{i\ell}$'s are the jumps of $\crm_i^*$, arranged in decreasing order; see Section~\ref{app:sampling_random_measure} for details on sampling from $\crm_i^*$. From Theorem~\ref{th:posterior_crv} it follows that the random probabilities $\bm{\tilde P} = \bm{\crm}/\bm{\crm}(\X)$ are distributed, a posteriori, as
\begin{equation}
	\label{eq:normalized_weights}
	\tilde P_i = \frac{\crm_i}{\crm_i(\X)} \bigm\vert \bm{X}_{1:d} \, \stackrel{d}{=} \, \sum_{j=1}^k \pi_{ij} \,\delta_{X^*_j} + \sum_{\ell \ge 1} \pi^*_{i\ell} \,\delta_{Y_\ell} \quad (i = 1, \dots, d).
\end{equation}
For the normalized gamma-gamma hCRV, the conditional distribution of each posterior random probability is a Dirichlet process with discrete base measure.

\begin{proposition}
	Let $\bm{\tilde P}$ be a normalized gamma-gamma hCRV. A posteriori, the random probabilities $\bm{\tilde P} \mid \bm{X}_{1:d}$ are conditionally independent, given variables $\alpha J_{01}, \dots, \alpha J_{0k}$ and the random measure $\crm^*_0$ in Proposition~\ref{prop:gamma_jumps}, and distributed as
	\begin{equation*}
		\tilde P_i = \frac{\crm_i}{\crm_i(\X)} \bigm\vert \bm{X}_{1:d}, \alpha \bm{J}_0, \crm^*_0 \, \stackrel{\mbox{\scriptsize{\rm ind}}}{\sim} \, {\rm DP} \left( \alpha \crm^*_0 + \sum_{j=1}^k (n_{ij} + \alpha J_{0j}) \,\delta_{X^*_j} \right) \qquad (i = 1, \dots, d).
	\end{equation*}
\end{proposition}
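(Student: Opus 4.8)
The plan is to further condition on the latent vector $\bm U$, show that conditionally on $(\crm_0^*,\bm U,\alpha J_{01},\dots,\alpha J_{0k})$ each normalized posterior random measure is a Dirichlet process whose base measure does not involve $\bm U$, and then drop $\bm U$ from the conditioning by the tower property. To set up, I would collect the relevant conditional structure from the posterior analysis: by Theorem~\ref{th:posterior_crv}, $\bm\crm\mid\bm X_{1:d}\eqd\bm\crm^*+\sum_{j=1}^k\bm J_j\delta_{X_j^*}$, and by Proposition~\ref{prop:gamma_jumps}, conditionally on $(\crm_0^*,\bm U)$ the measures $\crm_1^*,\dots,\crm_d^*$ are independent gamma CRMs with L\'evy intensity $\alpha\,s^{-1}e^{-(b+U_i)s}\,\ddr s\otimes\crm_0^*$, while $J_{ij}\mid U_i,J_{0j}\sim{\rm Gamma}(n_{ij}+\alpha J_{0j},b+U_i)$, conditionally independent over $i$ and $j$; moreover $\bm\crm^*$ is conditionally independent of $(\bm J_1,\dots,\bm J_k)$ given $\bm U$, and the $J_{0j}$ are conditionally independent given $\bm U$ (which follows from Propositions~\ref{th:posterior_hcrv} and \ref{prop:jumps}). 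Hence, conditionally on $(\crm_0^*,\bm U,J_{01},\dots,J_{0k})$, the block $(\crm_i^*,J_{i1},\dots,J_{ik})$ is, for each fixed $i$, composed of mutually independent gamma random variables all sharing the common rate $b+U_i$: the atoms of $\crm_i^*$ lie at the atoms $Y_\ell$ of $\crm_0^*$ with shapes $\alpha\,\crm_0^*(\{Y_\ell\})$, and the fixed atoms at $X_j^*$ carry shapes $n_{ij}+\alpha J_{0j}$.

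Next, I would invoke Ferguson's construction of the Dirichlet process as a normalized gamma CRM \citep{Ferguson1973}: a random measure obtained by placing independent ${\rm Gamma}(a_m,\beta)$ masses at distinct locations $z_m$ — combining a gamma CRM with diffuse shape measure and finitely many atomic gamma components — normalizes to a ${\rm DP}(\sum_m a_m\delta_{z_m})$, with the normalized measure independent of the total mass. Since $P_0$ is diffuse, $P_0(\{X_j^*\})=0$, so the atoms of $\crm_0^*$ are a.s.\ distinct from $X_1^*,\dots,X_k^*$ and no atoms are merged; applying the construction to $\crm_i^*+\sum_{j=1}^k J_{ij}\delta_{X_j^*}$ and using \eqref{eq:normalized_weights} yields
\begin{equation*}
    \tilde P_i\mid\bm X_{1:d},\crm_0^*,\bm U,J_{01},\dots,J_{0k}\ \sim\ {\rm DP}\Big(\alpha\,\crm_0^*+\sum_{j=1}^k (n_{ij}+\alpha J_{0j})\,\delta_{X_j^*}\Big).
\end{equation*}
The common rate $b+U_i$ cancels in the normalization, so this conditional law does not depend on $\bm U$; and the independence of the blocks over $i$ makes $\tilde P_1,\dots,\tilde P_d$ conditionally independent given $(\crm_0^*,\bm U,J_{01},\dots,J_{0k})$.

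Finally, I would remove $\bm U$ from the conditioning: writing $\bm J_0=(J_{01},\dots,J_{0k})$, for bounded measurable $g_1,\dots,g_d$,
\begin{equation*}
    \E\Big[\textstyle\prod_{i=1}^d g_i(\tilde P_i)\,\Big|\,\crm_0^*,\bm J_0\Big]=\E\Big[\textstyle\prod_{i=1}^d\E\big[g_i(\tilde P_i)\,\big|\,\crm_0^*,\bm U,\bm J_0\big]\,\Big|\,\crm_0^*,\bm J_0\Big]=\textstyle\prod_{i=1}^d\E\big[g_i(\tilde P_i)\,\big|\,\crm_0^*,\bm J_0\big],
\end{equation*}
where each inner conditional expectation is free of $\bm U$ by the previous step; this delivers both the conditional independence given $(\crm_0^*,\alpha J_{01},\dots,\alpha J_{0k})$ and the claimed ${\rm DP}$ law. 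The main obstacle I anticipate is assembling the conditional-independence structure linking $\crm_0^*$, $\bm\crm^*$ and the $\bm J_j$'s cleanly from the earlier results, and stating precisely the version of Ferguson's normalized-gamma representation that handles a base measure with both a diffuse and an atomic part; the remaining steps are bookkeeping.
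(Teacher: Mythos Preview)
Your proposal is correct and follows essentially the same approach as the paper: condition further on $\bm U$, recognize that each $\crm_i^*+\sum_j J_{ij}\delta_{X_j^*}$ is, given $(\crm_0^*,\bm U,\alpha\bm J_0)$, a gamma CRM with rate $b+U_i$ and base measure $\alpha\crm_0^*+\sum_j(n_{ij}+\alpha J_{0j})\delta_{X_j^*}$, normalize via Ferguson to get a Dirichlet process independent of the rate, and then drop $\bm U$. If anything, your treatment is more explicit than the paper's, which leaves the tower-property removal of $\bm U$ and the conditional independence across $i$ largely implicit.
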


\begin{proof}
	By Proposition~\ref{prop:gamma_jumps}, independently for each $i = 1, \dots, d$, the random measure $\crm^*_i \mid \crm^*_0, U_i$ is a gamma CRM with shape $\alpha \crm^*_0(\X)$ and rate $b + U_i$. Moreover, for each $j = 1, \dots, k$, the random jump $J_{ij} \mid \alpha J_{0j}, U_i$ is independently gamma distributed with shape $n_{ij} + \alpha J_{0j}$ and rate $b + U_i$. Therefore, the posterior distribution of $\crm_i \mid \alpha \bm{J}_0, \crm^*_0, U_i$ is that of a gamma CRM, being a superposition of independent gamma processes with same rate. Indeed, its L\'evy intensity is
	\begin{equation*}
		s^{-1} e^{-b(1+U_i/b)} \, \bigg( \alpha \crm^*_0 + \sum_{j=1}^k (n_{ij} + \alpha J_{0j}) \,\delta_{X^*_j} \bigg) \qquad (i = 1, \dots, d).
	\end{equation*}
	The normalization of a gamma CRM is then a Dirichlet process \citep{Ferguson1973}.
\end{proof}

Remarkably, the posterior distribution of $\bm{\tilde P}$ does not depend on the prior rate parameter $b$, as already discussed right after Proposition~\ref{th:hdp_hyper}. Moreover, it is conditionally independent of latent variables $\bm{U}$, given $\alpha \bm{J}_0$ and $\crm_0^*$.

This result is particularly relevant from the algorithmic point of view, as anticipated above.
Indeed, after obtaining samples from variables $\alpha J_{01}, \dots, \alpha J_{0k}$ and an approximation by truncation of $\alpha \crm^*_0$, as described in Section~\ref{app:sampling_random_measure}, the probability weights in \eqref{eq:normalized_weights} can be sampled from a $(k+L)$-dimensional Dirichlet distribution. In particular, for each $i = 1, \dots, d$ and $L \in \mathbb{N}$,
\begin{multline*}
	\big(\pi_{i1}, \dots, \pi_{ik}, \pi^*_{i1}, \dots, \pi^*_{iL} \big) \bigm\vert \bm{X}_{1:d}, \bm{J}_0, \crm^*_0 \\
	\sim \text{Dirichlet}\big(n_{i1} + \alpha J_{01}, \dots, n_{ik} + \alpha J_{0k}, \alpha \omega_{01}, \dots, \alpha \omega_{0L} \big).
\end{multline*}
The conditional independence from $\bm{U}$ allows further parallelization of the posterior sampling schemes.
The resulting structure of conditional dependencies within the complete sampling algorithms for the normalized posterior random measures is summarized in Figure~\ref{fig:structure_normalized}.

\begin{figure}[t]
	\centering
	\includegraphics[width=\linewidth]{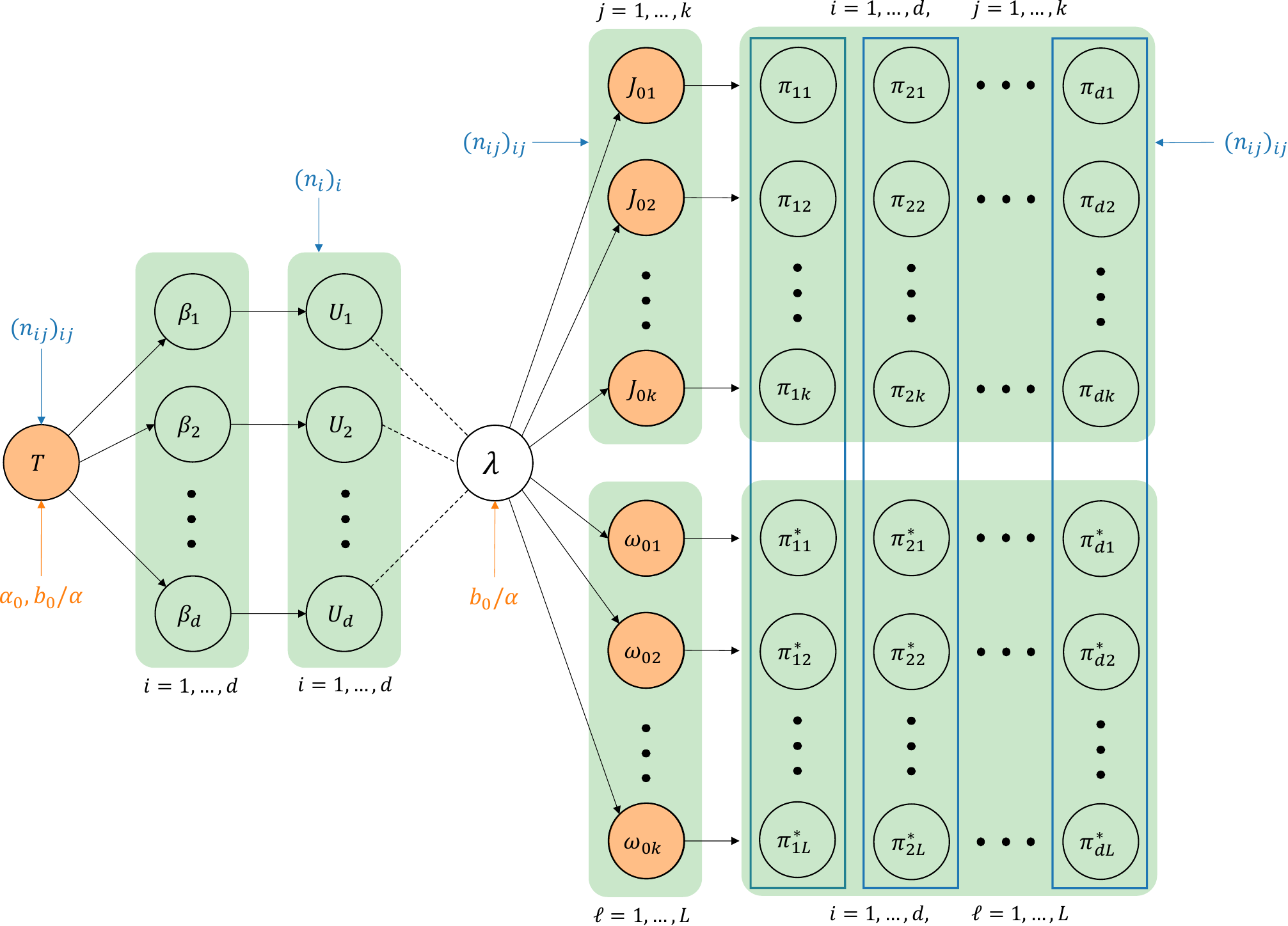}
	\captionsetup{width=0.9\textwidth,font=small}
	\caption{Conditional dependencies between random variables in the sampling algorithms for posterior normalized random measures. Red circles represent the computation bottlenecks; quantities enclosed in blue boxes are sampled from Dirichlet distributions. Variables are reported up to scaling w.r.t.~model parameters.}
	\label{fig:structure_normalized}
\end{figure}

\subsection{Marginal Gibbs samplers for the HDP}
\label{app:tables}

This section provides the details for the derivation of the marginal posterior samplers for the HDP considered in Section~\ref{sec:algorithms_gamma}, namely the CRF-based Gibbs sampler and the collapsed Gibbs sampler. 
The CRF-based Gibbs sampler is based on existing work \citep{Teh2006,Camerlenghi2019}, which we recall here using our notation. 
Proposition~\ref{prop:hdp_tables} establishes a new result, which determines our collapsed Gibbs sampler; this can be seen as an extension of the sampling schemes proposed in \citet{Teh2006} and \citet{baccallado2022}.

The hierarchical Dirichlet process was first described by \cite{Teh2006}, where the restaurant franchise metaphor is introduced. According to this metaphor, customers in each restaurant eat dishes selected from a countably infinite menu that is shared across restaurants. Moreover, in each restaurant, customers eating the same dish may or may not seat at the same table, while customers eating different dishes are necessarily seated at different tables. 
In other words, observations are organized within a nested partition structure. Remarkably, this structure is not specific to the hierarchical Dirichlet process, but characterizes every hierarchical structure of normalized random measures considered in \cite{Camerlenghi2019}; see also \cite{Sankhya2024}. Such partition is typically described through two collections of variables, displaying ties among each collection: for each restaurant $i = 1,\dots,d$, variables $X_{i\ell}$ and $Z_{i\ell}$ denote, respectively, the dish 
and table of the $\ell$-th customer. The distinct values taken by variables $\bm{X}_{1:d} = ({\bm X}_1, \dots, {\bm X}_d)$ are denoted by $X^*_1,\dots,X^*_k$, and represent the distinct dishes from the common menu.
Similarly, for each restaurant $i = 1,\dots,d$, the distinct values taken by the variables $\bm{Z}_{i} = (Z_{i1}, \dots, Z_{in_i})$, are denoted by $Z^*_{ij1},\dots,Z^*_{ijh_{ij}}$ for $j = 1, \dots, k$,
and represent the different tables serving each dish in the $i$-th restaurant. The particular choice of hierarchical random measures determines the joint probability distribution of $\bm{X}_{1:d}$ and $\bm{Z}_{1:d} = ({\bm Z}_1, \dots, {\bm Z}_d)$, namely the dishes and tables allocation, or equivalently the sequence of their predictive distributions. For a hierarchical Dirichlet process with concentration parameters $\alpha, \alpha_0 > 0$, the joint probability of tables and dishes takes the form 
\begin{equation} \label{eq:peppf}
	\Prob (\bm{X}_{1:d}, \bm{Z}_{1:d} ) = \frac{\alpha_0^k}{((\alpha_0))_h} \prod_{j=1}^k \Gamma(h_{\bullet j}) \, \prod_{i=1}^d \left( \frac{\alpha^{h_{i\bullet}}}{((\alpha))_{n_i}} \prod_{j=1}^k \prod_{r=1}^{h_{ij}} \Gamma(q_{ijr}) \right).
\end{equation}
In the expression above, $h_{ij}$ denotes the (random) number of tables in which the $n_{ij}$ customers eating dish $X^*_j$ in restaurant $i$ are partitioned, while $q_{ijr}$, for $r = 1,\dots,h_{ij}$, denotes the (random) number of customers seated at each of those tables.

Contrary to the approach of \cite{Teh2006}, which considers kernel mixtures, our model \eqref{eq:model} assumes that the dishes $\bm{X}_{1:d}$ eaten by each customer are actually observed, and thus fixed. Therefore, one only has to sample the tables $\bm{Z}_{1:d}$, in order to obtain a complete description of the nested partition. Remarkably, the full conditional distribution of each $Z_{i\ell}$, given $\bm{X}_{1:d}$ and the other $\bm{Z}_{1:d}$, can be easily derived from the joint probability \eqref{eq:peppf}. According to the restaurant franchise metaphor, having observed that the $\ell$-th customer from restaurant $i$ eats dish $X^*_j$,  they may either sit at tables $Z^*_{ij1},\dots,Z^*_{ijh_{ij}}$ or open a new table. More precisely, given $X_{i\ell} = X^*_j$, then \cut
\begin{align*}
	\Prob(Z_{i\ell} = Z^*_{ijr} \mid \cdots) & \propto \frac{q_{ijr}^{-\ell}}{\alpha + n_i}, \qquad r = 1, \dots, h_{ij}, \\
	\Prob(Z_{i\ell} = {\rm 'new'} \mid \cdots) & \propto \frac{\alpha}{\alpha + n_i} \, \frac{h_{\bullet j}^{-\ell}}{\alpha_0 + h},
\end{align*}
where $q_{ijr}^{-\ell}$ and $h_{\bullet j}^{-\ell}$ denote, respectively, the number of customers at table $Z^*_{ijr}$ and the number of tables serving dish $X^*_j$ across all restaurants, once customer $\ell$ is removed from the partition structure. As a result, a straightforward Gibbs sampler consists in sequentially resampling each $Z_{i\ell}$, for $i = 1, \dots, d$ and $\ell = 1, \dots, n_i$, from its conditional distribution. In Section~\ref{sec:algorithms_gamma}, we refer to this sampling scheme as CRF-based Gibbs sampler. Note that, since one sequentially updates $n$ latent variables, the computational cost is at least linear in the number of observations. 

The posterior distribution of the hierarchical Dirichlet process \eqref{def:hdp}, given the allocations of the observations within the nested partition, preserves the hierarchical structure \citep[e.g.,][]{Teh2010}, 
\begin{equation*}
	\tilde P_i \mid {\bm X}_{1:d}, {\bm Z}_{1:d}, \tilde P_0 \sim {\rm DP}\left( \alpha \tilde P_0 + \sum_{j=1}^k n_{ij} \delta_{X^*_j} \right); \qquad \tilde P_0 \sim {\rm DP}\left( \alpha_0 P_0 + \sum_{j=1}^k h_{\bullet j} \delta_{X^*_j} \right).
\end{equation*}
Interestingly, these distributions depend only on the numbers of customers $n_{ij}$'s for each restaurant and dish, and on the number of tables $h_{\bullet j}$'s serving each dish across all restaurants. In our setting, the counts $(n_{ij})_{ij}$ are observed, and thus the posterior distribution is fully characterized by sampling the $h_{\bullet j}$'s from their conditional distribution. 

\begin{proposition} \label{prop:hdp_tables}
	Let $\bm{\tilde P}$ be a hierarchical Dirichlet process, characterized by the probability distribution \eqref{eq:peppf}. Conditionally on the counts $(n_{ij})_{ij}$, the probability of the discrete variables $h_j = h_{\bullet j}$ for $j = 1, \dots, k$ is 
	\begin{equation*}
		\Prob(h_1,\dots,h_k) \propto \frac{\alpha^h}{((\alpha_0))_h} \prod_{j=1}^k \Gamma(h_j) \,  S(n_{1j}, \dots, n_{dj}; h_j),
	\end{equation*}
	where coefficients $S(q_1,\dots,q_d; h)$ are the generalized Stirling numbers defined in \eqref{eq:multivariate_stirling}.
\end{proposition}

\begin{proof}
	For a hierarchical Dirichlet process with concentration parameters $\alpha, \alpha_0 > 0$, the joint probability of $\bm{X}_{1:d}$ and the tables counts $(h_{ij})_{ij}$ is
	\begin{equation*}
		\frac{\alpha_0^k}{((\alpha_0))_h} \prod_{j=1}^k \Gamma(h_{\bullet j}) \, \prod_{i=1}^d \left( \frac{\alpha^{h_{i\bullet}}}{((\alpha))_{n_i}} \prod_{j=1}^k S(n_{ij}; h_{ij}) \right).
	\end{equation*}
	This expression can be obtained from \eqref{eq:peppf} by summing over the set of unordered partitions of $\set{1,\dots,n_{ij}}$ into $h_{ij}$ non-empty cycles, for each restaurant $i = 1, \dots, d$ and dish $j = 1, \dots, k$.
	The same result is derived in \citet[][Example~3]{Camerlenghi2019}, where the probability above is stated with a further marginalization over tables $(h_{ij})_{ij}$.
	Furthermore, for each dish $j$, we sum over the set of values $(h_{ij})_i$ such that $h_{1j} + \cdots h_{dj} = h_j$ and $0 \le h_{ij} \le n_{ij}$, for $i = 1, \dots, d$. Exploiting the definition of multivariate Stirling numbers \eqref{eq:multivariate_stirling_supp}, the probability distribution of $\bm{X}_{1:d}$ and the $h_j$'s is
	\begin{equation*}
		\frac{\alpha_0^k\, \alpha^h}{((\alpha_0))_h} \prod_{i=1}^d \frac{1}{((\alpha))_{n_i}} \prod_{j=1}^k \Gamma(h_{j}) \, S(n_{1j}, \dots, n_{dj}; h_{j}).
	\end{equation*}
	The conditional distribution of the $h_j$'s is proportional to the expression above.
\end{proof}

The direct sampling of the number of tables serving each dish in each restaurant was first suggested in \cite{Teh2006}, although conditionally on the random probability $\tilde P_0$. Instead, \cite{baccallado2022} adopt a marginal approach and rely on the conditional distribution of the $h_{ij}$'s for the hierarchical Pitman-Yor process, as derived in \citet[][Theorem 4]{Camerlenghi2019}. The further marginalization over restaurants in Proposition~\ref{prop:hdp_tables}
is particularly suitable for HDP, since its posterior structure depends only on the $h_{\bullet j}$'s; moreover, it conveniently reduces the dimension of the sampling space. As a result, one easily devises a Gibbs sampler for the $h_{\bullet j}$'s based on their joint distribution. In Section~\ref{sec:algorithms_gamma}, we refer to this sampling approach as collapsed Gibbs sampler. 

Finally, for a full comparison with the normalized gamma-gamma hCRV, we often consider a gamma prior on the concentration parameter $\alpha \sim {\rm Gamma}(\alpha_0, \beta)$; see Proposition~\ref{th:hdp_hyper} for the distributional equivalence result. The update of $\alpha$ within both Gibbs samplers outlined above is performed via a Metropolis-Hastings step, targeting the full conditional distribution
\begin{equation*}
	\alpha \mid {\bm X}_{1:d}, {\bm Z}_{1:d} \sim f(s) \propto s^{\alpha_0 + h - 1}\,e^{-\beta\,s} \prod_{i=1}^d \frac{1}{((s))_{n_i}}.
\end{equation*}
An alternative sampling scheme based on an augmentation with auxiliary beta and Bernoulli variables is detailed in the Appendix of \cite{Teh2006}. The Julia implementation of the CRF-based and the collapsed Gibbs samplers is available at \href{https://github.com/claudiodelsole/hCRV.jl}{github.com/claudio\\delsole/hCRV.jl}, where we also provide an interface to allow their integration within the R environment.

\subsection{Numerical illustrations}
\label{app:illustrations}

This section contains numerical illustrations 
of the posterior sampling algorithms introduced in Section~\ref{sec:algorithms_gamma}.
Specifically, we consider the MCMC sampler in Algorithm~\ref{alg:mcmc} with symmetric random-walk Metropolis-Hastings steps on the log-scale (mcmc), detailed in Section~\ref{app:mh_sampling}, and the exact sampler in Algorithm~\ref{alg:exact} (exact). As a reference to evaluate their effectiveness, we take the marginal Gibbs sampler of \cite{Teh2006} for the HDP, based on the restaurant franchise metaphor, with gamma prior on the concentration parameter; see Section~\ref{app:tables} above for details.  These algorithms target the same posterior distribution for the random probabilities (Proposition~\ref{th:hdp_hyper}), hence their posterior estimates should coincide.
We consider $d = 4$ groups of observations, each of size $n_i = 50$, sampled from independent Poisson distributions with means $2, 3, 4$ and $5$. The number of distinct values in the simulated dataset is $k = 11$. Model parameters are fixed at $\alpha_0 = \alpha = 1$ and $b_0 = b = 1$. We draw $10,000$ posterior samples for each algorithm; for the MCMC schemes, we consider a burn-in of $1,000$ steps.

\begin{figure}[t]
	\centering
	\includegraphics[width=\linewidth]{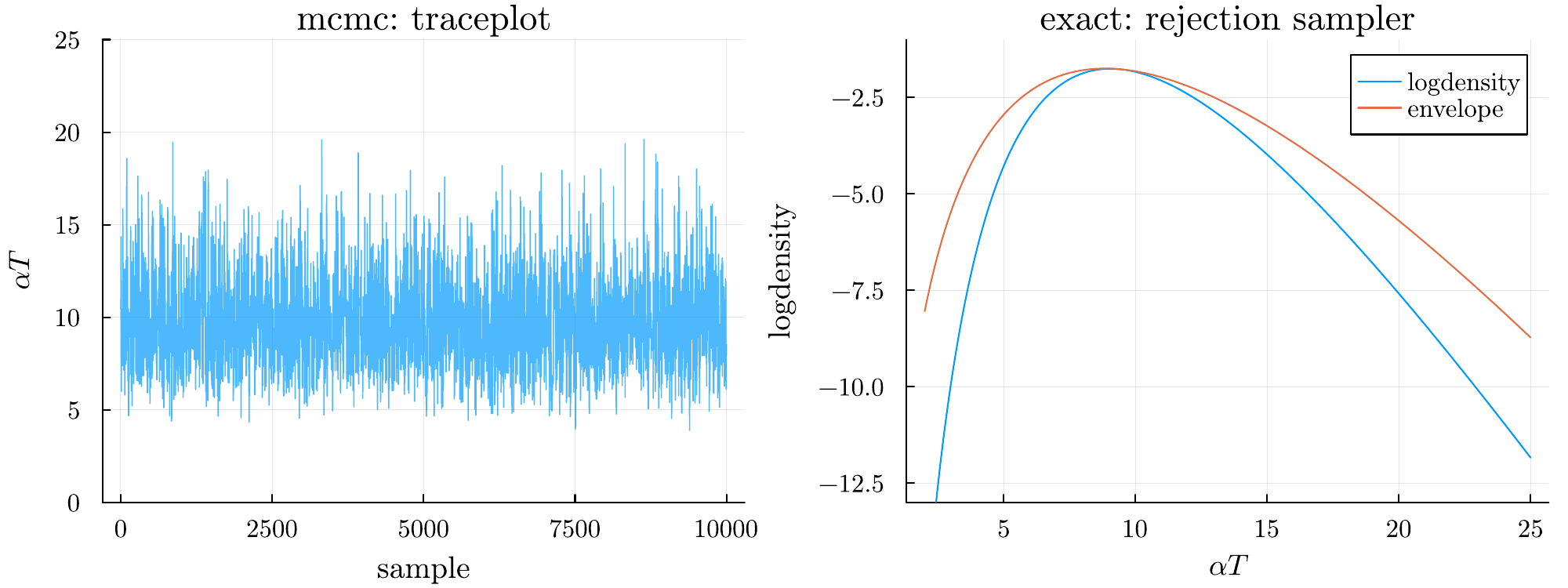}
	\includegraphics[width=.5\linewidth]{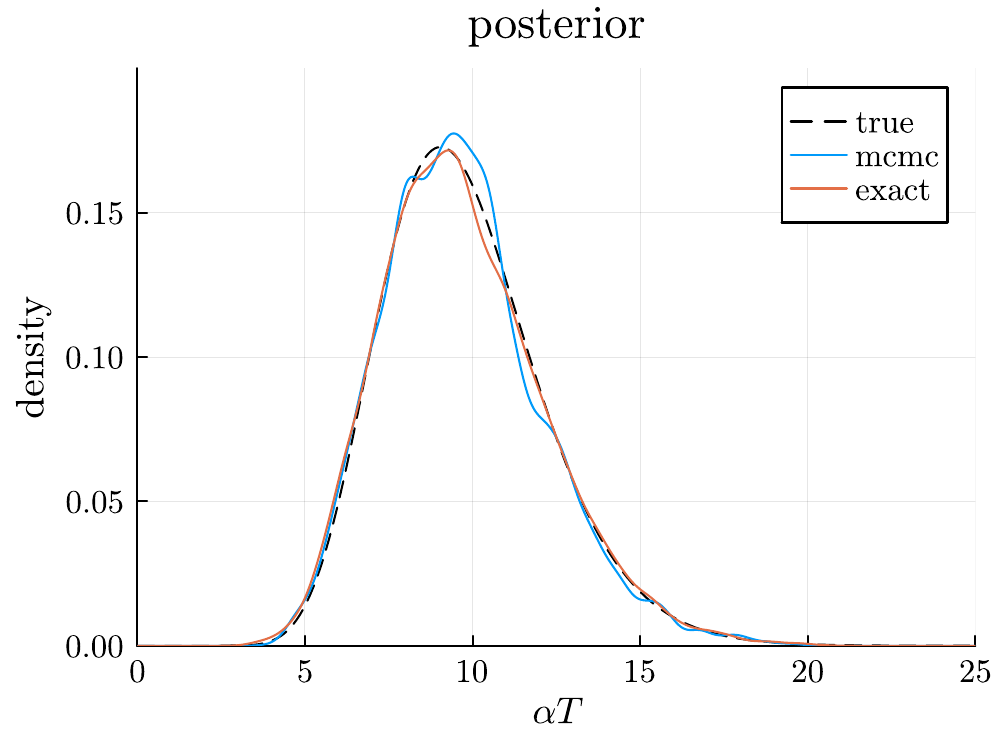}
	\captionsetup{width=0.9\textwidth,font=small}
	\caption{Diagnostics and posterior distribution for the random variable $\alpha T$. Top left: traceplot for the MCMC algorithms. Top right: envelope for the rejection sampler. Bottom: estimates of the posterior distribution via Gaussian kernel smoothing, compared with the true density.}
	\label{fig:latent}
\end{figure}

Figure~\ref{fig:latent} provides insights into the posterior sampling of random variable $\alpha T$. The MCMC algorithm shows good mixing, with an effective sample size of $1996$. The envelope for the rejection sampling in the exact algorithm is tight, and entails an acceptance rate of $0.765$; the optimal value for $r$ is $8.84$ (see Section~\ref{app:ropt}).
Instead, Figure~\ref{fig:jumps} displays the MCMC traceplot and posterior distributions for some of the $J_{0j}$'s. Specifically, we focus on the latent jumps at values $X^*_5 = 4$ and $X^*_8 = 7$, with counts $n_{\bullet 5} = 36$  and $n_{\bullet 8} = 7$. Again, the MCMC algorithms show good mixing, with effective sample sizes above $1,600$ for all $J_{0j}$'s.
Finally, the posterior distributions for the jumps $J_{ij}$ at values $X^*_5 = 4$ and $X^*_8 = 7$ for the $d = 4$ groups are displayed in Figure~\ref{fig:post}. The effective sample sizes are above $3,300$ for all $J_{ij}$'s.

\begin{figure}[t]
	\centering
	\includegraphics[width=\linewidth]{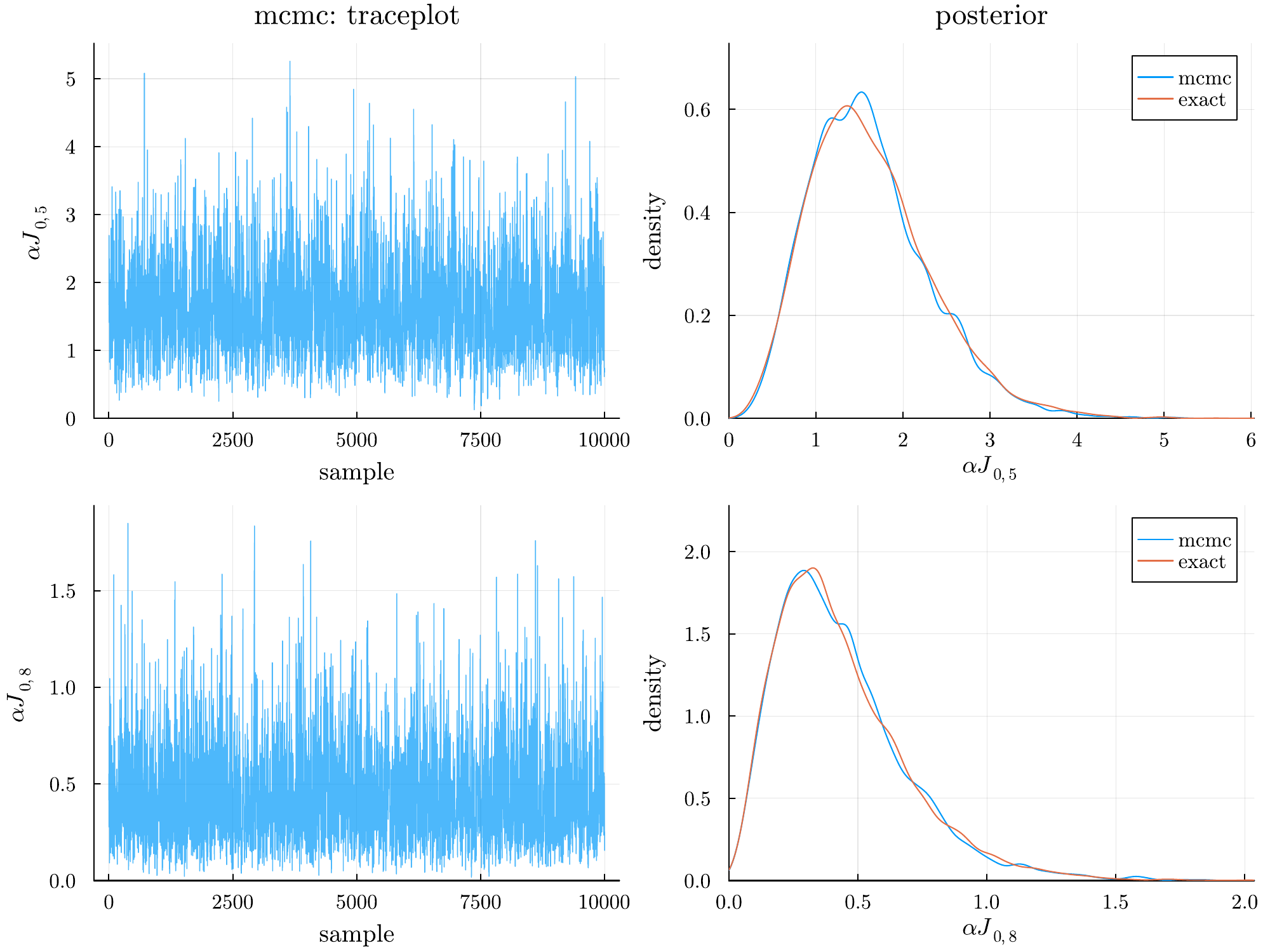}
	\captionsetup{width=0.9\textwidth,font=small}
	\caption{Diagnostics and posterior distributions for the latent jumps variables $J_{0j}$ at values $X^*_5 = 4$ and $X^*_8 = 7$. Left: traceplots for the MCMC algorithm. Right: estimates of the posterior distributions via Gaussian kernel smoothing.}
	\label{fig:jumps}
\end{figure}

\begin{figure}
	\centering
	\makebox[\textwidth]{\includegraphics[width=1.25\linewidth]{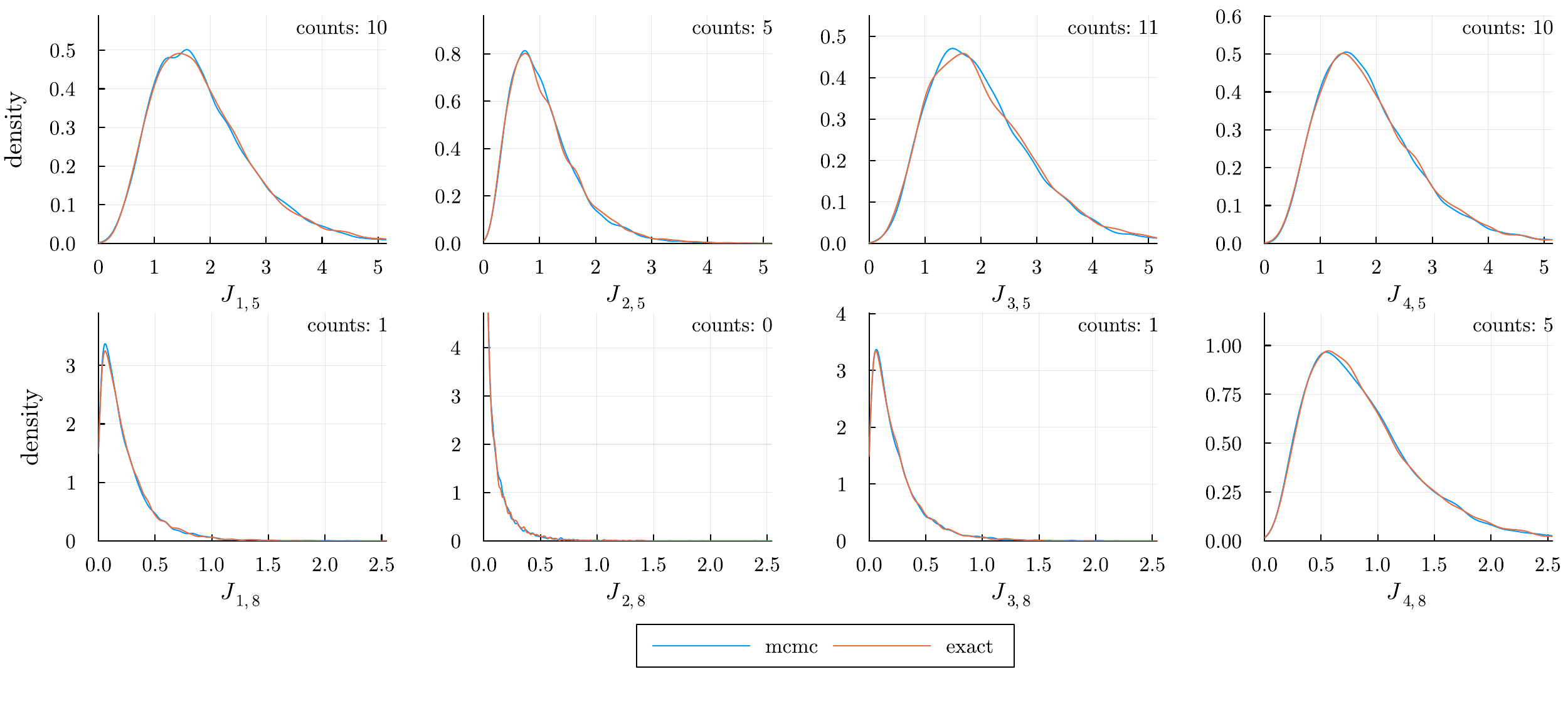}}
	\captionsetup{width=\textwidth,font=small}
	\caption{Posterior distributions for the jumps $J_{ij}$ at values $X^*_5 = 4$ and $X^*_8 = 7$, for the $d = 4$ groups, obtained via Gaussian kernel smoothing.}
	\label{fig:post}
\end{figure}

\begin{figure}
	\centering
	\makebox[\textwidth]{\includegraphics[width=1.25\linewidth]{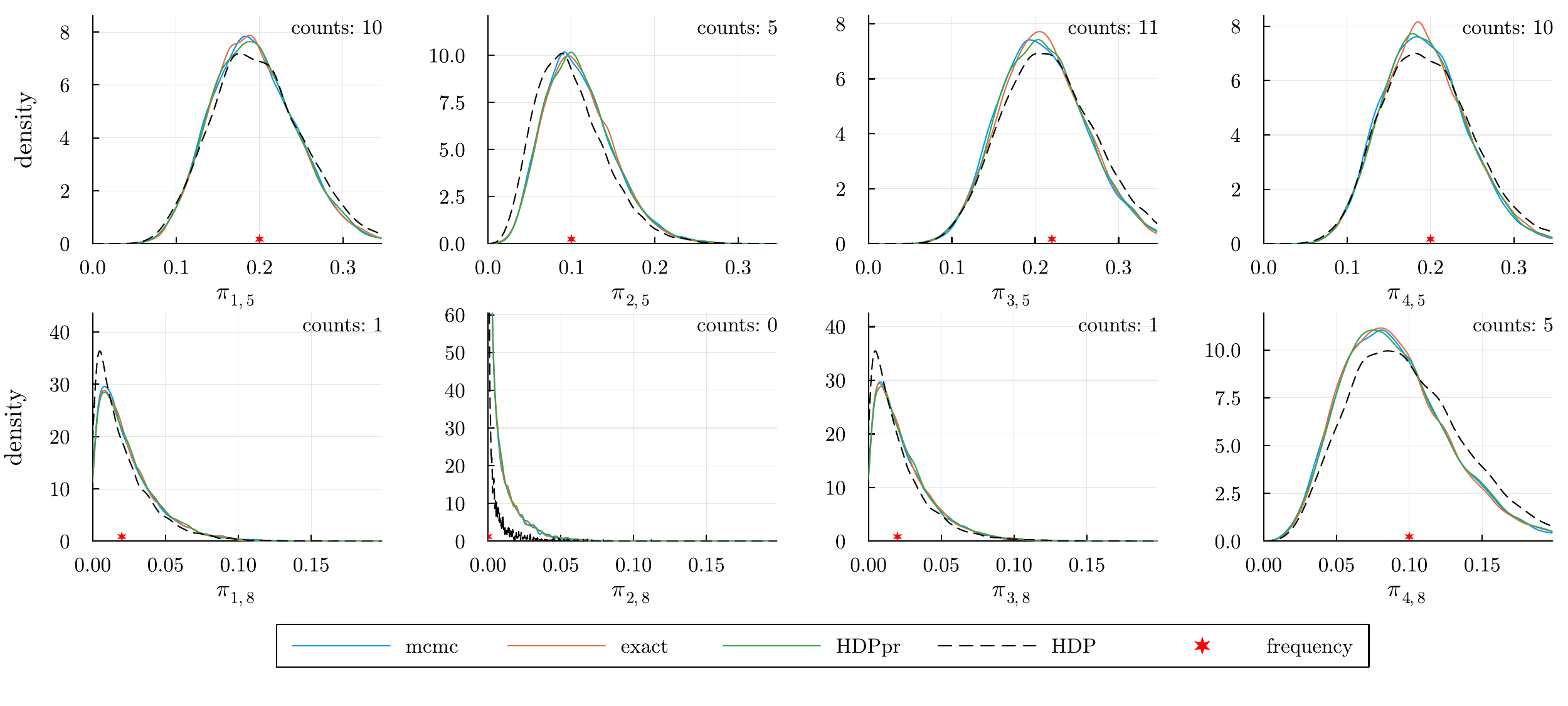}}
	\captionsetup{width=\textwidth,font=small}
	\caption{Posterior distributions for the probability weights $\pi_{ij}$ at values $X^*_5 = 4$ and $X^*_8 = 7$, for the $d = 4$ groups. The HDP model (without prior) targets different posterior distributions.}
	\label{fig:post_normalized}
\end{figure}

Furthermore, we conduct a visual comparison with the marginal Gibbs sampler for the HDP in terms of the accuracy of posterior distributions. Since this algorithm outputs posterior probability weights, we sample from the posterior random probabilities according to the procedure outlined in~\ref{app:normalized_jumps}. Considering the same experimental setting described above, we compare the MCMC sampler, the exact sampler, and the marginal Gibbs sampler for HDP, with or without gamma prior on the concentration parameter (HDP and HDPpr, respectively).
The posterior distributions for the probability weights $\pi_{ij}$ at the same values $X^*_5 = 4$ and $X^*_8 = 7$ are displayed in Figure~\ref{fig:post_normalized}. The effective sample sizes consistently exceed $4,000$. As expected, estimates of the posterior distributions for the HDP model without prior visibly differ from those obtained with the other algorithms, which instead target the same posterior distributions. This difference may be likewise observed in the right plot of Figure~\ref{fig:comparison}.

\subsection{Comparison for increasing number of distinct values}
\label{app:timesk}

Section~\ref{sec:simulations} of the main manuscript compares the different algorithms for posterior sampling through simulation studies. The comparison is performed in terms of computational time per effective sample, as the number of groups $d$ or the number of observations per group $n_i$ increase. In this section, we conduct a further simulation study fixing both the number of groups $d = 10$ and the number of observations per group $n_{i} = 25$, while allowing the number of distinct values $k$ to vary. To enhance the variability of $k$ across simulated datasets, observations are sampled from a hierarchical Dirichlet process with random concentration parameters $\alpha \sim \text{Gamma}(5)$ and $\alpha_0 \sim \text{Gamma}(3)$. Consistently with Section~\ref{sec:simulations}, results are averaged over $100$ simulated datasets for each value of $k$, and execution times are plotted whenever at least $75$ experiments are completed without errors. Remarkably, numerical overflows in the computation of the multivariate Stirling numbers appear more frequently for smaller values of $k$; indeed, since the number of observations $n = 250$ is fixed, a smaller $k$ entails a larger number of observations for each distinct value. As displayed in Figure~\ref{fig:times_k}, the execution time for the MCMC algorithm grows linearly with $k$: this is in line with the fact that the state of the Markov chain has dimension $2k+1$. The exact sampler and collapsed Gibbs sampler also display a nearly linear dependence on the number of distinct values, with a smaller slope compared with the MCMC approach. In contrast, the execution time for the CRF-based Gibbs sampler (not shown) is essentially constant. 

\begin{figure}
	\centering
	\includegraphics[width=0.5\textwidth]{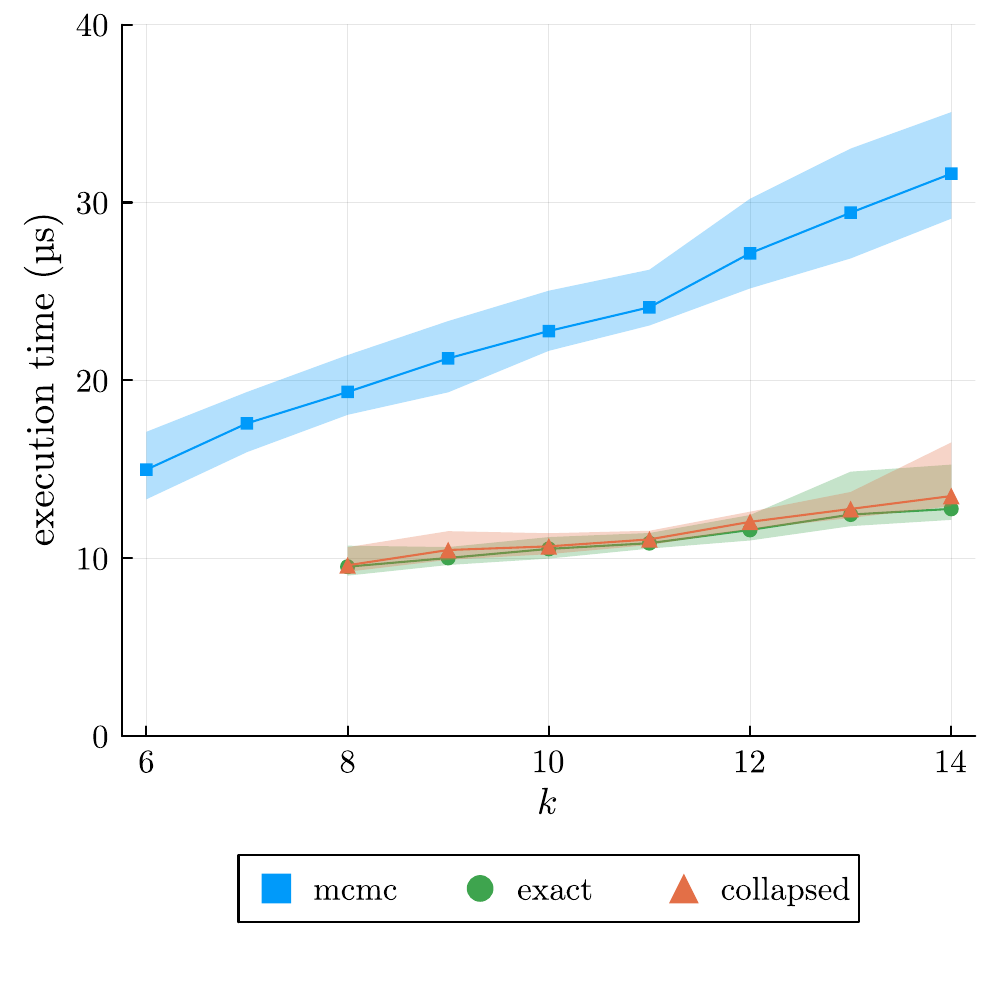}
	\captionsetup{width=0.9\textwidth,font=small}
	\caption{Execution times per effective sample for the different algorithms, with increasing number of distinct values $k$. The number of groups $d = 10$ and observations per group $n_{i} = 25$ are fixed. Results are averaged over  $100$ simulated datasets per experimental setting. Solid curves represent median values, with shaded areas between the first and third quartiles. Times for the CRF-based sampler are around $120 {\rm \mu s}$.}
	\label{fig:times_k}
\end{figure}


\section{Eliciting the dependence structure}
\label{sec:elicitation}

Normalized hierarchical CRVs induce dependence between the marginal random probabilities, which in turn regulates the borrowing of information across different groups. Hierarchical models are a natural way to induce positive dependence, especially in a Bayesian setting. However, as highlighted in \cite{Sankhya2024}, the elicitation of the dependence is more difficult with respect to other proposals because model parameters typically affect both the marginal distribution and the dependence across groups. For this reason, \cite{Sankhya2024} propose two kinds of (weak) flexibility for partially exchangeable models: (i) for every $\rho \in [0,1]$, there exists a specification of the parameters such that $\corr(\tilde P_i(A), \tilde P_j(A)) = \rho$, where $A$ is a fixed set s.t. $E(\tilde P_i(A)) \neq 0,1$; (ii) for every $\rho \in [0,1]$ and for every fixed value of the marginal mean $E(\tilde P_i(A))$ and variance $\var(\tilde P_i(A))$, there exists a specification of the parameters s.t. $\corr(\tilde P_i(A), \tilde P_j(A)) = \rho$. The expressions in Example~\ref{ex:moments_gamma} show that the normalized gamma-gamma hCRV achieves the flexibility of second kind. In this section, we investigate the role of parameter values in the borrowing of information, for fixed values of the marginal mean and variance.

Let $\bm{\crm}$ be a gamma-gamma hCRV as in Example~\ref{ex:gamma-gamma} and let $A$ be a Borel set s.t. $P_0(A) \neq 0,1$. From Example~\ref{ex:moments_gamma}, it is apparent that correlation is not affected by the mean $E(\tilde P_i(A)) = P_0(A)$ but is deeply related to the variance. Indeed, one has
\begin{align*}
	\corr ( \tilde P_i(A),  \tilde P_j(A)) & = \left(1 + \frac{\alpha_0}{\alpha} e^{1/\alpha} E_{\alpha_0}\big(1/\alpha\big)\right)^{-1}, \\
	\var ( \tilde P_i(A) ) & = \left(1 + \frac{\alpha_0}{\alpha} e^{1/\alpha} E_{\alpha_0}\big(1/\alpha \big)\right) \frac{ P_0(A)(1-P_0(A))}{1+\alpha_0}.
\end{align*}
We refer to Table~\ref{table:limits} for some limiting behaviours, which are 
compared with those of the HDP. 
Since $\alpha$ and $\alpha_0$ impact both the variance and the dependence structure, it may be difficult to elicit them in practice. From the point of view of the practitioner, it is certainly more intuitive to elicit mean, variance and correlation independently. Hence, one can choose $\sigma^2, \rho \in (0,1)$ such that $\var(\tilde P_i(A)) = \sigma^2 \, P_0(A)(1-P_0(A))$ and $ \corr(\tilde P_i(A), \tilde P_j(A)) = \rho$, and then find the corresponding values of $\alpha$ and $\alpha_0$ by solving the system of non-linear equations in Section~\ref{sec:comparison}, namely
\begin{equation*}
	\rho(1+\alpha_0/\alpha \, e^{1/\alpha} E_{\alpha_0}(1/\alpha)) - 1 = 0, \qquad \sigma^2 (1 + \alpha_0) - 1/\rho = 0,
\end{equation*}
Figure~\ref{fig:elicitation} displays the values of $\alpha,\alpha_0$ that correspond to a range of values for $\sigma^2$ and $\rho$, along with the corresponding analysis for the HDP.

\begin{table}
	\centering
	\begin{tabular}{ ccccc } 
		parameters \quad & \multicolumn{2}{c}{ fixed $\alpha>0$} & \multicolumn{2}{c}{fixed $\alpha_0>0$} \\ 
		& $\alpha_0 \to 0$ & $\alpha_0 \to +\infty$ & $\alpha \to 0$ & $\alpha \to +\infty$ \\ 
		\hline 
		$\sigma^2 = \sigma^2(\alpha, \alpha_0)$ \quad & $1$  & $0$   & $1$  & $1$ \\
		$\rho = \rho(\alpha, \alpha_0)$ \quad & $1$   & $\alpha/(1+\alpha)$   & $1/(1+\alpha_0)$  & $1$ \\
		\hline
	\end{tabular}
	
	\vspace{.5em}
	\begin{tabular}{ ccccc } 
		parameters \quad & \multicolumn{2}{c}{fixed $\alpha>0$} & \multicolumn{2}{c}{fixed $\alpha_0>0$} \\ 
		& $\alpha_0 \to 0$ & $\alpha_0 \to +\infty$ & $\alpha \to 0$ & $\alpha \to +\infty$ \\ 
		\hline
		$\sigma^2 = \sigma^2(\alpha,\alpha_0)$ \quad & $1$  & $1/(1+\alpha)$   & $1$  & $1/(1+\alpha_0)$ \\
		$\rho = \rho(\alpha,\alpha_0)$ \quad & $1$ & $0$   & $1/(1+\alpha_0)$  & $1$ \\
		\hline
	\end{tabular}
	
	\captionsetup{width=0.9\textwidth,font=small}
	\caption{\small Limiting behaviours of variance and correlation parameters for the normalized gamma-gamma hCRV (top) and HDP (bottom), as functions of their respective parameters $\alpha,\alpha_0>0$, with $\var(\tilde P_i(A)) = \sigma^2 P_0(A)(1 - P_0(A))$ and $\corr(\tilde P_i(A), \tilde P_j(A)) = \rho$.}
	\label{table:limits}
\end{table}


\begin{figure}
	\centering
	\includegraphics[width=\linewidth]{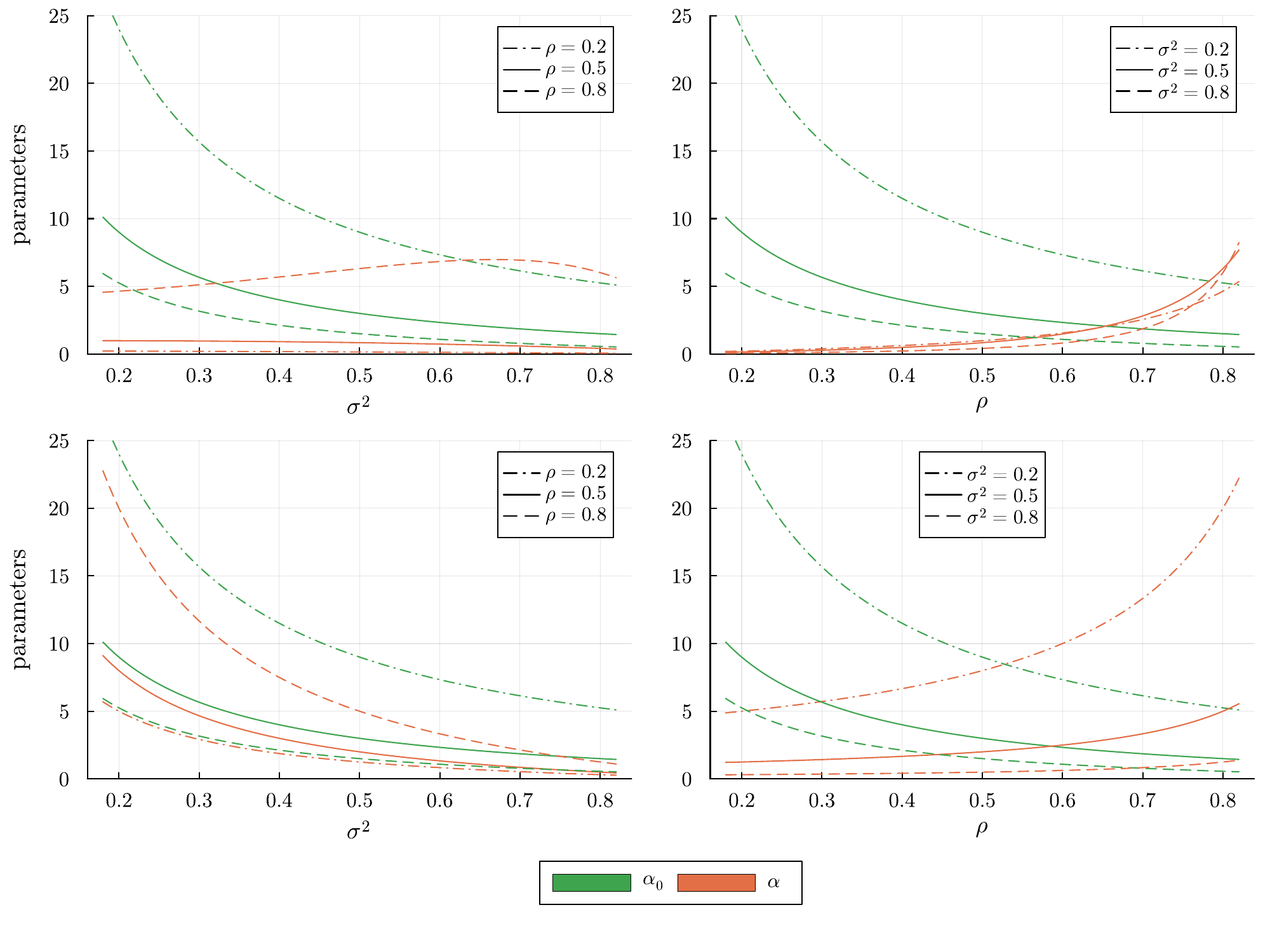}
	\captionsetup{width=0.91\textwidth,font=small}
	\caption{Values of parameters $\alpha$ and $\alpha_0$ for the normalized gamma-gamma hCRV (top) and HDP (bottom) corresponding to fixed values of variance $\sigma^2$ and correlation $\rho$.}
	\label{fig:elicitation}
\end{figure}

To visualize the effect of the borrowing, we consider the same simulated dataset as in Section~\ref{sec:comparison}, namely $d = 3$ groups of independent Poisson observations, each of size $n_{i}=10$, with means equal to $2$, $3$ and $4$.  Figure~\ref{fig:borrowing} displays the expected values of the posterior random means $E \big( \int x \,  \ddr \tilde P_i(x) \bigm\vert \bm{X}\big)$, which coincide with the means of the predictive distributions for the three groups, as we vary the prior pairwise correlation coefficient $\rho \in (0,1)$ and the prior variance through $\sigma^2\in (0,1)$, keeping the base measure $P_0 = N(10,1)$ fixed. Posterior samples are obtained using the exact algorithm. For a fixed variance $\sigma^2$, a higher correlation $\rho$ induces more borrowing, and thus the posterior means are closer to each other. Depending on $\sigma^2$, the estimates are closer to the prior (which is pushing them towards higher values, being centered at $10$), or to their empirical means. 
On the other hand, for a fixed correlation $\rho$, a higher variance $\sigma^2$ reduces the weight of the prior, and thus pushes the estimates towards their empirical means. Interestingly, lower values of $\rho$ can induce estimates that are closer to each other than higher values of $\rho$, depending on the value of $\sigma^2$. Indeed, low values of $\sigma^2$ will force estimates to be closer to the prior, and thus also closer to each other; e.g., compare $\rho = \sigma^2 = 0.1$ vs.~$\rho = 0.5$ and $\sigma^2 = 0.9$. This is the effect of the shrinkage, which can sometimes be difficult to distinguish from the borrowing of information, especially if the prior mean is close to the grand mean of the observations.

\begin{figure}
	\centering
	\includegraphics[width=\linewidth]{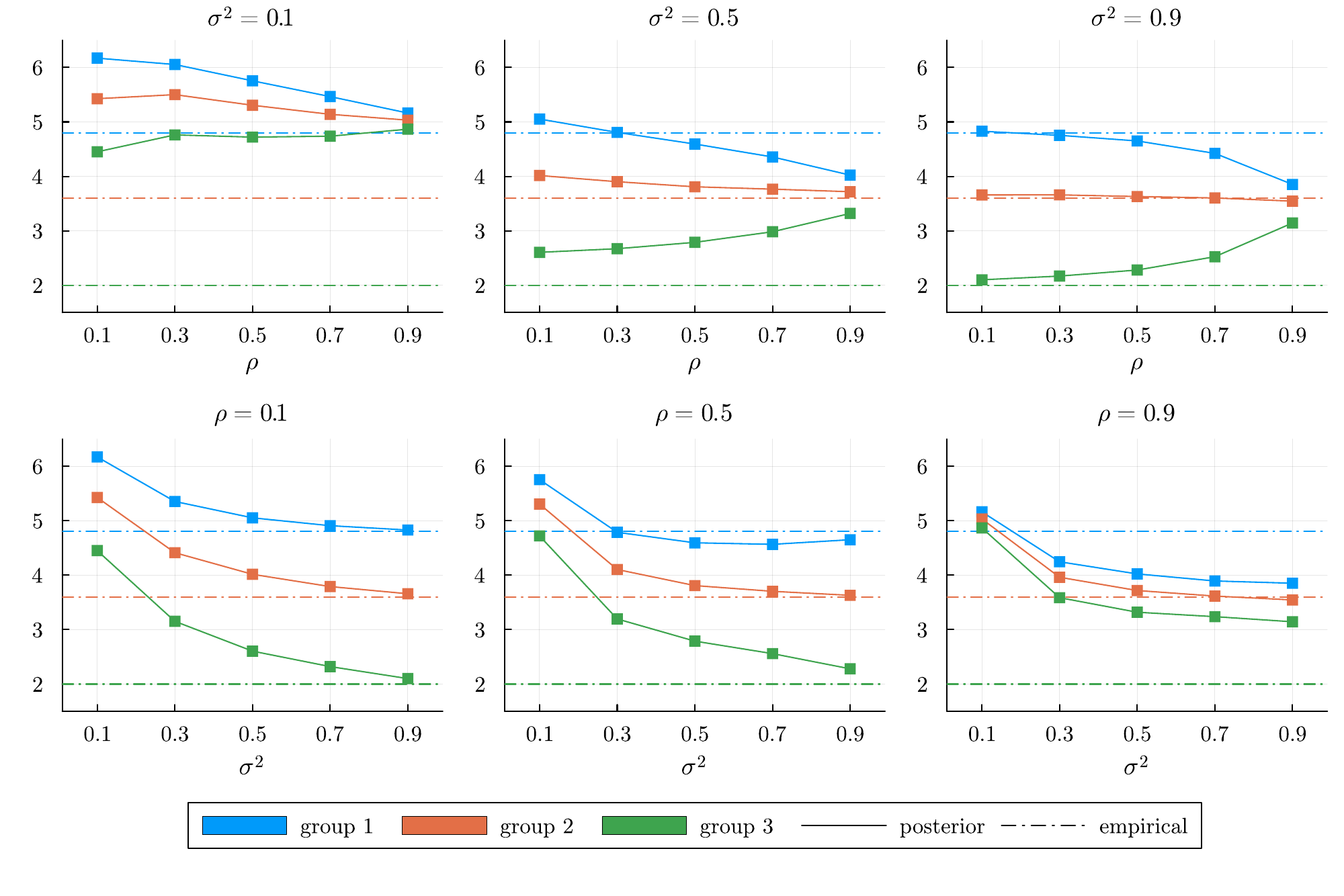}
	\captionsetup{width=0.9\textwidth,font=small}
	\caption{Posterior expected random means of the normalized gamma-gamma hCRV model, for three groups of independent Poisson observations with means equal to $2$, $3$ and $4$, each of size $n_{i}=10$, and prior mean $P_0 = N(10,1)$. Top: fixed variance and increasing correlation. Bottom: fixed correlation and increasing variance.} 
	\label{fig:borrowing}
\end{figure}

\end{document}